\newtheorem{theorem}{Theorem}
\newtheorem{proposition}[theorem]{Proposition}
\newtheorem{lemma}[theorem]{Lemma}
\newtheorem{corollary}[theorem]{Corollary}
\theoremstyle{definition}
\newtheorem{definition}[theorem]{Definition}
\newtheorem{remark}[theorem]{Remark}
\def\inc{\operatorname{inc}}
\def\Proj{\operatorname{Proj}}
\def\can{\operatorname{can}}
\def\K{\mathbf{K}}
\def\sing{\operatorname{Sg}}
\def\Hom{\operatorname{Hom}}
\def\Ker{\operatorname{Ker}}
\def\Coker{\operatorname{Coker}}
\def\Spec{\operatorname{Spec}}
\def\straightK{\operatorname{K}}
\def\straightC{\operatorname{C}}
\def\Add{\operatorname{Add}}
\def\Flat{\operatorname{Flat}}
\def\Proj{\operatorname{Proj}}
\def\pac{\operatorname{pac}}
\begin{document}

% Commands
\def\dbsing{\mathbf{D}_{\sing}}
\def\dbsinge{\mathbf{D}'_{\sing}}
\newcommand{\koszul}[2]{\operatorname{E}[#1]}
\newcommand{\ukoszul}[2]{\straightK[#1]}
\newcommand{\cech}[2]{\check{\straightC}[#1]}
\newcommand{\cat}[1]{\mathcal{#1}}
\newcommand{\qder}[1]{\mathbf{D}(#1)}
\newcommand{\qderl}[2]{\mathbf{D}_{#1}(#2)}
\newcommand{\qderu}[2]{\mathbf{D}^{#1}(#2)}
\newcommand{\lto}{\longrightarrow}
\newcommand{\rto}{\longleftarrow}
\newcommand{\xlto}[1]{\stackrel{#1}\lto}
\newcommand{\xrto}[1]{\stackrel{#1}\rto}
\newcommand{\mf}[1]{\mathfrak{#1}}
\newcommand{\add}[1]{\Add_\alpha(#1^{\textrm{op}}, \Ab)}
\newcommand{\md}[1]{\mathscr{#1}}
\newcommand{\kprof}[1]{\mathbf{N}(\Flat #1)}
\newcommand{\kflat}[1]{\K(\Flat #1)}
\newcommand{\kflatl}[2]{\K_{#1}(\Flat #2)}
\newcommand{\kproj}[1]{\K(\Proj #1)}
\newcommand{\kproju}[2]{\K^{#1}(\Proj #2)}
\newcommand{\kprofl}[2]{\mathbf{N}_{#1}(\Flat #2)}
\def\l{\,|\,}
\def\rdev{\mathbb{R}}
\def\rdevhom{\mathbb{R}\!\Hom}
\def\lotimes{\otimes^{\mathbb{L}}}
\def\libc{\bold{LC}}
\def\libk{\bold{LK}}
\def\p{\bold{p}}
\def\Ab{\cat{A}b}
\def\cand{\cat{T}^{|\alpha|}}

\title{Rouquier's Cocovering Theorem and Well-generated Triangulated Categories}
\author{Daniel Murfet}
\email{murfet@math.uni-bonn.de}
\address{Hausdorff Center for Mathematics, University of Bonn}

\begin{abstract} We study cocoverings of triangulated categories, in the sense of Rouquier, and prove that for any regular cardinal $\alpha$ the condition of $\alpha$-compactness, in the sense of Neeman, is local with respect to such cocoverings. This was established for ordinary compactness by Rouquier. Our result yields a new technique for proving that a given triangulated category is well-generated. As an application we describe the $\alpha$-compact objects in the unbounded derived category of a quasi-compact semi-separated scheme.
\end{abstract}

\maketitle

\section{Introduction}

Let $\cat{T}$ be a triangulated category with coproducts, and recall that an object $Y$ of $\cat{T}$ is \emph{compact} if the functor $\cat{T}(Y,-)$ commutes with coproducts. When $\cat{T} = \qder{X}$ is the unbounded derived category of quasi-coherent sheaves on a reasonable scheme $X$, the condition of compactness in $\cat{T}$ is local: given an open cover $\{ U_1,\ldots,U_n \}$ of $X$, an object $\md{F}$ is compact in $\qder{X}$ if and only if $\md{F}|_{U_i}$ is compact in $\qder{U_i}$ for $1 \le i \le n$. For arbitrary $\cat{T}$, Rouquier introduces in \cite[\S 5]{Rouquier08} a suitable generalisation: he defines a \emph{cocovering} of $\cat{T}$ to be a special family of Bousfield subcategories $\cat{F} = \{ \cat{I}_1,\ldots, \cat{I}_n \}$ (the precise definition is recalled below). The analogue of restriction to $U_i$ is then passage to the quotient $\cat{T} \lto \cat{T}/\cat{I}_i$, and under some natural hypotheses on $\cat{F}$, compactness in $\cat{T}$ is local: an object $Y$ is compact in $\cat{T}$ if and only if the image of $Y$ is compact in $\cat{T}/\cat{I}_i$ for $1 \le i \le n$.

This article concerns the large cardinal generalisation. Let $\alpha$ be a regular cardinal, that is, $\alpha$ is an infinite cardinal which is not the sum of fewer than $\alpha$ cardinals, all smaller than $\alpha$. In his book \cite{NeemanBook} Neeman associates to $\alpha$ a class $\cat{T}^\alpha \subseteq \cat{T}$ of \emph{$\alpha$-compact} objects. The definition is not so easily stated, but in typical examples, say the homotopy category of spectra or the derived category of an associative ring, the condition of $\alpha$-compactness is very natural; see Section \ref{section:derived_cat_rings}. In particular, the $\aleph_0$-compact objects are precisely the compact objects. Our main theorem says, among other things, that the condition of $\alpha$-compactness is local: given a cocovering $\cat{F}$ of $\cat{T}$ as above, satisfying some natural hypotheses, an object $Y$ is $\alpha$-compact in $\cat{T}$ if and only if the image of $Y$ is $\alpha$-compact in $\cat{T}/\cat{I}_i$ for $1 \le i \le n$.\\

In order to give the precise statements, we need some notation: recall that a \emph{localising subcategory} $\cat{S}$ of $\cat{T}$ is a triangulated subcategory closed under small coproducts, and $\cat{S}$ is \emph{Bousfield} if the inclusion $\cat{S} \lto \cat{T}$ has a right adjoint. Given a class $S$ of objects in $\cat{T}$, we write $\langle S \rangle$ for the smallest localising subcategory of $\cat{T}$ containing $S$. Let $\alpha$ be a regular cardinal. If $\cat{T}^\alpha$ is essentially small and $\langle \cat{T}^\alpha \rangle = \cat{T}$, then $\cat{T}$ is said to be \emph{$\alpha$-compactly generated}, and $\cat{T}$ is called \emph{well-generated} if it is $\alpha$-compactly generated for some regular cardinal $\alpha$. If $\cat{T}$ is $\alpha$-compactly generated, then a localising subcategory $\cat{S} \subseteq \cat{T}$ is \emph{$\alpha$-compactly generated in $\cat{T}$} if there is a set $S \subseteq \cat{T}^\alpha$ such that $\cat{S} = \langle S \rangle$. In this case $\cat{S}$ is $\alpha$-compactly generated, and $\cat{S}^\alpha = \cat{S} \cap \cat{T}^\alpha$ (see Theorem \ref{theorem:thomason_neeman}).

Two Bousfield subcategories $\cat{I}_1, \cat{I}_2$ of $\cat{T}$ are said to \emph{intersect properly} if, for every pair $I \in \cat{I}_1$ and $J \in \cat{I}_2$, any morphism $I \lto J$ or $J \lto I$ factors through an object of $\cat{I}_1 \cap \cat{I}_2$ \cite[(5.2.3)]{Rouquier08}. Finally, a \emph{cocovering} of $\cat{T}$ is a finite family of Bousfield subcategories $\cat{F} = \{ \cat{I}_1,\ldots, \cat{I}_n \}$ of $\cat{T}$ which are pairwise properly intersecting, such that $\bigcap_{i=1}^n \cat{I}_i = 0$; see \cite[(5.3.3)]{Rouquier08}. The $\alpha = \aleph_0$ case of the following theorem is the aforementioned result of Rouquier, namely \cite[Theorem 5.15]{Rouquier08}. %The essential image of a functor $F: \cat{A} \lto \cat{A}'$ is the full subcategory of $\cat{A}'$ consisting of objects isomorphic to $F(X)$ for some $X \in \cat{A}$.

\begin{theorem}\label{theorem:main_theorem_general} Let $\cat{T}$ be a triangulated category with coproducts and $\alpha$ a regular cardinal. Suppose that $\cat{F} = \{ \cat{I}_1,\ldots, \cat{I}_n \}$ is a cocovering of $\cat{T}$ with the following properties:
\begin{itemize}
\item[(1)] $\cat{T}/\cat{I}$ is $\alpha$-compactly generated for every $\cat{I} \in \cat{F}$.
\item[(2)] For every $\cat{I} \in \cat{F}$ and nonempty subset $\cat{F}' \subseteq \cat{F} \setminus \{ \cat{I} \}$ the essential image of the composite
\[
\bigcap_{\cat{I}' \in \cat{F}'} \cat{I}' \xlto{\inc} \cat{T} \xlto{\can} \cat{T}/\cat{I}
\]
is $\alpha$-compactly generated in $\cat{T}/\cat{I}$.
\end{itemize}
Then $\cat{T}$ is $\alpha$-compactly generated, and an object $X \in \cat{T}$ is $\alpha$-compact if and only if the image of $X$ is $\alpha$-compact in $\cat{T}/\cat{I}$ for every $\cat{I} \in \cat{F}$. Let $\cat{S}$ be a Bousfield subcategory of $\cat{T}$ intersecting properly with each $\cat{I} \in \cat{F}$, such that:
\begin{itemize}
\item[(3)] $\cat{S}/(\cat{S} \cap \cat{I})$ is $\alpha$-compactly generated in $\cat{T}/\cat{I}$ for every $\cat{I} \in \cat{F}$.
\item[(4)] For every $\cat{I} \in \cat{F}$ and nonempty subset $\cat{F}' \subseteq \cat{F} \setminus \{ \cat{I} \}$ the essential image of the composite
\[
\cat{S} \cap \bigcap_{\cat{I}' \in \cat{F}'} \cat{I}' \xlto{\inc} \cat{T} \xlto{\can} \cat{T}/\cat{I}
\]
is $\alpha$-compactly generated in $\cat{T}/\cat{I}$.
\end{itemize}
Then $\cat{S}$ is $\alpha$-compactly generated in $\cat{T}$.
\end{theorem}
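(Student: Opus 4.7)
The plan is to adapt Rouquier's proof of the $\alpha = \aleph_0$ case (\cite[Theorem 5.15]{Rouquier08}) to Neeman's setting of $\alpha$-compact objects, arguing by induction on $n = |\cat{F}|$. The base case $n = 1$ forces $\cat{I}_1 = 0$, so $\cat{T} \cong \cat{T}/\cat{I}_1$ and hypothesis (1) gives the result. For the inductive step I would first produce a small set of $\alpha$-compact generators of $\cat{T}$. The candidates arise by lifting: for each $\cat{I} \in \cat{F}$ and nonempty $\cat{F}' \subseteq \cat{F} \setminus \{\cat{I}\}$, hypothesis (2) supplies a set of $\alpha$-compact generators in $\cat{T}/\cat{I}$ for the essential image of $\bigcap_{\cat{I}' \in \cat{F}'} \cat{I}'$, and the right adjoint to the inclusion $\bigcap_{\cat{I}' \in \cat{F}'} \cat{I}' \hookrightarrow \cat{T}$ lets one lift these to objects living inside that intersection. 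The essential structural tool is the \v{C}ech-type homotopy colimit decomposition of an arbitrary object of $\cat{T}$ in terms of the intersections $\bigcap_{\cat{I} \in \cat{F}'} \cat{I}$ for nonempty $\cat{F}' \subseteq \cat{F}$; this decomposition is built from proper intersection and $\bigcap_i \cat{I}_i = 0$ exactly as in \cite[\S 5.3]{Rouquier08}, with no dependence on $\alpha$.

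The heart of the proof is showing simultaneously that these lifted generators are genuinely $\alpha$-compact in $\cat{T}$ and that an object $X$ is $\alpha$-compact in $\cat{T}$ if and only if each image in $\cat{T}/\cat{I}$ is so. I would let $\cat{U} \subseteq \cat{T}$ denote the subcategory of objects with $\alpha$-compact image in every $\cat{T}/\cat{I}$; routine arguments show $\cat{U}$ is thick and closed under $\alpha$-small coproducts, and combined with hypothesis (2) the \v{C}ech decomposition exhibits $\cat{T}$ as generated by $\cat{U}$. The nontrivial content is the reverse inclusion $\cat{U} \subseteq \cat{T}^\alpha$: given an $\alpha$-test system of maps out of an object of $\cat{U}$ into a coproduct, one projects to each $\cat{T}/\cat{I}$, obtains a local factorisation through an $\alpha$-small subcoproduct from hypothesis (1), and then reassembles via the \v{C}ech gluing to produce a global $\alpha$-small factorisation. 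This reassembly, where the cocovering combinatorics meets the cardinal arithmetic of $\alpha$, is the step I expect to be the main obstacle; making it work requires a careful inductive use of hypothesis (2) and probably a separate technical lemma about preservation of $\alpha$-compactness by the quotient functors $\cat{T} \to \cat{T}/\cat{I}$.

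For the final claim about $\cat{S}$, apply the already-established part of the theorem to $\cat{S}$ equipped with the family $\{\cat{S} \cap \cat{I} : \cat{I} \in \cat{F}\}$. Proper intersection of $\cat{S}$ with each $\cat{I}$ propagates to proper intersection among members of this family, and $\bigcap_{\cat{I} \in \cat{F}}(\cat{S} \cap \cat{I}) = \cat{S} \cap 0 = 0$, so this is indeed a cocovering of $\cat{S}$. Hypotheses (3) and (4) are precisely hypotheses (1) and (2) for this new cocovering, so $\cat{S}$ is $\alpha$-compactly generated as a standalone triangulated category. To upgrade this to $\alpha$-compact generation \emph{inside} $\cat{T}$, the local criterion from the main part reduces the task to verifying that the constructed generators of $\cat{S}$ have $\alpha$-compact image in every $\cat{T}/\cat{I}$, which is built into their construction via the lifts coming from hypothesis (4).
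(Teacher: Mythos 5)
Your sketch adapts Rouquier's $\alpha=\aleph_0$ argument — lift generators from the quotients, take unions, \v{C}ech-style gluing, and at the key step produce an $\alpha$-small subcoproduct factorisation by projecting to each $\cat{T}/\cat{I}$ and reassembling. The problem is that this only addresses $\alpha$\emph{-smallness}, and for $\alpha>\aleph_0$ that is strictly weaker than $\alpha$-compactness. In Neeman's framework $\cat{T}^\alpha$ is the maximal $\alpha$-perfect class of $\alpha$-small objects, and the $\alpha$-perfection condition — roughly, that factorisations through subcoproducts can be chosen coherently across families of morphisms — is vacuous when $\alpha=\aleph_0$ but becomes the real obstruction once $\alpha>\aleph_0$. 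Showing that an object of your $\cat{U}$ is $\alpha$-small does not put it in $\cat{T}^\alpha$; you would additionally need $\cat{U}$ itself to be an $\alpha$-perfect class, and your ``$\alpha$-test system'' argument says nothing about that. The paper's author is explicit on this point: a direct adaptation of Rouquier's strategy is abandoned precisely because ``the perfection condition (which is trivial for $\alpha=\aleph_0$)'' is hard to manage that way.

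The paper instead (for the genuine base case $n=2$, not $n=1$) introduces the subcategory $\cat{T}^{|\alpha|} = \{X : j_a^*X \in (\cat{T}/\cat{I}_a)^\alpha,\, a=1,2\}$ and verifies perfection via Neeman's reformulation: a class $\cat{S}$ is $\alpha$-perfect and $\alpha$-small iff the induced functor $\pi: A(\cat{T}) \lto \Add_\alpha(\cat{S}^{\mathrm{op}},\Ab)$ from Freyd's abelianisation preserves coproducts. The two orthogonal Bousfield subcategories produce a pair of Gabriel localisations of $\Add_\alpha(\cat{S}^{\mathrm{op}},\Ab)$ whose kernels have trivial intersection, and commutativity of a square relating $\pi$ to the corresponding functors for the quotients reduces the coproduct-preservation question to the quotients, where it holds by hypothesis. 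The general $n$ is then handled by induction, passing through $\cat{I}_\cap = \cat{I}_2\cap\cdots\cap\cat{I}_n$ and the $n=2$ case, rather than a global \v{C}ech reassembly. If you want your outline to become a proof, the missing ingredient is some replacement for this abelian-category localisation argument (or another device) that controls perfection; without it, the ``reassembly'' step you flag as the main obstacle cannot be completed.
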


To return to the geometric example: if $\cat{T} = \qder{X}$ and we are given an open cover as above, then for each $1 \le i \le n$ denote by $\cat{I}_i = \qderl{X \setminus U_i}{X}$ the full subcategory of $\qder{X}$ consisting of complexes with cohomology supported on $X \setminus U_i$. There is a canonical equivalence $\cat{T}/\cat{I}_i \cong \qder{U_i}$, the quotient functor $\cat{T} \lto \cat{T}/\cat{I}_i$ corresponds to restriction, and the family $\cat{F} = \{ \cat{I}_1,\ldots,\cat{I}_n \}$ is a cocovering of $\qder{X}$ satisfying the hypotheses $(1),(2)$ of the theorem for $\alpha = \aleph_0$ \cite[\S 6.2]{Rouquier08}. For this choice of $\cat{T}$ and $\cat{F}$ the hypotheses are very natural, and easily verified; for the full elaboration, see Section \ref{example:compacts_derived}.

 Applying the theorem (recall that, since $\alpha = \aleph_0$, this is just Rouquier's \cite[Theorem 5.15]{Rouquier08}) one obtains a proof of the fact, due originally to Neeman \cite{Neeman96}, that the compact objects in $\qder{X}$ are precisely the perfect complexes; see \cite[Theorem 6.8]{Rouquier08}. Using the $\alpha > \aleph_0$ case of the theorem we obtain in Section \ref{example:compacts_derived} a description of the $\alpha$-compact objects in $\qder{X}$.

We have another application in mind, which will appear in the forthcoming \cite{MurfetPure}. Let $A$ be an associative ring with identity, $\kproj{A}$ and $\kflat{A}$ the homotopy categories of projective and flat left $A$-modules, respectively. A complex of left $A$-modules $F$ is \emph{pure acyclic} if it is acyclic, and $N \otimes_A F$ is acyclic for every right $A$-module $N$. Let $\kflatl{\pac}{A}$ denote the full subcategory of pure acyclic complexes in $\kflat{A}$. This is a triangulated subcategory, and Neeman proves in \cite{Neeman08} that the composite
\begin{equation}\label{eq:neeman_equivalence}
\kproj{A} \xlto{\inc} \kflat{A} \xlto{\can} \kflat{A}/\kflatl{\pac}{A}
\end{equation}
is an equivalence. Now let $X$ be a quasi-compact semi-separated scheme. Unless $X$ is affine, projective quasi-coherent sheaves on $X$ are rare, and the homotopy category of projective quasi-coherent sheaves on $X$ is often the zero category. In this case, the equivalence (\ref{eq:neeman_equivalence}) suggests a suitable replacement. Let $\kflat{X}$ be the homotopy category of flat quasi-coherent sheaves on $X$, and denote by $\kflatl{\pac}{X}$ the full subcategory of acyclic complexes $\md{F}$ with the property that $\md{F} \otimes_{\cat{O}_X} \md{A}$ is acyclic for every quasi-coherent sheaf $\md{A}$. Define
\[
\kprof{X} := \kflat{X}/\kflatl{\pac}{X},
\]
and let $\{ U_1, \ldots, U_n \}$ be an affine open cover of $X$, with say $U_i \cong \Spec(A_i)$ for $1 \le i \le n$. We show in \cite{MurfetPure} that there is a cocovering of $\kprof{X}$ by Bousfield subcategories $\{ \kprofl{X \setminus U_i}{X} \}_{1 \le i \le n}$, where $\kprofl{X \setminus U_i}{X}$ is the kernel of a natural restriction functor $\kprof{X} \lto \kprof{U_i}$. Moreover, there are canonical equivalences
\[
\kprof{X}/\kprofl{X \setminus U_i}{X} \cong \kprof{U_i} \cong \kproj{A_i}.
\]
Neeman proves in \emph{loc.cit.}\ that $\kproj{A_i}$ is $\aleph_1$-compactly generated, and even compactly generated when $A_i$ is coherent.  In \cite{MurfetPure} we combine Neeman's results with Theorem \ref{theorem:main_theorem_general} to see that the global category $\kprof{X}$ is $\aleph_1$-compactly generated, and compactly generated when $X$ is noetherian.

The proof of Theorem \ref{theorem:main_theorem_general} is by induction on the size $n = |\cat{F}|$ of the cocovering. The real content is in the initial step of the induction, which we separate into its own section. The proof of the theorem is completed in Section \ref{section:proof}. Our basic reference for triangulated categories is \cite{NeemanBook}, whose notation we follow with one exception: given a class $\cat{C}$ of objects in $\cat{T}$, we write
\begin{align*}
\cat{C}^{\perp} &= \{ Y \in \cat{T} \l \Hom_{\cat{T}}(\Sigma^n X, Y) = 0 \text{ for all } X \in \cat{C} \text{ and } n \in \mathbb{Z} \},\\
{}^{\perp} \cat{C} &= \{ X \in \cat{T} \l \Hom_{\cat{T}}(X, \Sigma^n Y) = 0 \text{ for all } Y \in \cat{C} \text{ and } n \in \mathbb{Z} \}
\end{align*}
for the orthogonals, which are triangulated subcategories of $\cat{T}$. For further information on the theory of well-generated triangulated categories, the reader is referred to \cite{Neeman05,Krause08}. In this article, all triangulated categories have ``small Homs''.
\\

\emph{Acknowledgements.} I would like to thank Amnon Neeman for suggesting improvements to an earlier version and communicating the proof of Theorem \ref{theorem:neeman_compact_cohomology}, and Henning Krause for helpful discussion on the subject of this paper.

\section{Initial step of the induction}

Throughout, $\cat{T}$ is a triangulated category with coproducts. Two Bousfield subcategories $\cat{I}_1,\cat{I}_2$ of $\cat{T}$ are \emph{orthogonal} if $\cat{I}_1 \subseteq \cat{I}_2^{\perp}$ and $\cat{I}_2 \subseteq \cat{I}_1^{\perp}$. In this situation the composite $\cat{I}_a \lto \cat{T} \lto \cat{T}/\cat{I}_b$ is fully faithful for $\{ a, b \} = \{1,2\}$ and $\cat{I}_a$ may be identified with a Bousfield subcategory of $\cat{T}/\cat{I}_b$. Let us state the $n = 2$ case of the Theorem \ref{theorem:main_theorem_general} as a proposition:

\begin{proposition}\label{prop:main_theorem_1} Let $\cat{I}_1, \cat{I}_2$ be orthogonal Bousfield subcategories of $\cat{T}$, and suppose that for some regular cardinal $\alpha$, we have:
\begin{itemize}
\item[(1)] $\cat{T}/\cat{I}_a$ is $\alpha$-compactly generated for $a \in \{ 1,2 \}$,
\item[(2)] $\cat{I}_a$ is $\alpha$-compactly generated in $\cat{T}/\cat{I}_b$ for $\{a,b\} = \{1,2\}$.
\end{itemize}
Then $\cat{T}$ is $\alpha$-compactly generated, and an object $X \in \cat{T}$ is $\alpha$-compact if and only if the image of $X$ is $\alpha$-compact in both $\cat{T}/\cat{I}_1$ and $\cat{T}/\cat{I}_2$. Let $\cat{S}$ be a Bousfield subcategory of $\cat{T}$ intersecting properly with $\cat{I}_1$ and $\cat{I}_2$, and suppose that:
\begin{itemize}
\item[(3)] $\cat{S}/(\cat{S} \cap \cat{I}_a)$ is $\alpha$-compactly generated in $\cat{T}/\cat{I}_a$ for $a \in \{1,2\}$,
\item[(4)] $\cat{S} \cap \cat{I}_a$ is $\alpha$-compactly generated in $\cat{T}/\cat{I}_b$ for $\{a,b\} = \{ 1,2 \}$.
\end{itemize}
Then $\cat{S}$ is $\alpha$-compactly generated in $\cat{T}$.
\end{proposition}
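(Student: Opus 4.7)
The plan is as follows. Orthogonality of $\cat{I}_1,\cat{I}_2$ means each $\cat{I}_a$ embeds fully faithfully as a Bousfield subcategory of $\cat{T}/\cat{I}_b$ (for $\{a,b\}=\{1,2\}$), and every $X \in \cat{T}$ admits an iterated Bousfield decomposition into objects of $\cat{I}_1$, $\cat{I}_2$, and $\cat{I}_1^{\perp} \cap \cat{I}_2^{\perp}$. Write $q_a : \cat{T} \to \cat{T}/\cat{I}_a$ for the quotient, $r_a$ for its right adjoint, and fix generating sets $\cat{G}_a \subseteq (\cat{T}/\cat{I}_a)^{\alpha}$ from (1) and $\cat{H}_a \subseteq \cat{I}_a \cap \cat{I}_b^{\perp}$ generating $\cat{I}_a$ in $\cat{T}/\cat{I}_b$ from (2). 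Since $H \in \cat{I}_b^{\perp}$ gives $\Hom_{\cat{T}}(H,W) \cong \Hom_{\cat{T}/\cat{I}_b}(H, q_b W)$ and $q_b$ preserves coproducts, every $H \in \cat{H}_a$ is $\alpha$-compact in $\cat{T}$.

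The technical heart of the proof is to construct, for each $g \in \cat{G}_a$, an $\alpha$-compact object $\tilde{g} \in \cat{T}^{\alpha}$ with $q_a(\tilde{g}) \cong g$. The crude candidate $\tilde{g}_0 := r_a(g) \in \cat{I}_a^{\perp}$ lies in the right Bousfield class but is typically not $\alpha$-compact in $\cat{T}$; the failure localizes to $q_b(\tilde{g}_0)$ via the Bousfield decomposition with respect to $\cat{I}_a \subseteq \cat{T}/\cat{I}_b$, and the image of $g$ in the further quotient $\cat{T}/\langle\cat{I}_1,\cat{I}_2\rangle$ is already $\alpha$-compact by Theorem \ref{theorem:thomason_neeman}. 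One modifies $\tilde{g}_0$ by a (possibly iterated) cone construction with objects built from $\cat{H}_1 \cup \cat{H}_2$, chosen to cancel the obstruction to $\alpha$-compactness of $q_b(\tilde{g})$ while preserving the $q_a$-image; once both projections are $\alpha$-compact, one verifies $\tilde{g} \in \cat{T}^{\alpha}$. Producing this lift is the main obstacle of the proof.

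Granted such lifts, let $\cat{E} := \cat{H}_1 \cup \cat{H}_2 \cup \{\tilde{g} : g \in \cat{G}_1 \cup \cat{G}_2\} \subseteq \cat{T}^{\alpha}$. For $Y \in \cat{E}^{\perp}$: orthogonality against $\cat{H}_a$ yields $Y \in \cat{I}_1^{\perp} \cap \cat{I}_2^{\perp}$, so $Y \cong r_a q_a Y$, and then $\Hom_{\cat{T}}(\tilde{g}, Y) \cong \Hom_{\cat{T}/\cat{I}_a}(g, q_a Y) = 0$ forces $q_a Y = 0$ for both $a$, hence $Y \in \cat{I}_a \cap \cat{I}_a^{\perp} = 0$. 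The ``only if'' direction of the $\cat{T}^{\alpha}$-characterization follows from Theorem \ref{theorem:thomason_neeman} applied to $\cat{I}_a \subseteq \cat{T}$ (now $\alpha$-compactly generated by $\cat{H}_a$); the converse direction follows from the observation that the class of objects with $\alpha$-compact projections forms an $\alpha$-localizing subcategory containing $\cat{E}$.

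For the statement about $\cat{S}$: proper intersection combined with the orthogonality of $\cat{I}_1,\cat{I}_2$ makes $\{\cat{S} \cap \cat{I}_1, \cat{S} \cap \cat{I}_2\}$ an orthogonal pair of Bousfield subcategories of $\cat{S}$, and Theorem \ref{theorem:thomason_neeman} inside each $\cat{T}/\cat{I}_a$ translates (3), (4) into the $\cat{S}$-analogues of (1), (2). The first part of the proposition applied to $\cat{S}$ then shows $\cat{S}$ is $\alpha$-compactly generated. Any $X \in \cat{S}^{\alpha}$ has $q_a X \in (\cat{S}/(\cat{S} \cap \cat{I}_a))^{\alpha} \subseteq (\cat{T}/\cat{I}_a)^{\alpha}$ (the inclusion holding by Theorem \ref{theorem:thomason_neeman} since $\cat{S}/(\cat{S} \cap \cat{I}_a)$ is $\alpha$-compactly generated in $\cat{T}/\cat{I}_a$), whence $X \in \cat{T}^{\alpha}$ by the characterization for $\cat{T}$; thus $\cat{S}$ is $\alpha$-compactly generated in $\cat{T}$.
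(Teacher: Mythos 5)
Your proposal runs into a serious gap that is precisely the central difficulty the paper is designed to overcome. For $\alpha>\aleph_0$, being $\alpha$-compact means more than being $\alpha$-small: one also needs the $\alpha$-perfection condition, which is a \emph{collective} property of a class of objects rather than a property one can verify for one object at a time via a $\Hom$-isomorphism. Your argument that each $H\in\cat{H}_a$ is $\alpha$-compact in $\cat{T}$ (using $\Hom_{\cat{T}}(H,W)\cong\Hom_{\cat{T}/\cat{I}_b}(j_b^\ast H, j_b^\ast W)$ and the fact that $j_b^\ast$ preserves coproducts) establishes only $\alpha$-smallness. Similarly, the sentence ``once both projections are $\alpha$-compact, one verifies $\tilde{g}\in\cat{T}^\alpha$'' is exactly the nontrivial claim that needs proving, and you give no argument for it; you cannot deduce $X\in\cat{T}^\alpha$ merely from $j_1^\ast X$ and $j_2^\ast X$ being $\alpha$-compact without establishing that the \emph{whole class} of such objects is $\alpha$-perfect. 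This is where the paper's Proposition on $\cand=\{X: j_a^\ast X\in(\cat{T}/\cat{I}_a)^\alpha\}$ being an $\alpha$-perfect class of $\alpha$-small objects does real work: it verifies perfection via Neeman's criterion (the functor $\pi: A(\cat{T})\lto\add{\cand}$ preserves coproducts), exploiting the ``cover'' of $\add{\cand}$ by the two Gabriel localizations to $\add{\{(\cat{T}/\cat{I}_a)^\alpha\}}$ and showing that $\Ker({q_1}_\ast)\cap\Ker({q_2}_\ast)=0$.

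There is a second, related, error at the end of your argument: you write that the converse direction of the characterization follows ``from the observation that the class of objects with $\alpha$-compact projections forms an $\alpha$-localizing subcategory containing $\cat{E}$.'' That observation shows $\cat{T}^\alpha\subseteq\cand$ (the smallest $\alpha$-localizing subcategory containing your generating set is contained in $\cand$), which is the same direction you already established via Theorem \ref{theorem:thomason_neeman}, not the converse. The missing inclusion $\cand\subseteq\cat{T}^\alpha$ is again precisely the statement that $\cand$ is $\alpha$-perfect. In short, your strategy is essentially Rouquier's $\aleph_0$-argument (lift generators piecewise and glue), which works when perfection is automatic; the paper departs from that approach exactly because for $\alpha>\aleph_0$ the perfection condition requires a global argument, and your proposal does not supply one.
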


We develop the proof as a series of lemmas. Since the $\alpha = \aleph_0$ is handled by \cite[Proposition 5.14]{Rouquier08}, we assume that $\alpha > \aleph_0$. Throughout this section the notation of the proposition is in force. For $a \in \{1,2\}$ we write ${i_a}_*: \cat{I}_a \lto \cat{T}$ for the inclusion, $i_a^!$ for the right adjoint of ${i_a}_*$, $j_a^*: \cat{T} \lto \cat{T}/\cat{I}_a$ for the quotient functor and ${j_a}_*$ for the right adjoint of $j_a^*$.

To prove that $\cat{T}$ is $\alpha$-compactly generated we need to produce, in the language of \cite[Ch.8]{NeemanBook}, an $\alpha$-perfect set of $\alpha$-small objects, which generates. The condition of $\alpha$-smallness is very simple: an object $X \in \cat{T}$ is \emph{$\alpha$-small} if for every family $\{ Y_i \}_{i \in I}$ of objects of $\cat{T}$, any morphism
\[
X \lto \bigoplus_{i \in I} Y_i
\]
factors through a subcoproduct $\bigoplus_{i \in J} Y_i$ for some subset $J \subseteq I$ of cardinality $|J| < \alpha$. An object $X$ is $\aleph_0$-small if and only if $\cat{T}(X,-)$ commutes with coproducts, and in this case one says that $X$ is \emph{compact}. We refer the reader to \cite[Ch.3]{NeemanBook} for the definition of $\alpha$-perfect classes, and restrict ourselves here to one trivial fact: any triangulated subcategory of $\cat{T}$ is an $\aleph_0$-perfect class.

By hypothesis the quotients $\cat{T}/\cat{I}_a$ are $\alpha$-compactly generated, and $\cat{I}_b$ is $\alpha$-compactly generated in $\cat{T}/\cat{I}_a$ for $\{a,b\} = \{1,2\}$. Hence these categories all possess $\alpha$-perfect classes of $\alpha$-small objects which generate. The strategy employed by Rouquier \cite{Rouquier08} in the $\alpha = \aleph_0$ case is to take generating sets $\cat{E}$ and $\cat{E}'$ for $\cat{I}_2, \cat{T}/\cat{I}_2$ respectively, use a gluing argument to lift $\cat{E}'$ to class $\cat{E}''$ of compact objects in $\cat{T}$, and take the union $\cat{E} \cup \cat{E}''$. This is a generating set of compact objects for $\cat{T}$.

In the $\alpha > \aleph_0$ case we take a different approach, in which it seems easier to manage the perfection condition (which is trivial for $\alpha = \aleph_0$). To proceed, we first recall how to rephrase the condition on our generating set in terms of a property of a certain exact functor between abelian categories.

A triangulated subcategory $\cat{S} \subseteq \cat{T}$ is said to be \emph{$\alpha$-localising} if the coproduct of fewer than $\alpha$ objects of $\cat{S}$ lies in $\cat{S}$. For example, the class $\cat{T}^\alpha$ of $\alpha$-compact objects is an $\alpha$-localising triangulated subcategory of $\cat{T}$. Given an $\alpha$-localising subcategory $\cat{S}$ of $\cat{T}$ we denote by $\add{\cat{S}}$ the abelian category of all functors $\cat{S}^{\textrm{op}} \lto \Ab$ which preserve products of fewer than $\alpha$ objects, where $\Ab$ is the category of abelian groups. There is a canonical homological functor
\[
\cat{T} \lto \add{\cat{S}}, \qquad X \mapsto \cat{T}(-,X)|_{\cat{S}}.
\]
Let $\cat{T} \lto A(\cat{T})$ be Freyd's universal homological functor, where $A(\cat{T})$ is the abelianisation of $\cat{T}$ \cite[Ch. 5]{NeemanBook}. From the universal property of this construction, we deduce an exact functor
\[
\pi: A(\cat{T}) \lto \add{\cat{S}},
\]
and Neeman proves in \cite[Theorem 1.8]{NeemanBook} that $\cat{S}$ is an $\alpha$-perfect class of $\alpha$-small objects precisely when $\pi$ preserves coproducts. Here, then, is the strategy of our proof: in Definition \ref{definition:generating_set} below we take the obvious candidate for a generating set $\cat{S} = \cat{T}^{|\alpha|}$ of $\cat{T}$. We have to prove two things: firstly, that this is an $\alpha$-perfect class of $\alpha$-small objects, and secondly, that it generates. The second condition is easily verified, and for the first we just need to prove that $\pi$ preserves coproducts. The cocovering $\{ \cat{I}_1, \cat{I}_2 \}$ of $\cat{T}$ leads to a pair of localisations of $\add{\cat{S}}$, which we may think of as a ``cover'' of this abelian category. Checking that $\pi$ preserves coproducts then becomes a ``local'' problem with respect to this cover. The local pieces in the cover correspond to the quotients $\cat{T}/\cat{I}_a$, and we can use the fact that these categories are $\alpha$-compactly generated to complete the proof.

\begin{definition}\label{definition:generating_set} We define a full subcategory of $\cat{T}$ by
\[
\cand = \{ X \in \cat{T} \l j_a^*(X) \in (\cat{T}/\cat{I}_a)^\alpha \text{ for } a \in \{ 1,2 \} \}.
\]
\end{definition}

\begin{lemma} $\cand$ is an $\alpha$-localising subcategory of $\cat{T}$.
\end{lemma}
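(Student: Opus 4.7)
The plan is to observe that $\cand$ is defined as the intersection, over $a \in \{1,2\}$, of the preimages $(j_a^*)^{-1}\bigl( (\cat{T}/\cat{I}_a)^\alpha \bigr)$, and to show that each such preimage is an $\alpha$-localising subcategory of $\cat{T}$. Since the intersection of any family of $\alpha$-localising triangulated subcategories is again $\alpha$-localising, this is enough.

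To carry this out, I would first recall that each quotient functor $j_a^*: \cat{T} \lto \cat{T}/\cat{I}_a$ is a triangulated functor that preserves coproducts (being the left adjoint of ${j_a}_*$, or equivalently a Verdier quotient onto a category with coproducts where the projection preserves them since $\cat{I}_a$ is Bousfield). Next, I would invoke the fact, already stated in the excerpt, that $(\cat{T}/\cat{I}_a)^\alpha$ is an $\alpha$-localising triangulated subcategory of $\cat{T}/\cat{I}_a$: it is closed under shifts, cones, direct summands, and coproducts indexed by sets of cardinality strictly less than $\alpha$.

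From this it follows formally that $(j_a^*)^{-1}\bigl( (\cat{T}/\cat{I}_a)^\alpha \bigr)$ is a triangulated subcategory of $\cat{T}$ closed under $<\alpha$-coproducts. Indeed, if $X \lto Y \lto Z \lto \Sigma X$ is a triangle in $\cat{T}$ with $j_a^*(X), j_a^*(Y) \in (\cat{T}/\cat{I}_a)^\alpha$, then applying the triangulated functor $j_a^*$ gives a triangle in $\cat{T}/\cat{I}_a$ whose third vertex $j_a^*(Z)$ lies in $(\cat{T}/\cat{I}_a)^\alpha$. Similarly, if $\{X_i\}_{i\in I}$ with $|I| < \alpha$ are objects whose images lie in $(\cat{T}/\cat{I}_a)^\alpha$, then
\[
j_a^*\Bigl(\bigoplus_{i\in I} X_i\Bigr) \;\cong\; \bigoplus_{i\in I} j_a^*(X_i)
\]
lies in $(\cat{T}/\cat{I}_a)^\alpha$ since the latter is closed under $<\alpha$-coproducts. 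Closure under shifts is immediate from the fact that $j_a^*$ commutes with $\Sigma$.

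Finally, intersecting over $a \in \{1,2\}$ yields $\cand$, and intersection preserves the properties of being triangulated and being closed under $<\alpha$-coproducts. I do not anticipate any genuine obstacle here; the only thing to be careful about is confirming that $j_a^*$ really does preserve coproducts, which is standard for Bousfield localisations.
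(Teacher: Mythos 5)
Your proposal is correct and follows essentially the same route as the paper's one-line proof, which appeals precisely to the two facts you isolate: that $j_a^*$ preserves coproducts and that $(\cat{T}/\cat{I}_a)^\alpha$ is $\alpha$-localising. You have simply spelled out the intermediate step (writing $\cand$ as an intersection of preimages, each of which is $\alpha$-localising) that the paper leaves implicit.
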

\begin{proof}
Follows from the fact that $j_a^*$ preserve coproducts, and $(\cat{T}/\cat{I}_a)^\alpha$ is $\alpha$-localising.
\end{proof}

Let us recall the statement of the Neeman-Ravenel-Thomason localisation theorem.

\begin{theorem}\label{theorem:thomason_neeman} Let $\cat{R}$ be a trangulated category with coproducts which is $\alpha$-compactly generated, and let $\cat{S} \subseteq \cat{R}$ be a localising subcategory $\alpha$-compactly generated in $\cat{R}$. Then $\cat{S}$ is $\alpha$-compactly generated, and $\cat{S}^\alpha = \cat{R}^\alpha \cap \cat{S}$. The canonical functor $\cat{R} \lto \cat{R}/\cat{S}$ preserves $\alpha$-compactness and the induced functor
\[
\cat{R}^\alpha/\cat{S}^\alpha \lto (\cat{R}/\cat{S})^\alpha
\]
is an equivalence \textup{(}recall that $\alpha > \aleph_0$\textup{)}.
\end{theorem}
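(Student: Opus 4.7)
The plan is to verify the four assertions sequentially, leaning on Neeman's characterisation of $\alpha$-compact generation via $\alpha$-perfect classes of $\alpha$-small generators (\cite[Ch.~8]{NeemanBook}). Since $\cat{S} = \langle S \rangle$ for some set $S \subseteq \cat{R}^\alpha$ by hypothesis, and $\cat{S}$ is localising (so small coproducts and distinguished triangles in $\cat{S}$ coincide with those in $\cat{R}$), I would first observe that each object of $S$ remains $\alpha$-small and $\alpha$-perfect when viewed inside $\cat{S}$: this is pure inheritance. The $\alpha$-localising hull $\cat{S}_0$ of $S$ inside $\cat{S}$ is therefore an $\alpha$-perfect class of $\alpha$-small generators for $\cat{S}$, which by Neeman's theorem identifies $\cat{S}$ as $\alpha$-compactly generated with $\cat{S}^\alpha = \cat{S}_0$.

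For the identification $\cat{S}^\alpha = \cat{R}^\alpha \cap \cat{S}$, the inclusion $\supseteq$ is the same inheritance argument applied to a single object. For $\subseteq$, the subcategory $\cat{R}^\alpha \cap \cat{S}$ is easily seen to be an $\alpha$-localising triangulated subcategory of $\cat{S}$ containing $S$, so it contains the $\alpha$-localising hull $\cat{S}_0 = \cat{S}^\alpha$ just identified.

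For the quotient assertions, write $j^*: \cat{R} \lto \cat{R}/\cat{S}$ for the quotient functor, with right adjoint $j_*$. To see that $j^*$ preserves $\alpha$-compactness and that $\{ j^*(Y) : Y \in \cat{R}^\alpha \}$ generates $\cat{R}/\cat{S}$ as an $\alpha$-perfect class of $\alpha$-small objects, I would exploit the Bousfield decomposition: $j_*$ embeds $\cat{R}/\cat{S}$ as $\cat{S}^\perp \subseteq \cat{R}$, and the adjunction identity $\Hom_{\cat{R}/\cat{S}}(j^* Y, Z) = \Hom_{\cat{R}}(Y, j_* Z)$ together with the $\alpha$-compact generation of $\cat{S}$ established above implies that $j_*$ preserves the $\alpha$-filtered colimit structure needed to test $\alpha$-smallness and $\alpha$-perfection of $j^*(Y)$. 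Full-faithfulness of the induced functor $\cat{R}^\alpha/\cat{S}^\alpha \lto (\cat{R}/\cat{S})^\alpha$ is then a routine Verdier-roof computation, using that the cone of any morphism between $\alpha$-compact objects with target in $\cat{S}$ lies in $\cat{S}^\alpha = \cat{R}^\alpha \cap \cat{S}$. The main obstacle is essential surjectivity: given $W \in (\cat{R}/\cat{S})^\alpha$, one must produce a lift $\tilde{W} \in \cat{R}^\alpha$ with $j^*(\tilde{W}) \cong W$. The strategy is to express an arbitrary preimage of $W$ as an $\alpha$-filtered homotopy colimit of $\alpha$-compacts (available because $\cat{R}$ is $\alpha$-compactly generated), use the $\alpha$-smallness of the identity of $W$ to detect a compact approximation, and split the resulting idempotent; this last step requires idempotent completeness of $\cat{R}^\alpha$, which is precisely where the hypothesis $\alpha > \aleph_0$ is used.
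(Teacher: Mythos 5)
The paper does not prove this statement; it is the Thomason--Neeman localisation theorem, quoted from Neeman's book, and the proof reads simply ``See \cite[Theorem 4.4.9]{NeemanBook}.'' So what you are offering is a proposed re-proof of a deep and famously technical theorem (it is the main result of Chapter~4 of Neeman's book), and you should evaluate it against what such a proof actually requires.

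The first half of your sketch (that $\cat{S}$ inherits $\alpha$-compact generation, and that $\cat{S}^\alpha = \cat{R}^\alpha \cap \cat{S}$) is broadly reasonable: $\alpha$-smallness and $\alpha$-perfection restrict from $\cat{R}$ to the coproduct-closed full subcategory $\cat{S}$, since one only tests against fewer coproducts; and $\cat{R}^\alpha \cap \cat{S}$ is $\alpha$-localising and contains $S$, giving the other inclusion. These steps are routine, if a bit quick on the recursive definition of $\alpha$-perfection.

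The genuine gap is your treatment of the quotient. You write that the adjunction $\Hom_{\cat{R}/\cat{S}}(j^*Y, Z) \cong \Hom_{\cat{R}}(Y, j_*Z)$ ``together with the $\alpha$-compact generation of $\cat{S}$ established above implies that $j_*$ preserves the $\alpha$-filtered colimit structure needed to test $\alpha$-smallness and $\alpha$-perfection of $j^*(Y)$.'' This is precisely the hard part of the theorem and cannot be waved through. In the $\alpha = \aleph_0$ case the analogous step is the classical Neeman--Bousfield fact that if $\cat{S}$ is generated by objects compact \emph{in $\cat{R}$}, then $j_*$ preserves coproducts, so a morphism $j^*Y \to \bigoplus Z_i$ transposes to $Y \to \bigoplus j_* Z_i$ and one invokes compactness of $Y$. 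But for $\alpha > \aleph_0$ the functor $j_*$ does \emph{not} in general preserve coproducts, and there is no notion of ``$\alpha$-filtered colimit'' in a triangulated category that rescues the argument. The entire difficulty of Neeman's Chapter~4 --- the elaborate bookkeeping of $\alpha$-perfect classes, maximal perfect classes, and the ``good'' objects --- exists exactly to bridge this gap. Your sketch asserts the conclusion of that machinery as if it followed from the adjunction.

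Your essential-surjectivity step has the same problem one layer deeper: before you can express $W$ as a homotopy colimit whose terms are images of $\alpha$-compacts and then run an $\alpha$-smallness argument, you already need to know that $j^*$ sends $\cat{R}^\alpha$ into $(\cat{R}/\cat{S})^\alpha$, which is the unproved step above. (Your remark on the role of $\alpha > \aleph_0$ is partly on target --- closure under countable coproducts does make $\cat{R}^\alpha/\cat{S}^\alpha$ idempotent complete, which is why one gets an honest equivalence rather than one up to summands as in the Thomason case --- but this is a consequence you only get to exploit after the harder preservation statement is in hand.) In short: the proposal is a plausible-looking outline that would be essentially correct at $\alpha = \aleph_0$, but at $\alpha > \aleph_0$ it replaces the theorem's actual content with an appeal to an adjunction that does not do the required work.
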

\begin{proof}
See \cite[Theorem 4.4.9]{NeemanBook}.
\end{proof}

The full subcategory of $\alpha$-compact objects in $\cat{I}_a$ is denoted by $\cat{I}_a^\alpha$. One needs to be careful to distinguish between objects $X \in \cat{I}_a$ which are $\alpha$-compact in $\cat{I}_a$, and those that are $\alpha$-compact in the larger category $\cat{T}$. At this point, we do not know that these classes are the same. It follows from hypotheses $(1)$ and $(2)$ of Proposition \ref{prop:main_theorem_1}, and Theorem \ref{theorem:thomason_neeman}, that $\cat{I}_a^\alpha \subseteq \cat{I}_a$ is precisely the class of objects $X \in \cat{I}_a$ with the property that $j_b^*(X) \in (\cat{T}/\cat{I}_b)^\alpha$, where $\{a,b\} = \{1,2\}$. Moreover, $\cat{I}_a = \langle \cat{I}_a^\alpha \rangle$.

\begin{lemma} There is an inclusion $\cat{I}_1^\alpha \cup \cat{I}_2^\alpha \subseteq \cand$.
\end{lemma}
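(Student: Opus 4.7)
The plan is to unpack the two defining conditions of membership in $\cand$ for an object $X \in \cat{I}_a^\alpha$ (with $a \in \{1,2\}$), namely that $j_1^*(X)$ is $\alpha$-compact in $\cat{T}/\cat{I}_1$ and $j_2^*(X)$ is $\alpha$-compact in $\cat{T}/\cat{I}_2$, and verify each separately. By symmetry it suffices to treat $X \in \cat{I}_1^\alpha$.

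For the condition on $j_1^*$, the argument is immediate: since $\cat{I}_1 = \Ker(j_1^*)$, any $X \in \cat{I}_1$ satisfies $j_1^*(X) = 0$, and the zero object is trivially $\alpha$-compact in $\cat{T}/\cat{I}_1$. For the condition on $j_2^*$, I would invoke the characterisation of $\cat{I}_1^\alpha$ recorded in the paragraph immediately preceding the lemma: under hypotheses (1) and (2) of Proposition \ref{prop:main_theorem_1}, Theorem \ref{theorem:thomason_neeman} applied to the $\alpha$-compactly generated localising subcategory $\cat{I}_1 \subseteq \cat{T}/\cat{I}_2$ identifies $\cat{I}_1^\alpha$ with the objects $X \in \cat{I}_1$ such that $j_2^*(X) \in (\cat{T}/\cat{I}_2)^\alpha$. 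So membership in $\cat{I}_1^\alpha$ directly supplies the required $\alpha$-compactness of $j_2^*(X)$, and analogously for $X \in \cat{I}_2^\alpha$.

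There is no real obstacle here: the entire content of the lemma is a bookkeeping consequence of the already-established description of $\cat{I}_a^\alpha$ together with the tautology $j_a^*|_{\cat{I}_a} = 0$. The only thing to be careful about is the distinction, emphasised in the preceding discussion, between $\alpha$-compactness inside $\cat{I}_a$ and $\alpha$-compactness inside $\cat{T}$; but since $\cand$ is defined purely in terms of the quotient functors $j_a^*$, this distinction does not enter the verification.
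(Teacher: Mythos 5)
Your proof is correct and follows essentially the same route as the paper's: note that $j_a^*(X) = 0$ for $X \in \cat{I}_a$, and use the characterisation of $\cat{I}_a^\alpha$ (established just before the lemma from hypotheses (1), (2) and Theorem~\ref{theorem:thomason_neeman}) to obtain $j_b^*(X) \in (\cat{T}/\cat{I}_b)^\alpha$. You simply spell out the two conditions defining $\cand$ separately where the published proof compresses them into two sentences.
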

\begin{proof}
If $X \in \cat{I}_a^\alpha$ then by $(2)$, $j_b^*(X) \in (\cat{T}/\cat{I}_b)^\alpha$. Since $j_a^*(X) = 0$, it follows that $X \in \cand$.
\end{proof}

\begin{lemma}\label{lemma:factorisation} Given $X \in \cand$ and $Y \in \cat{I}_a$ for $a \in \{1,2\}$, any morphism $f: X \lto Y$ in $\cat{T}$ factors as
\[
X \lto I \lto Y
\]
for some $I \in \cat{I}_a^\alpha$.
\end{lemma}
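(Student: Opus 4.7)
By the symmetry of the setup between $\cat{I}_1$ and $\cat{I}_2$, it suffices to treat the case $a=1$; fix $Y \in \cat{I}_1$ and a morphism $f: X \to Y$. The plan is to push the problem into the quotient $\cat{T}/\cat{I}_2$, solve it there using the $\alpha$-compact generation of $\cat{I}_1$, and lift the factorization back to $\cat{T}$.

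First I would use orthogonality: since $\cat{I}_1 \subseteq \cat{I}_2^\perp$, every object of $\cat{I}_1$ is $\cat{I}_2$-local, and the canonical map
\[
\Hom_{\cat{T}}(W, Z) \xrightarrow{\sim} \Hom_{\cat{T}/\cat{I}_2}(j_2^*(W), j_2^*(Z))
\]
is a bijection for every $W \in \cat{T}$ and $Z \in \cat{I}_1$. This reduces the problem to producing a factorization $j_2^*(X) \to I \to j_2^*(Y)$ of $j_2^*(f)$ in $\cat{T}/\cat{I}_2$ with $I \in \cat{I}_1^\alpha$: both legs then lift uniquely to $\cat{T}$ (the map $X \to I$ via the above bijection with $Z = I$, and $I \to Y$ by the full faithfulness of the composite $\cat{I}_1 \hookrightarrow \cat{T} \to \cat{T}/\cat{I}_2$), and the composite necessarily equals $f$ because $j_2^*$ is faithful on morphisms into $Y$.

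Next, in $\cat{T}/\cat{I}_2$ the source $j_2^*(X)$ lies in $(\cat{T}/\cat{I}_2)^\alpha$ by the defining property of $\cand$, while $j_2^*(Y)$ lies in the localising subcategory $\cat{I}_1 \subseteq \cat{T}/\cat{I}_2$ which, by hypothesis~(2), is $\alpha$-compactly generated in $\cat{T}/\cat{I}_2$; by Theorem~\ref{theorem:thomason_neeman}, its $\alpha$-compact objects are precisely $\cat{I}_1^\alpha = \cat{I}_1 \cap (\cat{T}/\cat{I}_2)^\alpha$. The task thus becomes the following general principle: in an $\alpha$-compactly generated triangulated category $\cat{R}$, any morphism from an object of $\cat{R}^\alpha$ to an object of an $\alpha$-compactly generated localising subcategory $\cat{S}$ factors through an object of $\cat{S}^\alpha$.

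The hard part will be this general principle. I would handle it by arguing that each object of $\cat{S}$ can be exhibited as a homotopy colimit over an $\alpha$-filtered small diagram of objects of $\cat{S}^\alpha$ (such colimits computed inside $\cat{S}$ coincide with those computed inside $\cat{R}$, since $\cat{S}$ is closed under coproducts), and then invoking the fact that $\alpha$-compact objects of $\cat{R}$ commute with such $\alpha$-filtered homotopy colimits, so the given morphism factors through some term of the diagram. The structure theorem needed here is standard for $\alpha$-compactly generated triangulated categories with $\alpha > \aleph_0$ (cf.\ \cite{NeemanBook}), though its invocation is the technical heart of the argument.
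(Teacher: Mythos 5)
Your reduction step is exactly the paper's: you pass to $\cat{T}/\cat{I}_b$ (your $\cat{T}/\cat{I}_2$), note that for $Z \in \cat{I}_1 \subseteq \cat{I}_2^\perp$ the functor $j_2^*$ induces a bijection on Homs into $Z$, solve the factorisation problem there, and lift back. The paper does precisely this, just phrased via the unit $X \to {j_b}_* j_b^* X$ and the adjunction ${j_b}_* \dashv j_b^*$ rather than via the Hom bijection; the two formulations are equivalent.

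The ``general principle'' you isolate --- that in an $\alpha$-compactly generated $\cat{R}$, any morphism from an object of $\cat{R}^\alpha$ to an object of an $\alpha$-compactly-generated localising subcategory $\cat{S}$ factors through an object of $\cat{S}^\alpha$ --- is exactly \cite[Theorem 4.3.3]{NeemanBook}, and this is what the paper cites; you should simply invoke it. Your proposed proof of this principle by exhibiting objects of $\cat{S}$ as $\alpha$-filtered homotopy colimits of $\alpha$-compacts and appealing to ``$\alpha$-compacts commute with $\alpha$-filtered homotopy colimits'' does not go through in Neeman's framework. A bare triangulated category does not have a canonical notion of homotopy colimit over a general ($\alpha$-)filtered index category --- only sequential (or well-ordered) homotopy colimits are available --- and Neeman's definition of $\alpha$-compactness is not stated in terms of any such commutation property: it is $\alpha$-smallness together with membership in a maximal $\alpha$-perfect class. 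Making your heuristic precise is essentially reproving Theorem 4.3.3, and the actual proof there proceeds via $\alpha$-perfect classes and a transfinite approximation argument, not via filtered colimits. So: the structure of your argument is right and parallels the paper, but the step you flagged as ``the technical heart'' needs to be a citation of Theorem 4.3.3 rather than the sketched colimit argument, which would not compile into a rigorous proof in this setting.
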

\begin{proof}
Let $b \in \{1,2\}$ be such that $b \neq a$. By hypothesis $(2)$ there is a set $Q \subseteq (\cat{T}/\cat{I}_b)^\alpha \cap \cat{I}_a$ such that $\langle Q \rangle = \cat{I}_a$. By \cite[Theorem 4.3.3]{NeemanBook} the morphism $j_b^*f: j_b^*X \lto j_b^*Y$ factors in $\cat{T}/\cat{I}_b$ as
\[
j_b^*X \lto N \lto j_b^*Y
\]
for some $N \in \langle Q \rangle^\alpha = \cat{I}_a^\alpha$. Since $I := {j_b}_*N$ belongs to $\cat{I}_a^\alpha$, the composite
\[
X \xlto{\can} {j_b}_*j_b^*X \lto {j_b}_*N \lto {j_b}_*j_b^*Y \cong Y
\]
provides the desired factorisation of $f$.
\end{proof}

We use several facts about proper intersection of subcategories developed by Rouquier \cite[\S 5]{Rouquier08}. For the reader's convenience, the necessary facts are recalled here in Appendix \ref{section:proper_intersection}. For example, since $\cat{I}_1, \cat{I}_2$ are properly intersecting the Verdier sum operation is commutative: $\cat{I}_1 \star \cat{I}_2 = \cat{I}_2 \star \cat{I}_1$. It follows that $\cat{I}_1 \star \cat{I}_2$ is a Bousfield subcategory of $\cat{T}$ and, following the notation of \cite[\S 5]{Rouquier08}, we write ${i_{\cup}}_*: \cat{I}_1 \star \cat{I}_2 \lto \cat{T}$ for the inclusion, $i_{\cup}^!$ for its right adjoint, $j_{\cup}^*: \cat{T} \lto \cat{T}/(\cat{I}_1 \star \cat{I}_2)$ for the quotient, and ${j_{\cup}}_*$ for its right adjoint. Note that in \emph{loc.cit.}\ Rouquier writes $\langle \cat{I}_1 \cup \cat{I}_2 \rangle_{\infty}$ for $\cat{I}_1 \star \cat{I}_2$, to reflect the fact that this is the smallest triangulated subcategory of $\cat{T}$ containing $\cat{I}_1 \cup \cat{I}_2$. For $\{a,b\} = \{1,2\}$ the quotient $j_{\cup}^*$ induces a functor $j_{a\cup}^*: \cat{T}/\cat{I}_a \lto \cat{T}/(\cat{I}_1 \star \cat{I}_2)$ fitting into a sequence
\[
0 \lto \cat{I}_b \lto \cat{T}/\cat{I}_a \lto \cat{T}/(\cat{I}_1 \star \cat{I}_2) \lto 0
\]
which is \emph{exact}, in the sense that $\cat{I}_b \lto \cat{T}/\cat{I}_a$ is fully faithful and $j_{a \cup}^*$ is, up to natural equivalence, the Verdier quotient of $\cat{T}/\cat{I}_a$ by $\cat{I}_b$. We write ${j_{a\cup}}_*$ for the right adjoint of $j_{a\cup}^*$.

\begin{lemma}\label{lemma:lifting} Given $a \in \{1,2\}$ and $Y \in (\cat{T}/\cat{I}_a)^\alpha$, there is $X \in \cand$ such that $j_a^*(X) \cong Y$.
\end{lemma}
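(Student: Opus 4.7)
The plan is to construct $X$ as a homotopy pullback in $\cat{T}$, obtained by gluing $X_0 := {j_a}_* Y$ with an $\alpha$-compact lift of its image in $\cat{T}/\cat{I}_b$. Let $b \in \{1,2\}$ be the index distinct from $a$. Then $j_a^*(X_0) \cong Y$ via the counit, but $j_b^*(X_0)$ need not be $\alpha$-compact, and the bulk of the work lies in correcting this defect without disturbing $j_a^*(X_0)$.

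First I would observe that $j_\cup^*(X_0) = j_{a\cup}^*(Y) = j_{b\cup}^*(j_b^*(X_0))$ lies in $(\cat{T}/(\cat{I}_1 \star \cat{I}_2))^\alpha$: hypothesis~(2) together with Theorem~\ref{theorem:thomason_neeman} applied to $\cat{I}_b \subseteq \cat{T}/\cat{I}_a$ guarantees that $j_{a\cup}^*$ preserves $\alpha$-compactness. Theorem~\ref{theorem:thomason_neeman} applied on the other side, to $\cat{I}_a \subseteq \cat{T}/\cat{I}_b$, then produces $W_0 \in (\cat{T}/\cat{I}_b)^\alpha$ with $j_{b\cup}^*(W_0) \cong j_{a\cup}^*(Y)$; by adjunction this isomorphism corresponds to a morphism $\hat{g}\colon W_0 \to {j_{b\cup}}_* j_{a\cup}^*(Y)$ in $\cat{T}/\cat{I}_b$.

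Next I would try to lift $\hat{g}$ along the canonical map $\eta\colon j_b^*(X_0) \to {j_{b\cup}}_* j_{a\cup}^*(Y)$, whose fibre lies in $\cat{I}_a \subseteq \cat{T}/\cat{I}_b$. The obstruction is a class $\psi\colon W_0 \to F$ with $F \in \cat{I}_a$. Because $W_0$ is $\alpha$-compact in $\cat{T}/\cat{I}_b$ and $\cat{I}_a$ is $\alpha$-compactly generated there, the factorisation technique from the proof of Lemma~\ref{lemma:factorisation} (resting on \cite[Theorem~4.3.3]{NeemanBook}) lets us factor $\psi$ through some $N \in \cat{I}_a^\alpha$. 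Setting $W$ to be the fibre of $W_0 \to N$ produces $W \in (\cat{T}/\cat{I}_b)^\alpha$ together with a lifted morphism $f\colon W \to j_b^*(X_0)$; applying $j_{b\cup}^*$ recovers the prescribed isomorphism, so $\mathrm{cone}(f) \in \cat{I}_a$.

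Finally I would define $X$ by the homotopy pullback triangle
\[
X \to X_0 \oplus {j_b}_* W \to {j_b}_* j_b^*(X_0) \to \Sigma X,
\]
whose middle map has components the unit $X_0 \to {j_b}_* j_b^*(X_0)$ and ${j_b}_*(f)$. Applying $j_b^*$ and using $j_b^* {j_b}_* \cong \mathrm{id}$ splits the resulting triangle and shows $j_b^*(X) \cong W$, which is $\alpha$-compact. On the other side, $\mathrm{cone}({j_b}_* f) = {j_b}_*(\mathrm{cone}(f))$, and by the orthogonality $\cat{I}_a \subseteq \cat{I}_b^\perp$ the functor ${j_b}_*$ restricts to the identity inclusion on $\cat{I}_a$, so this cone lies in $\cat{I}_a \subseteq \cat{T}$. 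Consequently $j_a^*({j_b}_* f)$ is an isomorphism and the same splitting argument yields $j_a^*(X) \cong j_a^*(X_0) \cong Y$, placing $X$ in $\cand$ with the required property. The main obstacle lies in the middle step: one must orchestrate Neeman's factorisation theorem and the Thomason--Neeman equivalence $(\cat{T}/\cat{I}_b)^\alpha/\cat{I}_a^\alpha \cong (\cat{T}/(\cat{I}_1 \star \cat{I}_2))^\alpha$ carefully enough that $W_0$ can be replaced by an $\alpha$-compact $W$ admitting a genuine morphism $f$ with controlled cone.
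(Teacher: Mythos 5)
Your construction is correct, but it differs from the one in the paper. The paper follows Rouquier's argument for \cite[Proposition 5.14]{Rouquier08}, producing $X$ as a symmetric Mayer--Vietoris gluing: setting $D_a = Y$ and $D_\cup = j_{a\cup}^*D_a$, it finds an $\alpha$-compact preimage $D_b \in (\cat{T}/\cat{I}_b)^\alpha$ of $D_\cup$ under $j_{b\cup}^*$ (by Theorem \ref{theorem:thomason_neeman}), and then extends the difference of unit maps ${j_1}_*D_1 \oplus {j_2}_*D_2 \to {j_\cup}_*D_\cup$ to a triangle whose third vertex is $X$. Both $j_1^*X$ and $j_2^*X$ are read off directly by applying $j_a^*$ and splitting. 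You instead take the asymmetric seed $X_0 = {j_a}_*Y$ and glue over ${j_b}_*j_b^*(X_0)$ rather than over ${j_\cup}_*D_\cup$. The price is that you need an actual morphism $W \to j_b^*(X_0)$ in $\cat{T}/\cat{I}_b$, not merely a common image in $\cat{T}/(\cat{I}_1\star\cat{I}_2)$; this is what forces the extra obstruction analysis (factoring the obstruction through $N \in \cat{I}_a^\alpha$ via Neeman's factorisation theorem and passing from $W_0$ to $W$). The paper avoids that entire step by gluing over the ``intersection'' ${j_\cup}_*D_\cup$, where the required maps are just the units. Both proofs ultimately run on the same inputs — Theorem \ref{theorem:thomason_neeman} applied to $\cat{I}_a \subseteq \cat{T}/\cat{I}_b$ and $\cat{I}_b \subseteq \cat{T}/\cat{I}_a$ to obtain $\alpha$-compact local lifts — so the paper's route is strictly shorter; yours has the mild advantage of exhibiting a concrete morphism $W \to j_b^*(X_0)$ with cone in $\cat{I}_a$, making the ``correction on the $b$-side'' explicit.
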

\begin{proof}
We use the argument given in the proof of \cite[Proposition 5.14]{Rouquier08}. Let $b \in \{1,2\}$ be such that $\{a,b\} = \{1,2\}$. From hypotheses $(1),(2)$ and Theorem \ref{theorem:thomason_neeman} we deduce that the quotient functor $j_{a \cup}^*$ preserves $\alpha$-compactness. Hence, if we set $D_a = Y$, then the object $D_{\cup} := j_{a\cup}^*D_a$ is $\alpha$-compact in $\cat{T}/(\cat{I}_1 \star \cat{I}_2)$. Also by Theorem \ref{theorem:thomason_neeman}, the canonical functor
\[
j_{b\cup}^*: (\cat{T}/\cat{I}_b)^\alpha \lto \Big( \cat{T}/(\cat{I}_1 \star \cat{I}_2) \Big)^\alpha
\]
is a Verdier quotient, so we can find $D_b \in (\cat{T}/\cat{I}_b)^\alpha$ and an isomorphism $j_{b\cup}^*D_b \cong D_{\cup}$. There are unit morphisms $\eta_1: D_1 \lto {j_{1\cup}}_*D_\cup, \eta_2: D_2 \lto {j_{2\cup}}_*D_\cup$ and we define $\delta$ to be the morphism induced out of the coproduct ${j_1}_*D_1 \oplus {j_2}_*D_2$ by ${j_1}_*(\eta_1) - {j_2}_*(\eta_2)$. If we define $X$ by extending $\delta$ to a triangle
\[
X \lto {j_1}_*D_1 \oplus {j_2}_*D_2 \xlto{\delta} {j_{\cup}}_* D_{\cup} \xlto{+},
\]
then one checks that $j_a^*X \cong D_a = Y$, and that $j_b^*X \cong D_b$, so $X \in \cand$ as required.
\end{proof}

\begin{lemma}\label{lemma:essential_smallness} $\cand$ is essentially small.
\end{lemma}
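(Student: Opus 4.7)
The plan is to show that any $X \in \cand$ is determined, up to isomorphism, by data drawn from essentially small classes together with a morphism in a Hom set. The key tool is a Mayer--Vietoris triangle for the cocovering $\{\cat{I}_1, \cat{I}_2\}$, of exactly the form produced (for specific input) in the proof of Lemma~\ref{lemma:lifting}.

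First, I would establish that for every $X \in \cat{T}$ there is a distinguished triangle
\[
X \lto {j_1}_* j_1^* X \oplus {j_2}_* j_2^* X \lto {j_{\cup}}_* j_{\cup}^* X \lto \Sigma X,
\]
where the first morphism is built from the units of the two adjunctions $j_a^* \dashv {j_a}_*$ and the second from the difference of the two canonical comparisons. I would derive this from the localization triangle ${i_{\cup}}_* i_{\cup}^! X \to X \to {j_{\cup}}_* j_{\cup}^* X \to$ associated to the Bousfield subcategory $\cat{I}_1 \star \cat{I}_2$: orthogonality forces $\Hom_{\cat{T}}(\cat{I}_a, \Sigma \cat{I}_b) = 0$, so the first term splits canonically as ${i_1}_* i_1^! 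X \oplus {i_2}_* i_2^! X$. An octahedron, together with the facts about proper intersection recalled in Appendix~\ref{section:proper_intersection}, then assembles these pieces into the displayed triangle.

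Next, I would check that, for $X \in \cand$, each of the other three vertices of the triangle lies in an essentially small class. By hypothesis~(1), $(\cat{T}/\cat{I}_a)^\alpha$ is essentially small for $a \in \{1,2\}$, so $j_1^* X$ and $j_2^* X$ range over only a set of isomorphism classes. For $j_{\cup}^* X = j_{a\cup}^* j_a^* X$, the exact sequence
\[
0 \lto \cat{I}_b \lto \cat{T}/\cat{I}_a \lto \cat{T}/(\cat{I}_1 \star \cat{I}_2) \lto 0,
\]
together with hypothesis~(2) and Theorem~\ref{theorem:thomason_neeman}, shows that $\cat{T}/(\cat{I}_1 \star \cat{I}_2)$ is $\alpha$-compactly generated; in particular $(\cat{T}/(\cat{I}_1 \star \cat{I}_2))^\alpha$ is essentially small, and $j_{\cup}^* X$ lies in it because $j_{a\cup}^*$ preserves $\alpha$-compactness.

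Finally, with the source and target of the middle arrow of the Mayer--Vietoris triangle ranging over sets of isomorphism classes, the arrow itself ranges over a Hom set (since $\cat{T}$ has small Homs), and $X$ is recovered from it, up to non-unique isomorphism, as the shifted fiber. Hence the isomorphism classes of $\cand$ are parametrised by a set, proving essential smallness. The step I expect to cost the most effort is the construction of the Mayer--Vietoris triangle with the specified form of its maps: the ingredients are standard for orthogonal, properly intersecting Bousfield subcategories, but the bookkeeping with octahedra is the real technical input.
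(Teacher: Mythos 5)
Your argument is essentially the paper's: the paper invokes exactly the Mayer--Vietoris triangle you describe (citing Rouquier rather than re-deriving it), observes that $j_a^*X$ ranges over the essentially small class $(\cat{T}/\cat{I}_a)^\alpha$ for $a\in\{1,2\}$, and concludes that there is only a set's worth of possibilities for $X$. The extra work you foresee — assembling the triangle from orthogonality and octahedra, and separately verifying that $j_\cup^*X$ lies in the essentially small $(\cat{T}/(\cat{I}_1\star\cat{I}_2))^\alpha$ — is correct but not needed, since the paper takes the triangle as given and $j_\cup^*X \cong j_{1\cup}^*j_1^*X$ is already pinned down up to isomorphism once $j_1^*X$ is.
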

\begin{proof}
For $X \in \cand$ there is a canonical triangle \cite[Proposition 5.10]{Rouquier08}
\begin{equation}\label{eq:essential_smallness}
X \lto {j_1}_*j_1^*X \oplus {j_2}_*j_2^*X \lto {j_\cup}_*j_\cup^*X \xlto{+}.
\end{equation}
By hypothesis $j_a^*X \in (\cat{T}/\cat{I}_a)^\alpha$ for $a \in \{1,2\}$. Now, since $\cat{T}/\cat{I}_a$ is $\alpha$-compactly generated, $(\cat{T}/\cat{I}_a)^\alpha$ is essentially small. It follows that there is, up to isomorphism, only a ``set'' of possible objects $X$ in a triangle of the form (\ref{eq:essential_smallness}), whence $\cand$ is essentially small.
\end{proof}

\begin{lemma}\label{lemma:equivalence_quotients} For $a \in \{1,2\}$ the canonical functor
\begin{equation}\label{eq:equivalence_quotients}
j_a^*: \cand/(\cand \cap \cat{I}_a) \lto (\cat{T}/\cat{I}_a)^\alpha
\end{equation}
is an equivalence.
\end{lemma}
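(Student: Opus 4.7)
My plan is to verify that the functor in (\ref{eq:equivalence_quotients}) is well-defined, essentially surjective, and fully faithful. For well-definedness, the composite $\cand \hookrightarrow \cat{T} \xlto{j_a^*} \cat{T}/\cat{I}_a$ lands in $(\cat{T}/\cat{I}_a)^\alpha$ by the very definition of $\cand$, and sends $\cand \cap \cat{I}_a$ to zero; the universal property of the Verdier quotient then produces the desired functor $\cand/(\cand \cap \cat{I}_a) \lto (\cat{T}/\cat{I}_a)^\alpha$.

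Essential surjectivity is immediate from Lemma \ref{lemma:lifting}: every $Y \in (\cat{T}/\cat{I}_a)^\alpha$ is, up to isomorphism, of the form $j_a^*(X)$ for some $X \in \cand$.

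For faithfulness, I would take $f: X \lto Y$ in $\cand$ with $j_a^*(f) = 0$; then $f$ factors through some $I \in \cat{I}_a$, and Lemma \ref{lemma:factorisation} refines this to a factorisation through some $I' \in \cat{I}_a^\alpha$. Since $\cat{I}_a^\alpha \subseteq \cand$ (as noted just before Lemma \ref{lemma:factorisation}), this exhibits $f$ as factoring through an object of $\cand \cap \cat{I}_a$, so $f$ vanishes in $\cand/(\cand \cap \cat{I}_a)$.

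The main work will lie in establishing fullness, and this is where I expect the argument to require the most care. Given a morphism $\phi: j_a^*X \lto j_a^*Y$ between images of $X, Y \in \cand$, I would represent it by a roof $X \xrto{s} Z \xlto{g} Y$ in $\cat{T}$ whose backward leg $s$ has cone some $I \in \cat{I}_a$, sitting in a triangle $Z \xlto{s} X \xlto{p} I \lto \Sigma Z$. Applying Lemma \ref{lemma:factorisation} to $p$ produces a factorisation $X \xlto{p'} I' \xlto{p''} I$ with $I' \in \cat{I}_a^\alpha \subseteq \cand$; extending $p'$ to a triangle $Z' \xlto{s'} X \xlto{p'} I' \lto \Sigma Z'$ yields $Z' \in \cand$ because $\cand$ is triangulated and contains both $X$ and $I'$. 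The octahedral axiom applied to $p = p'' \circ p'$ then produces a morphism $u: Z' \lto Z$ satisfying $s \circ u = s'$ and whose cone lies in $\cat{I}_a$, so that the new roof $X \xrto{s'} Z' \xlto{g \circ u} Y$ lies entirely in $\cand$, has cone of $s'$ equal to $I' \in \cand \cap \cat{I}_a$, and represents $\phi$ in $\cat{T}/\cat{I}_a$ (since $u$ becomes an isomorphism there). The delicate step is the octahedral bookkeeping; everything else reduces formally to Lemmas \ref{lemma:lifting} and \ref{lemma:factorisation}.
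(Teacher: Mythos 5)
Your proof is correct and takes essentially the same route as the paper: both rest on Lemma \ref{lemma:factorisation} together with an octahedral argument to replace the apex of a roof by an object of $\cand$ whose backward leg has cone in $\cat{I}_a^\alpha$, and on Lemma \ref{lemma:lifting} for essential surjectivity. You simply spell out the roof bookkeeping that the paper compresses into the words ``it now follows easily that \eqref{eq:equivalence_quotients} is fully faithful.''
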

\begin{proof}
The composite $\cand \xlto{\inc} \cat{T} \xlto{j_a^*} \cat{T}/\cat{I}_a$ factors, by definition, through the inclusion $(\cat{T}/\cat{I}_a)^\alpha \lto \cat{T}/\cat{I}_a$. The factorisation $\cand \lto (\cat{T}/\cat{I}_a)^\alpha$ vanishes on $\cand \cap \cat{I}_a$, and induces a functor $(\ref{eq:equivalence_quotients})$. To verify that $(\ref{eq:equivalence_quotients})$ is fully faithful, we use a standard argument. Let $s: X \lto Y$ be a morphism in $\cat{T}$ with cone in $\cat{I}_a$ and $Y \in \cand$. Extend to a triangle
\[
X \xlto{s} Y \xlto{f} I \xlto{+}.
\]
The map $f$ factors, by Lemma \ref{lemma:factorisation}, as $Y \lto I' \lto I$ with $I' \in \cat{I}_a^\alpha$. From the octahedral axiom, applied to the pair of morphisms in this factorisation, we obtain objects $C,D$ and triangles
\begin{align}
Y \lto I' \lto C \xlto{+},\\ 
I' \lto I \lto D \xlto{+},\\ 
C \lto \Sigma X \lto D \xlto{+}.
\end{align}
Since $I' \in \cat{I}_a^\alpha \subseteq \cand$ we find that $C$ belongs to $\cand$ and $D$ to $\cat{I}_a$. Thus $\Sigma^{-1} C \lto X$ is a morphism with domain in $\cand$, the composite of which with $s: X \lto Y$ has cone in $\cat{I}_a$. It now follows easily that (\ref{eq:equivalence_quotients}) is fully faithful. To see that it is surjective on objects, we use Lemma \ref{lemma:lifting}.
\end{proof}

Fix an index $a \in \{1,2\}$. By Lemma \ref{lemma:equivalence_quotients} the canonical functor $j_a^*: \cand \lto (\cat{T}/\cat{I}_a)^\alpha$ is a Verdier quotient which preserves $\alpha$-coproducts. By \cite[Lemma B.8]{Krause08} the (exact) restriction functor
\begin{gather*}
q_a^*: \add{\{(\cat{T}/\cat{I}_a)^\alpha\}} \lto \add{\{\cand\}},\\
q_a^*(F) = F \circ j_a^*
\end{gather*}
has an exact left adjoint ${q_a}*$. The right adjoint $q_a^*$ is fully faithful, so ${q_a}*$ is a Gabriel localisation of $\add{\{\cand\}}$. As we will see, it is reasonable to think of the pair of localisations
\[
\xymatrix@C+2pc{
&  \add{\{(\cat{T}/\cat{I}_1)^\alpha\}}\\
\add{\{\cand\}} \ar@/^1.3pc/[ur]^(0.4){{q_1}_*} \ar@/_1.3pc/[dr]_(0.4){{q_2}_*}\\
& \add{\{(\cat{T}/\cat{I}_2)^\alpha\}}
}
\]
as a covering of $\add{\{\cand\}}$. To make this precise, we show that a functor $F$ which is sent to zero by both localisations, must already be zero.

\begin{lemma}\label{lemma:intersection_of_kernels} $\Ker({q_1}_*) \cap \Ker({q_2}_*) = 0$.
\end{lemma}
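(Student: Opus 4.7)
The plan is to verify $F(X) = 0$ for every $X \in \cand$, whence $F = 0$ by Yoneda in $\add{\{\cand\}}$. The key tool is the elementwise characterization of the Gabriel torsion subcategory $\cat{T}_a := \Ker({q_a}_*)$: $F \in \cat{T}_a$ if and only if for every $X \in \cand$ and every $\xi \in F(X)$ there is a morphism $s: X' \lto X$ in $\cand$ whose cone lies in $\cand \cap \cat{I}_a$ and which satisfies $F(s)(\xi) = 0$. This follows from the construction of ${q_a}_*$ as the left Kan extension along the Verdier quotient $j_a^*: \cand \lto (\cat{T}/\cat{I}_a)^\alpha$, combined with the calculus of fractions available there (compare \cite[Lemma B.8]{Krause08}).

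Granted this, fix $F \in \cat{T}_1 \cap \cat{T}_2$, $X \in \cand$ and $\xi \in F(X)$. For each $a \in \{1,2\}$ apply the characterization to produce $s_a: X_a \lto X$ in $\cand$ with cone $C_a \in \cand \cap \cat{I}_a$ and $F(s_a)(\xi) = 0$; complete to the triangle $X_a \xlto{s_a} X \xlto{v_a} C_a \xlto{+}$. Form $v := (v_1, v_2): X \lto C_1 \oplus C_2$ and its fibre triangle $Q \xlto{u} X \xlto{v} C_1 \oplus C_2 \xlto{+}$. Since $v_a \circ u = 0$ for each $a$, the morphism $u$ factors through both $s_a$, so $F(u)(\xi) = 0$. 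The cohomological functor $\cat{T}(-, ?)|_\cand$ sends this triangle to an exact sequence in $\add{\{\cand\}}$, and a careful application of the left-exact functor $\Hom(-, F)$ combined with additivity of $F$ expresses $\xi$ in the form $\xi = F(v_1)(\eta_1) + F(v_2)(\eta_2)$ with $\eta_a \in F(C_a)$ (using that $\alpha > \aleph_0$ forces $F$ to preserve finite biproducts).

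To finish, I show each $\eta_a = 0$. Take $a = 1$; since $F \in \cat{T}_2$, the characterization supplies $t: D \lto C_1$ in $\cand$ with cone $K \in \cand \cap \cat{I}_2$ and $F(t)(\eta_1) = 0$. In the defining triangle $D \xlto{t} C_1 \lto K \xlto{+}$, the connecting map $C_1 \lto K$ lies in $\Hom(\cat{I}_1, \cat{I}_2) = 0$ by the orthogonality $\cat{I}_2 \subseteq \cat{I}_1^{\perp}$, so the triangle splits and identifies $t$ with the projection $C_1 \oplus \Sigma^{-1}K \lto C_1$. Additivity of $F$ then computes $F(t)(\eta_1) = (\eta_1, 0) \in F(C_1) \oplus F(\Sigma^{-1}K)$, forcing $\eta_1 = 0$. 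Symmetrically $\eta_2 = 0$, so $\xi = 0$.

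The main obstacle is the kernel characterization of $\cat{T}_a$, together with the step that lifts $\xi$ through the fibre triangle to elements of $F(C_1)$ and $F(C_2)$; once these are in place, the orthogonality of $\cat{I}_1$ and $\cat{I}_2$ dispatches the rest by forcing the corrective triangles for the $\eta_a$ to split cleanly, so that each $\eta_a$ is killed by the kernel condition coming from the \emph{other} $\cat{T}_b$.
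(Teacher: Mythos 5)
Your argument contains a genuine gap, and it is worth locating it precisely because the paper avoids it by using a stronger form of the kernel characterization.

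First, the ``elementwise characterization'' you state for $\Ker({q_a}_*)$ is not what \cite[Lemma B.8]{Krause08} gives. The correct statement (and the one the paper uses) is that $F$ lies in $\Ker({q_a}_*)$ if and only if every $\xi \in F(C)$, regarded as a morphism $\cand(-,C) \to F$, \emph{factors} as $\cand(-,C) \xlto{\cand(-,\gamma)} \cand(-,C') \to F$ for some $\gamma: C \to C'$ with $j_a^*(\gamma) = 0$; in elementwise terms, $\xi = F(\gamma)(\eta)$ for some such $\gamma$ and some $\eta \in F(C')$. Your condition — that $F(s)(\xi) = 0$ for some $s: X' \to C$ with cone in $\cand \cap \cat{I}_a$ — is a \emph{consequence} of this (choose $s$ with $\gamma s = 0$), but it is strictly weaker, precisely because $F$ is only an $\alpha$-product-preserving additive functor, not a cohomological one; one cannot run the implication backwards.

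This weakening is exactly what breaks the next step. From the triangle $Q \xlto{u} X \xlto{v} C_1 \oplus C_2 \xlto{+}$ and $F(u)(\xi) = 0$, left-exactness of $\Hom(-,F)$ only yields that $\xi$ factors through the image $L = \operatorname{im}\bigl(\cand(-,v)\bigr) \subseteq \cand(-, C_1 \oplus C_2)$. To get $\xi = F(v)(\eta)$ with $\eta \in F(C_1 \oplus C_2)$ you would need to extend the induced map $L \to F$ along the inclusion $L \hookrightarrow \cand(-, C_1 \oplus C_2)$, and there is no reason this extension exists: $F$ is not injective, and the restricted Yoneda image of a triangle is not sent by $F$ to an exact sequence. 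So the $\eta_a$ in your proof need not exist, and the orthogonality step, while fine in itself, has nothing to apply to. The paper's proof sidesteps this entirely: the Krause characterization hands you the factorization directly, so from $F \in \Ker({q_1}_*)$ one gets $\xi = F(\gamma_1)(\eta_1)$ with $\gamma_1: C \to I_1$, $I_1 \in \cand \cap \cat{I}_1$; applying the $\Ker({q_2}_*)$ condition to $\eta_1$ gives a further factorization through some $I_2 \in \cand \cap \cat{I}_2$, and $\Hom(I_1, I_2) = 0$ by orthogonality kills everything.
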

\begin{proof}
Fix $a \in \{1,2\}$. By \cite[Lemma B.8]{Krause08} a functor $F \in \add{\{\cand\}}$ belongs to $\Ker({q_a}_*)$ if and only if for any $C \in \cand$, every morphism $\cand(-,C) \lto F$ factors via $\cand(-,\gamma): \cand(-,C) \lto \cand(-,C')$ for some morphism $\gamma: C \lto C'$ in $\cand$ with $j_a^*(\gamma) = 0$ in $\cat{T}/\cat{I}_a$. From Lemma \ref{lemma:equivalence_quotients} we deduce that $j_a^*(\gamma) = 0$ if and only if $\gamma$ factors, in $\cand$, via an object of $\cand \cap \cat{I}_a$. We conclude that $F$ belongs to $\Ker({q_a}_*)$ if and only if every morphism $\cand(-,C) \lto F$ factors via $\cand(-,I)$ for some $I \in \cand \cap \cat{I}_a$.

Assume now that $F$ belongs to $\Ker({q_1}_*) \cap \Ker({q_2}_*)$ and let $x: \cand(-,C) \lto F$ be any morphism. By the above, this must factor as $\cand(-,C) \lto \cand(-,I) \lto F$ for some $I \in \cand \cap \cat{I}_1$. Since $F$ also belongs to $\Ker({q_2}_*)$, the morphism $\cand(-,I) \lto F$ factors as $\cand(-,I) \lto \cand(-,I') \lto F$ for some $I' \in \cand \cap \cat{I}_2$. But since $\cat{I}_1$ and $\cat{I}_2$ are orthogonal, the morphism $\cand(-,I) \lto \cand(-,I')$ vanishes, and we conclude that $x = 0$. It follows that $F = 0$, as claimed.
\end{proof}

\begin{proposition}\label{prop:cand_is_perfect} $\cand$ is an $\alpha$-perfect class of $\alpha$-small objects in $\cat{T}$.
\end{proposition}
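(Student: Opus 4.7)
The plan is to invoke Neeman's characterization \cite[Theorem 1.8]{NeemanBook}, which says that $\cand$ is an $\alpha$-perfect class of $\alpha$-small objects if and only if the canonical exact functor $\pi: A(\cat{T}) \to \add{\cand}$ preserves coproducts. The whole apparatus of the section has been set up precisely so that this coproduct-preservation becomes a local question with respect to the ``cover'' $\{{q_1}_*, {q_2}_*\}$ of $\add{\cand}$.

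First I would observe that ${q_1}_*$ and ${q_2}_*$ jointly reflect isomorphisms: given a morphism $\varphi$ in $\add{\cand}$ such that ${q_a}_*(\varphi)$ is an isomorphism for $a \in \{1,2\}$, the exactness of ${q_a}_*$ forces $\Ker(\varphi)$ and $\Coker(\varphi)$ to lie in $\Ker({q_1}_*) \cap \Ker({q_2}_*)$, which vanishes by Lemma \ref{lemma:intersection_of_kernels}. Applied to the canonical comparison morphism $\bigoplus_i \pi(X_i) \to \pi(\bigoplus_i X_i)$ in $\add{\cand}$, this reduces the problem to showing that ${q_a}_* \circ \pi$ preserves coproducts for each $a \in \{1,2\}$.

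The key step is to identify ${q_a}_* \circ \pi$ with $\pi_a \circ A(j_a^*)$, where $\pi_a: A(\cat{T}/\cat{I}_a) \to \add{\{(\cat{T}/\cat{I}_a)^\alpha\}}$ is the universal exact functor for $(\cat{T}/\cat{I}_a)^\alpha$ and $A(j_a^*)$ is the exact functor induced on abelianisations. The pointwise maps $j_a^*: \cat{T}(C, X) \to (\cat{T}/\cat{I}_a)(j_a^* C, j_a^* X)$ give a natural transformation $\pi \to q_a^* \pi_a A(j_a^*)$ in $\add{\cand}$, and its adjoint under ${q_a}_* \dashv q_a^*$ is the desired comparison ${q_a}_* \pi \to \pi_a A(j_a^*)$. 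To prove it is an isomorphism I would reduce by exactness to the case of representables $X \in \cat{T} \subseteq A(\cat{T})$, and unpack the adjunction using Lemmas \ref{lemma:equivalence_quotients} and \ref{lemma:lifting}: these identify $\cand / (\cand \cap \cat{I}_a)$ with $(\cat{T}/\cat{I}_a)^\alpha$ via $j_a^*$, which lets one translate natural transformations out of $\cat{T}(-,X)|_{\cand}$ valued in $G \circ j_a^*$ into natural transformations out of $(\cat{T}/\cat{I}_a)(-, j_a^* X)|_{(\cat{T}/\cat{I}_a)^\alpha}$ valued in $G$.

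Granted this identification, the conclusion is immediate: hypothesis (1) plus Neeman's Theorem 1.8 applied to the $\alpha$-compactly generated category $\cat{T}/\cat{I}_a$ guarantee that $\pi_a$ preserves coproducts, and $A(j_a^*)$ preserves coproducts because $j_a^*$ does, so ${q_a}_* \pi \cong \pi_a A(j_a^*)$ preserves coproducts. The main obstacle I expect is establishing the identification ${q_a}_* \pi \cong \pi_a A(j_a^*)$ on representables: this requires threading the Gabriel localization ${q_a}_*$ through the Verdier quotient description of morphisms in $\cat{T}/\cat{I}_a$, and genuinely uses both the essential surjectivity of $j_a^*: \cand \to (\cat{T}/\cat{I}_a)^\alpha$ from Lemma \ref{lemma:lifting} and the fully-faithfulness modulo $\cat{I}_a$ from Lemma \ref{lemma:equivalence_quotients}. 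Everything else in the argument is formal.
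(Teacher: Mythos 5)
Your decomposition matches the paper's exactly: invoke \cite[Theorem 1.8]{NeemanBook} to reduce to coproduct-preservation of $\pi$, use Lemma~\ref{lemma:intersection_of_kernels} so that ${q_1}_*$ and ${q_2}_*$ jointly detect vanishing, and establish the commutative square ${q_a}_*\pi \cong \pi_a \circ A(j_a^*)$. In fact the comparison map you build is literally the one the paper uses: writing $\rho, \rho_a$ for the restricted Yoneda functors, under the adjunction isomorphism $q_a^*\rho_a \cong \rho\,{j_a}_*$ the ${q_a}_*$-adjoint of your pointwise $j_a^*$-transformation $\rho \to q_a^*\rho_a\,j_a^*$ is precisely $({q_a}_*\rho)\eta \colon {q_a}_*\rho \to {q_a}_*\rho\,{j_a}_*j_a^* \cong \rho_a j_a^*$, the map the paper constructs from the unit $\eta$ of $j_a^* \dashv {j_a}_*$.

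Where the two accounts diverge is in how one checks this map is an isomorphism on representables. You propose to identify natural transformations out of $\cat{T}(-,X)|_{\cand}$ valued in $G\circ j_a^*$ with natural transformations out of $(\cat{T}/\cat{I}_a)(-,j_a^*X)$ valued in $G$, citing Lemmas~\ref{lemma:equivalence_quotients} and~\ref{lemma:lifting}. This is not simply Yoneda: for $X \in \cat{T}$ not in $\cand$ the presheaf $\cat{T}(-,X)|_{\cand}$ is not representable in $\add{\{\cand\}}$, and the comparison $\cat{T}(C,X)\to(\cat{T}/\cat{I}_a)(j_a^*C,j_a^*X)$ is in general neither injective nor surjective, so both descending and extending a natural transformation across it require a roof calculation. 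The ingredient your sketch omits, and the one that actually does this work, is Lemma~\ref{lemma:factorisation}: it lets you show that any natural transformation out of $\cat{T}(-,X)|_{\cand}$ valued in $G\circ j_a^*$ kills morphisms factoring through $\cat{I}_a$, since such a morphism from $C \in \cand$ already factors through $\cat{I}_a^{\alpha}\subseteq\cand$ and $G\circ j_a^*$ vanishes there. The paper (following Krause) packages the same content more efficiently: Lemma~\ref{lemma:factorisation} together with \cite[Lemma B.8]{Krause08} gives that the homological functor ${q_a}_*\rho$ vanishes on all of $\cat{I}_a$, and since the cone of $\eta_X$ lies in $\cat{I}_a$ the map $({q_a}_*\rho)(\eta_X)$ is automatically an isomorphism, with no explicit calculus of fractions. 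So the proposal is the right argument with the right skeleton; you should pull in Lemma~\ref{lemma:factorisation} and you may prefer the unit-of-adjunction phrasing to avoid the roof bookkeeping you correctly flag as the obstacle.
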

\begin{proof} We have $\alpha$-localising subcategories $\cat{T}^{|\alpha|} \subseteq \cat{T}, (\cat{T}/\cat{I}_1)^\alpha \subseteq \cat{T}/\cat{I}_1$ and $(\cat{T}/\cat{I}_2)^\alpha \subseteq \cat{T}/\cat{I}_2$ and, as discussed at the beginning of this section, there are canonical exact functors
\begin{gather*}
\pi: A(\cat{T}) \lto \add{\{\cand\}},\\
\pi_1: A(\cat{T}/\cat{I}_1) \lto \add{\{(\cat{T}/\cat{I}_1)^\alpha\}},\\
\pi_2: A(\cat{T}/\cat{I}_2) \lto \add{\{(\cat{T}/\cat{I}_2)^\alpha\}}.
\end{gather*}
We claim that for $a \in \{1,2\}$ the diagram
\begin{equation}\label{eq:cand_is_perfect_0}
\xymatrix@C+2pc{
A(\cat{T}) \ar[d]_{\pi} \ar[r]^{A(j_a^*)} & A(\cat{T}/\cat{I}_a) \ar[d]^{\pi_a}\\
\add{\{\cand\}} \ar[r]_{{q_a}_*} & \add{\{(\cat{T}/\cat{I}_a)^\alpha\}}
}
\end{equation}
commutes up to natural equivalence, where $A(j_a^*)$ is the induced functor between the abelianisations. By the universal property of the abelianisations, it suffices to prove that the related diagram
\begin{equation}\label{eq:cand_is_perfect}
\xymatrix@C+2pc{
\cat{T} \ar[d]_{\rho} \ar[r]^{j_a^*} & \cat{T}/\cat{I}_a \ar[d]^{\rho_a}\\
\add{\{\cand\}} \ar[r]_{{q_a}_*} & \add{\{(\cat{T}/\cat{I}_a)^\alpha\}}
}
\end{equation}
commutes, where $\rho$ and $\rho_a$ are the restricted Yoneda functors. To do this, we recycle an argument of Krause from the proof of \cite[Theorem 6.3]{Krause08}. The first thing to observe is that the composite ${q_a}_* \circ \rho$ vanishes on $\cat{I}_a$: one uses the description in \cite[Lemma B.8]{Krause08} of the kernel of ${q_a}_*$, together with Lemma \ref{lemma:factorisation}. For $C \in \cand$ and $X \in \cat{T}/\cat{I}_a$ there is an adjunction isomorphism
\[
\cat{T}/\cat{I}_a(j_a^* C, X) \cong \cat{T}(C, {j_a}_* X),
\]
and it follows that there is a natural equivalence $q_a^* \circ \rho_a \cong \rho \circ {j_a}_*$. Composing with ${q_a}_*$ we obtain a natural equivalence $\rho_a \cong {q_a}_* \circ q_a^* \circ \rho_a \cong {q_a}_* \circ \rho \circ {j_a}_*$ and consequently $\rho_a \circ j_a^* \cong {q_a}_* \circ \rho \circ {j_a}_* \circ j_a^*$. From the unit $\eta: 1 \lto {j_a}_* \circ j_a^*$ we obtain a natural transformation
\[
\xymatrix@C+2pc{
{q_a}_* \circ \rho \ar[r]^(0.3){({q_a}_* \circ \rho) \eta} & {q_a}_* \circ \rho \circ {j_a}_* \circ j_a^* \cong \rho_a \circ j_a^*.
}
\]
This is the desired natural equivalence, because for every $X \in \cat{T}$ the cone of $\eta_X: X \lto {j_a}_* j_a^*(X)$ is an object of $\cat{I}_a$, on which ${q_a}_* \circ \rho$ vanishes.

%To do this, we recycle an argument of Krause from the proof of \cite[Theorem 6.3]{Krause08}. The first thing to observe is that the composite ${q_a}_* \circ \rho$ vanishes: one uses the description in \cite[Lemma B.8]{Krause08} of the kernel of ${q_a}_*$, together with Lemma \ref{lemma:factorisation}. We deduce the existence of a functor $\rho'_a: \cat{T}/\cat{I}_a \lto \add{\{(\cat{T}/\cat{I}_a)^\alpha\}}$ which \emph{does} make (\ref{eq:cand_is_perfect}) commute. Let $J: (\cat{T}/\cat{I}_a)^\alpha \lto \cat{T}/\cat{I}_a$ denote the inclusion. 
%There is thus an isomorphism
%\[
%\rho'_a \cong \rho'_a \circ j_a^* \circ {j_a}_* = {q_a}_* \circ \rho \circ {j_a}_* \cong q_* \circ q^* \circ \rho_a \cong \rho_a,
%\]
%from which we conclude that (\ref{eq:cand_is_perfect}), and therefore also (\ref{eq:cand_is_perfect_0}), commutes up to natural equivalence.

Since $(\cat{T}/\cat{I}_a)^\alpha$ is an $\alpha$-perfect class of $\alpha$-small objects, we infer from \cite[Theorem 1.8]{NeemanBook} that $\pi_a$ preserves coproducts. Let $\{ x_\lambda \}_{\lambda}$ be a family of objects in $A(\cat{T})$, and let $\xi: \bigoplus_\lambda \pi(x_\lambda) \lto \pi(\bigoplus_\lambda x_\lambda)$ be the canonical morphism in $\add{\{\cand\}}$. Extend on both sides to an exact sequence
\[
0 \lto \Ker(\xi) \lto \bigoplus_\lambda \pi(x_\lambda) \xlto{\xi} \pi\Big( \bigoplus_\lambda x_\lambda \Big) \lto \Coker(\xi) \lto 0,
\]
which maps under ${q_a}_*$ to an exact sequence
\[
0 \lto {q_a}_*\Ker(\xi) \lto {q_a}_*\bigoplus_\lambda \pi(x_\lambda) \xlto{{q_a}_*(\xi)} {q_a}_*\pi\Big( \bigoplus_\lambda x_\lambda \Big) \lto {q_a}_*\Coker(\xi) \lto 0.
\]
Here is where we use commutativity of (\ref{eq:cand_is_perfect_0}). Both $\pi_a$ and $A(j_a^*)$ preserve coproducts, whence ${q_a}_* \circ \pi \cong \pi_a \circ A(j_a^*)$ preserves coproducts. Since ${q_a}_*$ preserves coproducts (it has a right adjoint), we conclude that ${q_a}_*(\xi)$ is an isomorphism, and thus ${q_a}_*\Ker(\xi)$ and ${q_a}_*\Coker(\xi)$ both vanish. Since $a \in \{1,2\}$ was arbitrary, it follows from Lemma \ref{lemma:intersection_of_kernels} that $\Ker(\xi) = \Coker(\xi) = 0$, whence $\xi$ is an isomorphism and $\pi$ preserves coproducts. By \cite[Theorem 1.8]{NeemanBook}, $\cat{T}^{|\alpha|}$ is an $\alpha$-perfect class of $\alpha$-small objects.
\end{proof}

\begin{proof}[Proof of Proposition \ref{prop:main_theorem_1}]
First we prove that $\cand$ is an $\alpha$-compact generating set\footnote{Strictly speaking $\cand$ is an essentially small class, not a set, but let us replace $\cand$ by a representative set of objects and ignore the distinction.} for $\cat{T}$, in the sense of \cite[Definition 8.1.6]{NeemanBook}. In light of Proposition \ref{prop:cand_is_perfect}, it suffices to prove that if an object $x \in \cat{T}$ satisfies $\cat{T}(y,x) = 0$ for all $y \in \cand$ then $x = 0$. Note that $\cat{I}_1^\alpha \subseteq \cand$, so $x \in (\cand)^{\perp} \subseteq (\cat{I}_1^\alpha)^{\perp} = \cat{I}_1^{\perp}$, since $\langle \cat{I}_1^\alpha \rangle = \cat{I}_1$. Let $t \in (\cat{T}/\cat{I}_1)^\alpha$ be given, and choose by Lemma \ref{lemma:lifting} a $t' \in \cand$ with $j_1^*(t') \cong t$. Then
\[
0 = \cat{T}(t', x) \cong \cat{T}/\cat{I}_1( j_1^*t', j_1^*x) \cong \cat{T}/\cat{I}_1(t, j_1^*x).
\]
But $\cat{T}/\cat{I}_1$ is $\alpha$-compactly generated and $t$ was arbitrary, so $j_1^* x = 0$. Hence $x$ belongs to both $\cat{I}_1$ and $\cat{I}_1^{\perp}$, which is only possible if $x = 0$. It now follows from \cite[Proposition 8.4.2]{NeemanBook} that $\cat{T} = \langle \cand \rangle$, and from \cite[Theorem 4.4.9]{NeemanBook} that $\cat{T}^\alpha$ is the smallest $\alpha$-localising subcategory of $\cat{T}$ containing $\cand$. Hence $\cat{T}^\alpha = \cand$, which settles the first statement of the theorem. 

The second statement of the theorem deals with a Bousfield subcategory $\cat{S}$. The intersections $\cat{S} \cap \cat{I}_1, \cat{S} \cap \cat{I}_2$ are orthogonal Bousfield subcategories of $\cat{S}$. We want to apply the first part of the theorem to $\cat{S}$ and this pair of subcategories. Condition $(1)$ is certainly satisfied, since by hypothesis $(3)$ the quotients $\cat{S}/(\cat{S} \cap \cat{I}_a)$ are $\alpha$-compactly generated. For condition $(2)$ we must show that $\cat{S} \cap \cat{I}_a$ is $\alpha$-compactly generated in $\cat{S}/(\cat{S} \cap \cat{I}_b)$ for $\{a,b\} = \{1,2\}$. It follows from hypothesis $(4)$ that
\begin{equation}
(\cat{S} \cap \cat{I}_a)^\alpha = (\cat{S} \cap \cat{I}_a) \cap (\cat{T}/\cat{I}_b)^\alpha
\end{equation}
and from hypothesis $(3)$ that
\begin{equation}\label{eq:proof_of_theorem_5}
\Big( \cat{S}/(\cat{S} \cap \cat{I}_b) \Big)^\alpha = \Big( \cat{S}/(\cat{S} \cap \cat{I}_b) \Big) \cap (\cat{T}/\cat{I}_b)^\alpha.
\end{equation}
This implies that the inclusion $\cat{S} \cap \cat{I}_a \lto \cat{S}/(\cat{S} \cap \cat{I}_b)$ preserves $\alpha$-compactness, from which we deduce that the former category is $\alpha$-compactly generated in the latter. Now, using the first part of the theorem, we conclude that $\cat{S}$ is $\alpha$-compactly generated and that an object $X \in \cat{S}$ is $\alpha$-compact in $\cat{S}$ if and only if the image under $\cat{S} \lto \cat{S}/(\cat{S} \cap \cat{I}_a)$ is $\alpha$-compact for each $a \in \{1,2\}$. By (\ref{eq:proof_of_theorem_5}) these are precisely the $X \in \cat{S}$ that are $\alpha$-compact in $\cat{T}$, so $\cat{S}$ is $\alpha$-compactly generated in $\cat{T}$.
\end{proof}

\section{Proof of the Theorem}\label{section:proof}

Let us briefly recall the setup of Theorem \ref{theorem:main_theorem_general}. We are given a triangulated category $\cat{T}$ with coproducts, a regular cardinal $\alpha$, and a cocovering $\cat{F} = \{ \cat{I}_1,\ldots, \cat{I}_n \}$ satisfying some conditions $(1),(2)$, and we wish to prove that $\cat{T}$ is $\alpha$-compactly generated. Once again, since the $\alpha = \aleph_0$ case is handled by \cite[Theorem 5.15]{Rouquier08}, we restrict to the case $\alpha > \aleph_0$. In what follows we make implicit use of the properties of proper intersection described in Appendix \ref{section:proper_intersection}, particularly Lemma \ref{lemma:intersecting_bousfield}.

\begin{proof}[Proof of Theorem \ref{theorem:main_theorem_general}]
The proof is by induction on the number $n \ge 2$ of elements in the cocover $\cat{F}$ (to be clear, the induction includes the second statement of the theorem, about $\cat{S}$). The $n = 2$ case is given by Proposition \ref{prop:main_theorem_1}, and for the inductive step the argument is identical to the inductive step in the proof of \cite[Theorem 5.15]{Rouquier08}. For the reader's convenience, let us repeat the argument here. Assume that $n > 2$ and set
\[
\cat{I}_{\cap} = \cat{I}_2 \cap \cdots \cap \cat{I}_n.
\]
Then $\{ \cat{I}_1, \cat{I}_{\cap} \}$ is an orthogonal pair of Bousfield subcategories of $\cat{T}$. By hypothesis $\cat{T}/\cat{I}_1$ is $\alpha$-compactly generated, and $\cat{I}_{\cap}$ is $\alpha$-compactly generated in $\cat{T}/\cat{I}_{1}$, so in order to apply the $n = 2$ case of the Theorem to the pair $\cat{I}_1, \cat{I}_{\cap}$ it remains to check that
\begin{itemize}
\item[(i)] $\cat{T}/\cat{I}_{\cap}$ is $\alpha$-compactly generated, and
\item[(ii)] $\cat{I}_1$ is $\alpha$-compactly generated in $\cat{T}/\cat{I}_{\cap}$.
\end{itemize}
Set $\overline{\cat{T}} = \cat{T}/\cat{I}_{\cap}$ and for $\cat{I} \in \cat{F}$ define $\overline{\cat{I}} = \cat{I}/(\cat{I} \cap \cat{I}_{\cap})$. This is a Bousfield subcategory of $\overline{\cat{T}}$, and $\{ \overline{\cat{I}_2}, \ldots, \overline{\cat{I}_n} \}$ is a cocovering of $\overline{\cat{T}}$ (Lemma \ref{lemma:intersecting_bousfield}). Moreover:
\begin{itemize}
\item For $\cat{I} \in \cat{F} \setminus \{ \cat{I}_1 \}$ the category $\overline{\cat{T}}/\overline{\cat{I}} \cong \cat{T}/\cat{I}$ is $\alpha$-compactly generated.
\item For $\cat{I} \in \cat{F} \setminus \{ \cat{I}_1 \}$ and a nonempty subset $\cat{F}' \subseteq \cat{F} \setminus \{ \cat{I}, \cat{I}_1 \}$ the image of the canonical functor
\[
\bigcap_{\cat{I}' \in \cat{F}'} \overline{\cat{I}'} \xlto{\inc} \overline{\cat{T}} \xlto{\can} \overline{\cat{T}}/\overline{\cat{I}} \cong \cat{T}/\cat{I}
\]
is just the essential image of the composite
\[
\bigcap_{\cat{I}' \in \cat{F}'} \cat{I}' \xlto{\inc} \cat{T} \xlto{\can} \cat{T}/\cat{I},
\]
which is, by hypothesis, $\alpha$-compactly generated in $\cat{T}/\cat{I}$.
\end{itemize}
From the inductive hypothesis, we deduce that $\overline{\cat{T}}$ is $\alpha$-compactly generated, and that $X \in \overline{\cat{T}}$ is $\alpha$-compact if and only if the images of $X$ in $\overline{\cat{T}}/\overline{\cat{I}} \cong \cat{T}/\cat{I}$ are $\alpha$-compact for each $\cat{I} \in \cat{F} \setminus \{ \cat{I}_1 \}$. This verifies condition $(i)$ above, and it remains to check $(ii)$.

Identify $\cat{I}_1$ as a subcategory of $\overline{\cat{T}}$ via the embedding $\cat{I}_1 \lto \cat{T} \lto \cat{T}/\cat{I}_{\cap}$. Then $\cat{I}_1$ is a Bousfield subcategory, properly intersecting $\overline{\cat{I}}$ for $\cat{I} \in \cat{F} \setminus \{ \cat{I}_1 \}$. Moreover:
\begin{itemize}
\item For $\cat{I} \in \cat{F} \setminus \{ \cat{I}_1 \}$ the subcategory $\cat{I}_1/(\cat{I}_1 \cap \overline{\cat{I}})$ of $\overline{\cat{T}}/\overline{\cat{I}}$ is identified, under the equivalence $\overline{\cat{T}}/\overline{\cat{I}} \cong \cat{T}/\cat{I}$, with $\cat{I}_1/(\cat{I}_1 \cap \cat{I})$, which is $\alpha$-compactly generated in $\cat{T}/\cat{I}$ by hypothesis.
\item For every $\cat{I} \in \cat{F} \setminus \{ \cat{I}_1 \}$ and nonempty subset $\cat{F}' \subseteq \cat{F} \setminus \{ \cat{I}, \cat{I}_1 \}$ the image of
\[
\cat{I}_1 \cap \bigcap_{\cat{I}' \in \cat{F}'} \overline{\cat{I}'} \xlto{\inc} \overline{\cat{T}} \xlto{\can} \overline{\cat{T}}/\overline{\cat{I}} \cong \cat{T}/\cat{I}
\]
is just the essential image of the composite
\[
\cat{I}_1 \cap \bigcap_{\cat{I}' \in \cat{F}'} \cat{I}' \xlto{\inc} \cat{T} \xlto{\can} \cat{T}/\cat{I}
\]
which is, by hypothesis, $\alpha$-compactly generated in $\cat{T}/\cat{I}$.
\end{itemize}
From the inductive hypothesis (with $\cat{S} = \cat{I}_1$) we conclude that $\cat{I}_1$ is $\alpha$-compactly generated in $\overline{\cat{T}}$. Having now established both $(i)$ and $(ii)$ above, we deduce from the $n = 2$ case of the Theorem that $\cat{T}$ is $\alpha$-compactly generated, and that $X \in \cat{T}$ is $\alpha$-compact if and only if $X$ is $\alpha$-compact in both $\cat{T}/\cat{I}_1$ and $\overline{\cat{T}}$. But the image of $X$ in $\overline{\cat{T}}$ is $\alpha$-compact if and only if the images of $X$ in $\overline{\cat{T}}/\overline{\cat{I}} \cong \cat{T}/\cat{I}$ are $\alpha$-compact for $\cat{I} \in \cat{F} \setminus \{ \cat{I}_1 \}$, which gives the desired criterion for $\alpha$-compactness in $\cat{T}$.

To complete the inductive step, it remains to treat the second statement: we are given a Bousfield subcategory $\cat{S}$ properly intersecting every $\cat{I} \in \cat{F}$, satisfying conditions $(3),(4)$. By hypothesis, then, $\cat{S}/(\cat{S} \cap \cat{I}_1)$ and $\cat{I}_{\cap} \cap \cat{S}$ are $\alpha$-compactly generated in $\cat{T}/\cat{I}_1$, and to apply the $n = 2$ case of the Theorem to $\cat{S}$ and the cocover $\{ \cat{I}_1, \cat{I}_{\cap} \}$ it remains to check that
\begin{itemize}
\item[(i)'] $\cat{S}/(\cat{S} \cap \cat{I}_{\cap})$ is $\alpha$-compactly generated in $\overline{\cat{T}}$, and
\item[(ii)'] $\cat{I}_1 \cap \cat{S}$ is $\alpha$-compactly generated in $\overline{\cat{T}}$.
\end{itemize}
Set $\overline{\cat{S}} = \cat{S}/(\cat{S} \cap \cat{I}_{\cap})$. This is a Bousfield subcategory of $\overline{\cat{T}}$ properly intersecting every element of the cocovering $\{\overline{\cat{I}_2},\ldots,\overline{\cat{I}_n}\}$ of $\overline{\cat{T}}$. Moreover:
\begin{itemize}
\item For $\cat{I} \in \cat{F} \setminus \{ \cat{I}_1 \}$ the subcategory $\overline{\cat{S}}/(\overline{\cat{S}} \cap \overline{\cat{I}})$ of $\overline{\cat{T}}/\overline{\cat{I}}$ is identified under the equivalence $\overline{\cat{T}}/\overline{\cat{I}} \cong \cat{T}/\cat{I}$ with the subcategory $\cat{S}/(\cat{S} \cap \cat{I})$, which is $\alpha$-compactly generated in $\cat{T}/\cat{I}$ by hypothesis.
\item For every $\cat{I} \in \cat{F} \setminus \{ \cat{I}_1 \}$ and nonempty subset $\cat{F}' \subseteq \cat{F} \setminus \{ \cat{I}, \cat{I}_1 \}$ the image of
\[
\overline{\cat{S}} \cap \bigcap_{\cat{I}' \in \cat{F}'} \overline{\cat{I}'} \xlto{\inc} \overline{\cat{T}} \xlto{\can} \overline{\cat{T}}/\overline{\cat{I}} \cong \cat{T}/\cat{I}
\]
is just the essential image of the composite
\[
\cat{S} \cap \bigcap_{\cat{I}' \in \cat{F}'} \cat{I}' \xlto{\inc} \cat{T} \xlto{\can} \cat{T}/\cat{I},
\]
which is, by hypothesis, $\alpha$-compactly generated in $\cat{T}/\cat{I}$.
\end{itemize}
From the inductive hypothesis, we conclude that $\overline{\cat{S}}$ is $\alpha$-compactly generated in $\overline{\cat{T}}$, which is $(i)'$ above. A similar argument verifies $(ii)'$, and from the $n = 2$ case of the Theorem we conclude that $\cat{S}$ is $\alpha$-compactly generated in $\cat{T}$. This completes the inductive step, and thus the proof.
\end{proof}

\begin{corollary}\label{corollary:alpha_compactness_local} In the situation of Theorem \ref{theorem:main_theorem_general}, for any regular cardinal $\beta \ge \alpha$ an object $X \in \cat{T}$ is $\beta$-compact if and only if the image of $X$ is $\beta$-compact in $\cat{T}/\cat{I}$ for every $\cat{I} \in \cat{F}$.
\end{corollary}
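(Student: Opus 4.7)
The plan is to verify that the hypotheses (1) and (2) of Theorem \ref{theorem:main_theorem_general} continue to hold with $\beta$ in place of $\alpha$, and then invoke the theorem a second time at the cardinal $\beta$. The key underlying fact is monotonicity of compactness in the cardinal parameter: for any triangulated category $\cat{R}$ with coproducts and any regular $\beta \ge \alpha$, one has $\cat{R}^\alpha \subseteq \cat{R}^\beta$. This follows directly from the definitions, since every $\alpha$-small object is trivially $\beta$-small and every $\alpha$-perfect class is $\beta$-perfect (see \cite[Ch.~3--4]{NeemanBook}); equivalently, the same statement is a consequence of Theorem \ref{theorem:thomason_neeman} applied to the trivial localising subcategory $\cat{R} \subseteq \cat{R}$, considered as $\alpha$- and then $\beta$-compactly generated in itself.

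Granting this, the verification is routine. By hypothesis $\cat{T}/\cat{I}$ is $\alpha$-compactly generated, so there is a set $S \subseteq (\cat{T}/\cat{I})^\alpha$ with $\langle S \rangle = \cat{T}/\cat{I}$. Since $(\cat{T}/\cat{I})^\alpha \subseteq (\cat{T}/\cat{I})^\beta$ and the larger class is essentially small (the category being well-generated), we deduce that $\cat{T}/\cat{I}$ is $\beta$-compactly generated, establishing (1) for $\beta$. For (2), let $\cat{J}$ denote the essential image of the composite $\bigcap_{\cat{I}' \in \cat{F}'} \cat{I}' \xlto{\inc} \cat{T} \xlto{\can} \cat{T}/\cat{I}$; by hypothesis $\cat{J} = \langle T \rangle$ for some set $T \subseteq (\cat{T}/\cat{I})^\alpha$, and since $T \subseteq (\cat{T}/\cat{I})^\beta$, the subcategory $\cat{J}$ is $\beta$-compactly generated in $\cat{T}/\cat{I}$.

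With hypotheses (1) and (2) in hand for the cardinal $\beta$, a second application of Theorem \ref{theorem:main_theorem_general} yields the desired criterion: an object $X \in \cat{T}$ is $\beta$-compact if and only if its image in $\cat{T}/\cat{I}$ is $\beta$-compact for every $\cat{I} \in \cat{F}$. There is no genuine obstacle in the argument; the corollary is a formal consequence of monotonicity of the compactness classes in the cardinal parameter combined with the theorem applied successively at $\alpha$ and at $\beta$.
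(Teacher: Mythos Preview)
Your argument is correct and follows exactly the same route as the paper: one observes that the hypotheses of Theorem~\ref{theorem:main_theorem_general} are monotone in the cardinal parameter (since $\cat{R}^\alpha \subseteq \cat{R}^\beta$ for $\beta \ge \alpha$), and then reapplies the theorem at $\beta$. You have simply spelled out in more detail what the paper compresses into a single sentence.
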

\begin{proof}
If a triangulated category $\cat{Q}$ is $\alpha$-compactly generated, or a subcategory $\cat{S}$ is $\alpha$-compactly generated in some larger triangulated category, then the same is true for any regular cardinal $\beta \ge \alpha$. Hence, if the cocover $\cat{F}$ satisfies the hypotheses of Theorem \ref{theorem:main_theorem_general} for $\alpha$, it satisfies the same conditions for $\beta \ge \alpha$, whence the claim.
\end{proof}

\section{Derived Categories of Rings}\label{section:derived_cat_rings}

In the next section we obtain a characterisation of the $\alpha$-compact objects in the derived category of a scheme. We will use a reduction to the affine case, so in this section we prepare the ground with a review of some facts about the derived category of a ring. Throughout, a \emph{ring} is a (not necessarily commutative) associative ring with identity, and all modules are left modules. Given a ring $R$ we denote by $\qder{R}$ the unbounded derived category of $R$-modules. If $\alpha$ is a regular cardinal, then $\qder{R}^\alpha$ denotes the full subcategory of $\alpha$-compact objects in $\qder{R}$.

A complex of $R$-modules $P$ is called \emph{$\K$-projective} if, for every acyclic complex $X$ of $R$-modules, the complex of abelian groups $\Hom_R(P, X)$ is acyclic \cite{Spalt88}. For example, any bounded above complex of projective $R$-modules is $\K$-projective. The \emph{$\K$-projective resolution} of a complex of $R$-modules $M$ is a quasi-isomorphism $P \lto M$, where $P$ is $\K$-projective. In this case, $P$ is the \emph{unique} (up to homotopy equivalence) $\K$-projective complex isomorphic to $M$ in $\qder{R}$.

\begin{theorem}[(Neeman)]\label{theorem:neeman_compacts} Let $R$ be a ring. The derived category $\qder{R}$ is compactly generated, and given a regular cardinal $\alpha > \aleph_0$ a complex of $R$-modules is $\alpha$-compact in $\qder{R}$ if and only if it is quasi-isomorphic to a $\K$-projective complex of free $R$-modules of rank $< \alpha$.
\end{theorem}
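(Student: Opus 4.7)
The plan is to identify $\qder{R}^\alpha$ with the class $\cat{C}$ of complexes quasi-isomorphic to $\K$-projective complexes of free $R$-modules of rank $<\alpha$.

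That $\qder{R}$ is compactly generated is standard: the ring $R$, viewed as a complex concentrated in degree zero, is compact since $\Hom_{\qder{R}}(R,-) \cong H^0(-)$ preserves coproducts, and its shifts form a generating set because $\Hom_{\qder{R}}(R[n], M) = H^{-n}(M)$ vanishing for all $n$ forces $M \cong 0$. Given compact generation and $\alpha > \aleph_0$, Neeman's theory from \cite{NeemanBook} identifies $\qder{R}^\alpha$ with $\cat{M}$, the smallest thick $\alpha$-localising subcategory of $\qder{R}$ containing $R$; it therefore suffices to prove $\cat{M} = \cat{C}$.

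For the inclusion $\cat{M} \subseteq \cat{C}$, one verifies that $\cat{C}$ is a thick $\alpha$-localising subcategory of $\qder{R}$ containing $R$. Closure under mapping cones follows by representing any morphism $X \to Y$ with $X \simeq P$ and $Y \simeq Q$ in $\cat{C}$ as a chain map $P \to Q$ (possible because $P$ is $\K$-projective); the resulting cone is $\K$-projective with degree-$n$ term $P^{n+1} \oplus Q^n$, free of rank $<\alpha$. Closure under $\alpha$-coproducts uses regularity of $\alpha$ together with $\aleph_0 < \alpha$. Closure under retracts requires an Eilenberg swindle: a direct summand of a $\K$-projective complex of free modules is $\K$-projective with projective terms, and one can promote these to free terms of rank $<\alpha$ by taking the direct sum with $\operatorname{cone}(\operatorname{id}_Z)$ for a suitable $\K$-projective complex $Z$ of free modules of rank $<\alpha$, without altering the quasi-isomorphism class.

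The reverse inclusion $\cat{C} \subseteq \cat{M}$ is where I expect the main obstacle. The aim is to realise any $\K$-projective complex $P$ of free modules of rank $<\alpha$ as a transfinite iterated extension by shifted copies of $R$ of length $<\alpha$. Since each $P^n$ has rank $<\alpha$ and there are only countably many degrees, the total number of free generators is $<\alpha$ by regularity of $\alpha$. In the bounded above case the argument is direct: filter $P$ by its brutal truncation subcomplexes $P_{\ge -N}$; the short exact sequences $0 \to P_{\ge -N+1} \to P_{\ge -N} \to P^{-N}[N] \to 0$ place each $P_{\ge -N}$ in $\cat{M}$ by induction on $N$, and then $P = \operatorname{hocolim}_N P_{\ge -N}$, realised as the cone on $1 - \text{shift}$ applied to the countable coproduct $\bigoplus_N P_{\ge -N}$, lies in $\cat{M}$. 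For general (unbounded above) $P$ the subcomplexes $P_{\ge -N}$ need not be $\K$-projective; one instead invokes the existence of a semi-free resolution with cell count $<\alpha$ and carries out a transfinite induction inside $\cat{M}$, realising each limit ordinal step as an $\alpha$-small homotopy colimit.
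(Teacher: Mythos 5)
Your approach differs genuinely from the paper's. The paper does not argue inside $\qder{R}$ at all: it embeds $\qder{R}$ into $\kproj{R}$ via $\K$-projective resolutions, notes that $\qder{R}$ is the localising subcategory of $\kproj{R}$ generated by the compact object $R$, applies the Thomason--Neeman theorem (Theorem \ref{theorem:thomason_neeman}) to conclude $\qder{R}^\alpha = \kproj{R}^\alpha \cap \qder{R}$, and then quotes Neeman's Propositions 7.4 and 7.5 of \cite{Neeman08} for the description of $\kproj{R}^\alpha$ as the homotopy-retracts of complexes of free modules of rank $< \alpha$. You instead work directly in $\qder{R}$, identify $\qder{R}^\alpha$ with the smallest thick $\alpha$-localising subcategory $\cat{M}$ containing $R$, and try to prove $\cat{M} = \cat{C}$ by hand. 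Your direction $\cat{M} \subseteq \cat{C}$ is sound (and closure of $\cat{C}$ under retracts can be seen even more cleanly than via your swindle: the B\"okstedt--Neeman idempotent-splitting via homotopy colimit produces the retract as a cone on a map of countable coproducts of $P$, and $\cat{C}$ is visibly closed under cones and countable coproducts).

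The gap is in $\cat{C} \subseteq \cat{M}$, and you have essentially labelled it yourself: you write that for unbounded $P$ one ``invokes the existence of a semi-free resolution with cell count $< \alpha$.'' This is not a fact that can simply be invoked; it is the real content of the theorem, and it is exactly what Neeman's Propositions 7.4 and 7.5 in \cite{Neeman08} supply. The difficulty is genuine: a $\K$-projective complex of free modules of rank $< \alpha$ need not itself be semi-free, and the obvious transfinite filtration of $P$ by subcomplexes generated by $< \alpha$ of its free generators (closing each generator up under ``appears in the differential of'') has subquotients that are merely complexes of free modules of rank $< \alpha$, \emph{not} $\K$-projective ones; the paper's Remark~\ref{remark_subtlety} and Appendix~\ref{section:counterexample} show such complexes need not be $\alpha$-compact, so this filtration does not exhibit $P$ as a transfinite extension of objects already known to lie in $\cat{M}$. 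Controlling the cell count of a semi-free resolution by the size of $P$ (rather than by the size of its cohomology, which in the absence of noetherian or cardinality hypotheses on $R$ is not bounded by $\text{rank}(P)$) requires the $\K$-projectivity hypothesis in an essential way, and establishing it amounts to reproving the cited propositions of Neeman. Without either carrying out that argument or citing it, your proof of $\cat{C} \subseteq \cat{M}$ is incomplete.
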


\begin{remark} Since we are allowing noncommutative rings $R$, one has to be a bit careful about the meaning of ``rank''; we direct the reader to \cite[(5.2)]{Neeman08}.
\end{remark}

\begin{proof}
Part of this criterion is stated without proof in \cite{NeemanBook}, and the full statement can be deduced from the more general results of \cite[\S 7]{Neeman08}. To be precise: taking $\K$-projective resolutions defines a fully faithful functor $\qder{R} \lto \kproj{R}$, and we identify $\qder{R}$ as a subcategory of $\kproj{R}$ via this embedding. Neeman proves in \cite{Neeman08} that $\kproj{R}$ is $\aleph_1$-compactly generated. Since $\qder{R}$ is a localising subcategory of $\kproj{R}$ generated by $R$, which is compact in $\kproj{R}$, it follows from Theorem \ref{theorem:thomason_neeman} that for any regular cardinal $\alpha > \aleph_0$ we have
\[
\qder{R}^\alpha = \kproj{R}^\alpha \cap \qder{R},
\]
that is, a complex $M$ of $R$-modules is $\alpha$-compact in $\qder{R}$ if and only if the $\K$-projective resolution $P$ of $M$ is $\alpha$-compact in $\kproj{R}$. But by \cite[Proposition 7.4]{Neeman08} and \cite[Proposition 7.5]{Neeman08}, $P$ is $\alpha$-compact in $\kproj{R}$ if and only if it is homotopy-equivalent to a complex of free $R$-modules of rank $< \alpha$.
\end{proof}

\begin{remark}\label{remark_subtlety} In the context of the theorem, it is natural to ask if the condition of $\K$-projectivity can be dropped, that is: are the $\alpha$-compact objects in $\qder{R}$ precisely the complexes quasi-isomorphic to a complex of free modules of rank $< \alpha$? We will see in Theorem \ref{theorem:neeman_compact_cohomology} that this is true provided that $R$ is either left noetherian or has cardinality $< \alpha$. In general, however, the answer is negative: we construct a counterexample in Appendix \ref{section:counterexample}, consisting of a ring $B$ and a complex of free $B$-modules of rank $< \alpha$ that is not $\alpha$-compact in $\qder{B}$.
\end{remark}

\begin{definition} Let $R$ be a ring and $\alpha$ an infinite cardinal. An $R$-module $M$ is said to be $\alpha$-\emph{generated} if it can be generated as an $R$-module by a subset of cardinality $\alpha$. We say that $M$ is \emph{sub-$\alpha$-generated} if it can be generated by a subset of cardinality $< \alpha$.
\end{definition}

We include a proof of the following standard fact:

\begin{lemma}\label{lemma:generation_property} Let $R$ be a ring, $\alpha$ an infinite cardinal, and suppose that $R$ is either left noetherian, or has cardinality $\le \alpha$. Then if an $R$-module $M$ is $\alpha$-generated, every submodule of $M$ is $\alpha$-generated.
\end{lemma}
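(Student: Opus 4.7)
The plan is to handle the two alternative hypotheses on $R$ separately; in both, the infinitude of $\alpha$ plays an essential role by absorbing countable combinatorics.

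Suppose first that $|R| \le \alpha$. If $M$ is generated by a set $\{m_i\}_{i \in I}$ with $|I| \le \alpha$, then every element of $M$ is a finite $R$-linear combination of the $m_i$. A crude cardinal count gives
\[
|M| \;\le\; (|R|\cdot|I|)^{<\aleph_0} \;\le\; \alpha,
\]
using $\alpha \ge \aleph_0$. Any submodule $N \subseteq M$ therefore has cardinality at most $\alpha$, so $N$ is generated (redundantly) by its own underlying set.

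Now suppose instead that $R$ is left noetherian. Fix a generating set $\{m_i\}_{i \in I}$ of $M$ with $|I| \le \alpha$, and for each finite subset $F \subseteq I$ set $M_F = \sum_{i \in F} R m_i$. Each $M_F$ is finitely generated, hence noetherian. For any submodule $N \subseteq M$, the identity $M = \bigcup_F M_F$ yields $N = \bigcup_F (N \cap M_F)$, and each $N \cap M_F$ is finitely generated as a submodule of the noetherian module $M_F$. Since the number of finite subsets of $I$ is $|I|^{<\aleph_0} \le \alpha$, the module $N$ is the union of at most $\alpha$ finitely generated submodules, so admits a generating set of cardinality at most $\alpha \cdot \aleph_0 = \alpha$.

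There is no serious obstacle here; the whole argument is bookkeeping once one has the two standard inputs — that submodules of noetherian modules are finitely generated, and that rings of small cardinality produce modules of small cardinality. The only subtle point is to note that both cases rely genuinely on $\alpha \ge \aleph_0$ to carry out the cardinal arithmetic, which is precisely why the lemma is phrased for infinite $\alpha$.
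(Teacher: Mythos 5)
Your proof is correct, and it takes a genuinely different route from the paper's. You handle the two hypotheses on $R$ completely separately: when $|R| \le \alpha$, you bound $|M| \le \alpha$ by counting finite linear combinations and observe that any submodule is then generated by its own underlying set; when $R$ is left noetherian, you write $N$ as the directed union $\bigcup_F (N \cap M_F)$ over finite subsets $F$ and invoke noetherianity of each $M_F$. The paper instead runs a single transfinite induction on the cardinality of the generating set of $M$: the hypothesis on $R$ is used only in the base case (finitely generated $M$), and the limit-ordinal step writes $M$ as an increasing chain $M_{<t}$ indexed by ordinals, using regularity of a cardinal $\kappa > \alpha$ to control $\big|\bigcup_t \lambda_t\big|$. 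Your version is more elementary and avoids transfinite induction entirely; the paper's version is more uniform in that both hypotheses on $R$ are dispatched at the same spot (the finite base case), which makes it a bit more transparent why exactly those two hypotheses appear. Your cardinal arithmetic is fine: for infinite $\alpha$ with $|R|\cdot|I| \le \alpha$ one has $(|R|\cdot|I|)^{<\aleph_0} \le \aleph_0 \cdot \alpha = \alpha$, and in the noetherian case $|I|^{<\aleph_0} \le \alpha$ and $\alpha \cdot \aleph_0 = \alpha$.
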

\begin{proof}
Let $\kappa$ be a fixed regular cardinal with $\kappa > \alpha$. We prove the following statement by transfinite induction: if $x$ is an ordinal $< \kappa$, and $M$ is any $R$-module generated by a set of cardinality $|x|$, then every submodule of $M$ is generated by a set of cardinality $< \kappa$. Call this statement $B(x)$. Taking $x = \alpha$ and $\kappa$ to be the successor cardinal of $\alpha$ (which is regular) gives the result.

Successor ordinals: if $x$ is an ordinal $< \kappa$ then either $|x| = |x^+|$, in which case the statement $B(x^+)$ is just $B(x)$ and the inductive step is trivial, or these two cardinals are distinct, in which case $x$ is a finite cardinal. If $x$ is finite, then since $R$ is either left noetherian or has cardinality $\le \alpha$, it is straightforward to verify that $B(x)$ holds.

Limit ordinals: assume that $x$ is a limit ordinal $< \kappa$, and that $B(\beta)$ holds for all $\beta < x$. We may assume that $x$ is a cardinal, since otherwise the inductive step is trivial. Let $M$ be an $R$-module generated by a set of cardinality $x$, say by $\{ m_t \l t < x \}$, and for $t < x$ let $M_{<t}$ be the submodule of $M$ generated by the set $\{ m_s \l s < t \}$. This is a generating set of cardinality $< x$, so by the inductive hypothesis for any submodule $N$ of $M$, the intersection $N \cap M_{< t}$ can be generated by a set $\lambda_t$ of cardinality $< \kappa$. From the equality
\[
N = N \cap M = N \cap \sum_{t < x} M_{< t} = \sum_{t < x} (N \cap M_{<t })
\]
we deduce that $N$ can be generated by the union $\lambda = \cup_{t < x} \lambda_t$, which is of cardinality 
\[
|\lambda| \le \sum_{t < x} |\lambda_t| < \kappa
\]
because $\kappa$ is regular.
\end{proof}

The following argument was kindly explained to the author by Neeman:

\begin{theorem}[(Neeman)]\label{theorem:neeman_compact_cohomology} Let $R$ be a ring and $\alpha > \aleph_0$ a regular cardinal. Suppose that $R$ is either left noetherian, or has cardinality $< \alpha$. Then for a complex $F$ of $R$-modules the following are equivalent:
\begin{itemize}
\item[(i)] $F$ is $\alpha$-compact in $\qder{R}$.
\item[(ii)] $F$ is isomorphic, in $\qder{R}$, to a complex of free $R$-modules of rank $< \alpha$.
\item[(iii)] $H^i(F)$ is a sub-$\alpha$-generated $R$-module for all $i \in \mathbb{Z}$.
\end{itemize}
\end{theorem}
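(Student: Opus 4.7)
The equivalence will be proved via (i) $\Rightarrow$ (ii) $\Rightarrow$ (iii) $\Rightarrow$ (i). The first implication is immediate from Theorem \ref{theorem:neeman_compacts}: an $\alpha$-compact $F$ is isomorphic in $\qder{R}$ to a $\K$-projective complex of free $R$-modules of rank $< \alpha$, which is \emph{a fortiori} a complex of free $R$-modules of rank $< \alpha$. For (ii) $\Rightarrow$ (iii), let $F' \cong F$ in $\qder{R}$ with each $F'^n$ free of rank $< \alpha$, hence sub-$\alpha$-generated. Under either hypothesis on $R$, submodules of sub-$\alpha$-generated modules are sub-$\alpha$-generated: the noetherian case is Lemma \ref{lemma:generation_property}, while if $|R| < \alpha$ then any $\lambda$-generated module with $\lambda < \alpha$ has cardinality at most $\max(|R|, \lambda, \aleph_0) < \alpha$ by regularity of $\alpha$, so every submodule is sub-$\alpha$-generated (by its own underlying set). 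Applying this to the cycles $Z^n(F') \subseteq F'^n$ and noting that $H^n(F) \cong H^n(F')$ is a quotient of $Z^n(F')$ yields (iii).

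For (iii) $\Rightarrow$ (i), the plan is to work with the good truncation tower. Set $G_n = \tau^{\le n} F$. Each $G_n$ is bounded above with cohomology $H^i(G_n)$ equal to $H^i(F)$ for $i \le n$ and to zero otherwise, hence sub-$\alpha$-generated throughout by hypothesis. A standard downward induction then produces a quasi-isomorphism $P_n \to G_n$ from a bounded above complex $P_n$ of free $R$-modules of rank $< \alpha$: at each stage one chooses a free cover of rank $< \alpha$ of a module that is either a cohomology of $G_n$ or a submodule of a previously built free term (needed to kill a kernel of the differential), each of which is sub-$\alpha$-generated by the closure property just discussed, together with closure of sub-$\alpha$-generated modules under $<\alpha$-indexed coproducts (from regularity of $\alpha > \aleph_0$). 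A bounded above complex of projectives is $\K$-projective, so Theorem \ref{theorem:neeman_compacts} gives that each $G_n$ is $\alpha$-compact.

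Finally, the canonical maps $G_n \to G_{n+1}$ assemble into a tower whose homotopy colimit is $F$, since cohomology stabilises: $H^i(G_n) = H^i(F)$ for $n \ge i$. Realising $\operatorname{hocolim}_n G_n$ as the cone of a map $\bigoplus_{n \ge 0} G_n \to \bigoplus_{n \ge 0} G_n$, we see it is built from countable (hence $<\alpha$-, as $\aleph_0 < \alpha$) coproducts of $\alpha$-compact objects by a single mapping cone. Since $\qder{R}^\alpha$ is an $\alpha$-localising triangulated subcategory, closed under $<\alpha$-coproducts and cones, we conclude $F \in \qder{R}^\alpha$, finishing the proof. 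The main obstacle is the rank control in the inductive construction of $P_n$: at each step the module to be covered by a free module must already be sub-$\alpha$-generated. This is exactly where the hypothesis on $R$ is essential, and the precise point at which the argument breaks down in general, as foreshadowed by Remark \ref{remark_subtlety}.
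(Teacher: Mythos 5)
Your proof is correct, and for the nontrivial implication (iii) $\Rightarrow$ (i) it takes a genuinely different route from the paper's. The paper (following an argument of Neeman) runs a \emph{ghost tower}: choose for each $i$ a free module $P^i$ of rank $< \alpha$ surjecting onto $H^i(F)$, assemble a map $P = \bigoplus_i \Sigma^{-i}P^i \to F$ that is surjective on cohomology, and extend to a triangle $P \to F \xlto{\phi} F_1 \to \Sigma P$; then $\phi$ is a ghost with $\alpha$-compact cone, and the ring hypothesis (via Lemma \ref{lemma:generation_property}) is invoked exactly once per step to see that $H^i(F_1)$, being a submodule of the free module $P^{i+1}$, is again sub-$\alpha$-generated. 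Iterating gives a tower $F = F_0 \to F_1 \to \cdots$ of ghosts whose hocolim is acyclic, while in the quotient $\qderl{\alpha}{R}/\qder{R}^\alpha$ each $\phi_i$ is an isomorphism, forcing $F$ to vanish there. You instead build $F$ up from below, as the hocolim of its good truncations $\tau^{\le n}F$, each bounded above with sub-$\alpha$-generated cohomology; the ring hypothesis enters at every degree of the downward induction that constructs a bounded-above free resolution of rank $< \alpha$ for each $\tau^{\le n}F$, after which Theorem \ref{theorem:neeman_compacts} (these resolutions are automatically $\K$-projective) and $\alpha$-localisability of $\qder{R}^\alpha$ with $\alpha > \aleph_0$ finish the job. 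Both arguments work; yours hews closer to classical homological algebra and produces explicit small resolutions for the truncations, while the paper's ghost argument is slicker and makes the role of the hypothesis on $R$ more visibly pointwise (once per iteration of the tower rather than once per cohomological degree of a resolution).
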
 
\begin{proof}
The implication $(i) \Rightarrow (ii)$ is a consequence of Theorem \ref{theorem:neeman_compacts}, while $(ii) \Rightarrow (iii)$ follows from Lemma \ref{lemma:generation_property}, so it remains to prove that $(iii) \Rightarrow (i)$. Let $\qderl{\alpha}{R}$ denote the full subcategory of $\qder{R}$ consisting of complexes with sub-$\alpha$-generated cohomology. Using Lemma \ref{lemma:generation_property} this is easily checked to be an $\alpha$-localising triangulated subcategory. We already know that $\qder{R}^\alpha \subseteq \qderl{\alpha}{R}$, and we want to prove the reverse inclusion. Let us begin with a technical observation.

Given a complex $F$ in $\qderl{\alpha}{R}$, we claim that it is possible to construct a morphism of complexes $\phi: F \lto F'$ with $F' \in \qderl{\alpha}{R}$ such that the mapping cone of $\phi$ is $\alpha$-compact and $\phi$ is a \emph{ghost}, that is, the induced maps $H^i(\phi): H^i(F) \lto H^i(F')$ are zero for all $i \in \mathbb{Z}$.

For each $i \in \mathbb{Z}$ there exists a surjective map $P^i \lto H^i(F)$ from some free $R$-module $P^i$ of rank $< \alpha$, and this lifts to a morphism of complexes $g_i: \Sigma^{-i} P^i \lto F$. The coproduct $P = \bigoplus_{i \in \mathbb{Z}} \Sigma^{-i} P^i$ is $\alpha$-compact in $\qder{R}$, and the sum $g: P \lto F$ of the $g_i$'s is a morphism of complexes surjective on cohomology. Extending to a triangle
\[
P \xlto{g} F \xlto{\phi} F' \lto \Sigma P
\]
we observe that $\phi$ is a ghost with $\alpha$-compact mapping one, and $F'$ has sub-$\alpha$-generated cohomology. If we begin with a fixed $F = F_0$ in $\qderl{\alpha}{R}$ and iterate this process to construct a sequence
\[
F_0 \xlto{\phi_0} F_1 \xlto{\phi_1} F_2 \xlto{\phi_2} \cdots
\]
of ghost maps $\phi_i$ in $\qderl{\alpha}{R}$ with $\alpha$-compact cones, then the homotopy colimit of this sequence in $\qder{R}$ is acyclic (that is, zero). Since $\qderl{\alpha}{R}$ is closed under countable coproducts, this also calculates the homotopy colimit in $\qderl{\alpha}{R}$, and thus in the quotient $\qderl{\alpha}{R}/\qder{R}^\alpha$. But in this quotient each $\phi_i$ is an isomorphism (as it has zero cone), so in this case the homotopy colimit is equal to $F$. Together, these observations imply that $F \in \qder{R}^\alpha$, as claimed.
\end{proof}

%\begin{remark} The statement of the theorem is false for $\alpha = \aleph_0$. An $R$-module is sub-$\aleph_0$-generated if and only if it is finitely generated, and finitely generated modules need not be compact in $\qder{R}$.
%\end{remark}

\begin{remark} We learn from the theorem that when $\alpha > |R|$, the $\alpha$-compacts in $\qder{R}$ can be characterised by the ``size'' of their cohomology. In fact, this is a general phenomenon in well-generated triangulated categories, as explained by Krause in \cite[Theorem B]{Krause01} and \cite[Theorem C]{Krause02}. %However,  in Remark \ref{remark_subtlety}, the situation is more subtle when $\alpha \le |R|$.
\end{remark}

\section{Derived Categories of Schemes}\label{example:compacts_derived}

In this section we apply Theorem \ref{theorem:main_theorem_general} to the derived category of quasi-coherent sheaves on a scheme. We refer the reader to \cite{LipmanNotes, Neeman96, Tarrio97, Tarrio08} for background on unbounded derived categories of schemes. A scheme is \emph{semi-separated} if it admits a covering by affine open subsets $\{ V_i \}_{i \in I}$ with all pairwise intersections $V_i \cap V_j$ affine; see \cite{Tarrio08, ThomasonTrobaugh}. Separated schemes are semi-separated, and semi-separated schemes are quasi-separated. Given a scheme $X$ we denote by $\qder{X}$ the unbounded derived category of quasi-coherent sheaves on $X$. If $X$ is quasi-compact and semi-separated, then $\qder{X}$ is equivalent to the full subcategory of complexes with quasi-coherent cohomology in the derived category of sheaves of $\cat{O}_X$-modules \cite{BokNee93}.
 
Let $X$ be a quasi-compact semi-separated scheme, and $\{ U_1, \ldots, U_n \}$ a cover of $X$ by affine open subsets. For $1 \le i \le n$ set $Z_i = X \setminus U_i$ and denote by $\qderl{Z_i}{X}$ the full subcategory of $\qder{X}$ consisting of complexes with cohomology supported on $Z_i$. The inclusion $\qderl{Z_i}{X} \lto \qder{X}$ and restriction $(-)|_{U_i}$ fit into a sequence of functors
\[
\xymatrix@C+1pc{
0 \ar[r] & \qderl{Z_i}{X} \ar[r]^{\inc} & \qder{X} \ar[r]^{(-)|_{U_i}} & \qder{U_i} \ar[r] & 0
}
\]
which is exact, in the sense that $(-)|_{U_i}$ induces an equivalence $\qder{X}/\qderl{Z_i}{X} \xlto{\sim} \qder{U_i}$. In fact this is a localisation sequence: the right adjoint of $\qderl{Z_i}{X} \lto \qder{X}$ is Grothendieck's local cohomology functor $\rdev\Gamma_{Z_i}(-)$, and the right adjoint of $(-)|_{U_i}$ is the derived direct image $\rdev f_*$, where $f: U_i \lto X$ is the inclusion. In particular, each $\qderl{Z_i}{X}$ is a Bousfield subcategory of $\qder{X}$.

The family $\cat{F} = \{ \qderl{Z_1}{X}, \ldots, \qderl{Z_n}{X} \}$ is a cocovering of $\qder{X}$, and Rouquier proves in \cite[\S 6.2]{Rouquier08} that this cocovering satisfies the hypotheses $(1),(2)$ of Theorem \ref{theorem:main_theorem_general} for the cardinal $\alpha = \aleph_0$. Let us examine the content of these hypotheses, and sketch Rouquier's argument in each case:
\begin{itemize}
\item[(1)] requires that $\qder{X}/\qderl{Z_i}{X} \cong \qder{U_i}$ be compactly generated for $1 \le i \le n$. But $U_i \cong \Spec(A_i)$ is affine, and thus $\qder{U_i} \cong \qder{A_i}$ is known to be compactly generated.
\item[(2)] requires, given an index $1 \le j \le n$ and nonempty subset $I \subseteq \{ 1,\ldots, n \}$ not containing $j$, that the essential image of the composite
\[
\xymatrix@C+1pc{
\bigcap_{i \in I} \qderl{Z_i}{X} \ar[r]^(0.6){\inc} & \qder{X} \ar[r]^{(-)|_{U_j}}& \qder{U_j}
}
\]
be compactly generated in $\qder{U_j}$. But this image is just $\qderl{Z}{U_j}$, where $Z$ is the complement of $U_j \cap \bigcup_{i \in I} U_i$ in $U_j$. Since $U_j$ is affine one can generate this subcategory by a Koszul complex, which is compact in $\qder{U_j}$; see \cite[Proposition 6.6]{Rouquier08} or \cite[Proposition 6.1]{BokNee93}.
\end{itemize}
In particular, Rouquier proves that if $Z$ is a closed subset of $X$ with quasi-compact complement $U$, then $\qderl{Z}{X}$ is compactly generated in $\qder{X}$. Since the restriction functor $(-)|_U: \qder{X} \lto \qder{U}$ factors as $\qder{X} \lto \qder{X}/\qderl{Z}{X} \cong \qder{U}$ we infer from \cite[Theorem 4.4.9]{NeemanBook} that the functor $(-)|_U$ preserves $\alpha$-compactness for any regular cardinal $\alpha$. 
\vspace{0.2cm}

Let us record the following special case of Corollary $\ref{corollary:alpha_compactness_local}$, with $\cat{T} = \qder{X}$ and $\cat{F}$ as above.

%\[
%\xymatrix@C+1pc{
%\qder{X} \ar[r]^(0.4){\can} & \qder{X}/\qderl{Z}{X} \ar[r]^(0.6){\sim} & \qder{U}
%}
%\]
% If one wants to prove that D(X) is compactly generated, then one should start with an AFFINE open cover so one can reduce to the case of a ring. But here we want to study coverings by quasi-compact opens because we want the next proposition to be general.
\begin{proposition}\label{prop:alpha_compactness_local_derived} Let $X$ be a quasi-compact semi-separated scheme and $\alpha$ a regular cardinal. Given a cover $\{ U_1, \ldots, U_n \}$ of $X$ by affine open subsets, a complex $\md{F}$ of quasi-coherent sheaves on $X$ is $\alpha$-compact in $\qder{X}$ if and only if $\md{F}|_{U_i}$ is $\alpha$-compact in $\qder{U_i}$ for $1 \le i \le n$.
\end{proposition}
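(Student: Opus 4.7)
The plan is to deduce this proposition as an immediate special case of Corollary \ref{corollary:alpha_compactness_local} applied to the cocovering of $\qder{X}$ already constructed in the paragraphs preceding the statement. In more detail, take $\cat{T} = \qder{X}$ and
\[
\cat{F} = \{\qderl{Z_1}{X}, \ldots, \qderl{Z_n}{X}\},
\]
where $Z_i = X \setminus U_i$. The discussion before the proposition, summarising Rouquier's verification in \cite[\S 6.2]{Rouquier08}, has already checked that $\cat{F}$ is a cocovering of $\qder{X}$ satisfying hypotheses $(1)$ and $(2)$ of Theorem \ref{theorem:main_theorem_general} for $\alpha = \aleph_0$: hypothesis $(1)$ amounts to compact generation of each $\qder{U_i} \cong \qder{A_i}$ in the affine case, and hypothesis $(2)$ is handled by generating the intersections $\qderl{Z}{U_j}$ by Koszul complexes, which are compact.

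Having established that the hypotheses of Theorem \ref{theorem:main_theorem_general} hold for $\aleph_0$, I would invoke Corollary \ref{corollary:alpha_compactness_local}. Since every regular cardinal $\alpha$ satisfies $\alpha \ge \aleph_0$, the corollary applies verbatim to give that an object $\md{F} \in \qder{X}$ is $\alpha$-compact if and only if its image in $\qder{X}/\qderl{Z_i}{X}$ is $\alpha$-compact for each $i$.

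Finally, the canonical equivalence $\qder{X}/\qderl{Z_i}{X} \xlto{\sim} \qder{U_i}$ induced by restriction (recalled just above the proposition) identifies the image of $\md{F}$ under the quotient functor with $\md{F}|_{U_i}$, completing the translation of the criterion into the form stated. There is no real obstacle: the main work has been done in proving Theorem \ref{theorem:main_theorem_general} and in Rouquier's verification of the cocovering hypotheses, and the only thing to mention for the reader is that a regular cardinal is by definition infinite, so the passage from $\aleph_0$ to arbitrary $\alpha$ via the corollary is unconditional.
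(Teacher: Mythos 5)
Your proposal is correct and is essentially identical to the paper's own approach: the paper explicitly introduces the proposition with the words ``Let us record the following special case of Corollary \ref{corollary:alpha_compactness_local}, with $\cat{T} = \qder{X}$ and $\cat{F}$ as above,'' relying on Rouquier's verification of hypotheses (1) and (2) for $\aleph_0$ and the equivalence $\qder{X}/\qderl{Z_i}{X} \cong \qder{U_i}$, exactly as you do. Nothing is missing.
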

%\begin{proof}
%Set $\cat{T} = \qder{X}$ and let $\cat{F}$ be a cocovering as above. The restriction functor $(-)|_{U_i}: \qder{X} \lto \qder{U_i}$ factors as the quotient $\qder{X} \lto \qder{X}/\qderl{Z_i}{X}$ followed by $\qder{X}/\qderl{Z_i}{X} \xlto{\sim} \qder{U_i}$, so the proposition is a special case of Corollary $\ref{corollary:alpha_compactness_local}$.
%\end{proof}

The proposition reduces the problem of understanding the $\alpha$-compacts in $\qder{X}$ to the problem of understanding the $\alpha$-compacts in the derived category of a ring, which was settled in Section \ref{section:derived_cat_rings}.

\begin{proposition} Let $X$ be a quasi-compact semi-separated scheme, and $\alpha > \aleph_0$ a regular cardinal. A complex $\md{F}$ of quasi-coherent sheaves on $X$ is $\alpha$-compact in $\qder{X}$ if and only if, for every $x \in X$, there is an affine open neighborhood $U$ of $x$ such that $\Gamma(U, \md{F})$ is quasi-isomorphic as a complex of $A = \Gamma(U, \cat{O}_X)$-modules to a $\K$-projective complex of free $A$-modules of rank $< \alpha$.
\end{proposition}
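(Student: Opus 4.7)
The plan is to reduce the local characterization to Theorem \ref{theorem:neeman_compacts} in the affine case, via Proposition \ref{prop:alpha_compactness_local_derived}. The underlying observation is that for any affine open $U \subseteq X$ with $U \cong \Spec A$, the (exact) global sections functor induces an equivalence $\qder{U} \cong \qder{A}$ under which $\md{F}|_U$ corresponds to the complex $\Gamma(U, \md{F})$ of $A$-modules. So the local condition in the statement is, by Theorem \ref{theorem:neeman_compacts}, just $\alpha$-compactness of $\md{F}|_U$ in $\qder{U}$.

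For the forward direction, suppose $\md{F}$ is $\alpha$-compact in $\qder{X}$ and let $x \in X$. Choose any affine open neighborhood $U$ of $x$. Since $X$ is quasi-compact semi-separated, the discussion preceding Proposition \ref{prop:alpha_compactness_local_derived} shows that the restriction functor $(-)|_U : \qder{X} \lto \qder{U}$ preserves $\alpha$-compactness. Hence $\md{F}|_U$ is $\alpha$-compact in $\qder{U} \cong \qder{A}$, and Theorem \ref{theorem:neeman_compacts} yields the desired description of $\Gamma(U, \md{F})$.

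For the reverse direction, the hypothesis produces an open cover of $X$ by affine opens $U$ with $\Gamma(U, \md{F})$ quasi-isomorphic to a $\K$-projective complex of free $\Gamma(U, \cat{O}_X)$-modules of rank $< \alpha$. By quasi-compactness of $X$, extract a finite subcover $\{ U_1, \ldots, U_n \}$, with $U_i \cong \Spec A_i$. By Theorem \ref{theorem:neeman_compacts}, each $\Gamma(U_i, \md{F})$ is $\alpha$-compact in $\qder{A_i}$, so $\md{F}|_{U_i}$ is $\alpha$-compact in $\qder{U_i}$ for $1 \le i \le n$. Proposition \ref{prop:alpha_compactness_local_derived}, applied to the affine cover $\{ U_1, \ldots, U_n \}$, then delivers $\alpha$-compactness of $\md{F}$ in $\qder{X}$.

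There is essentially no serious obstacle: the statement is a direct combination of the local-to-global principle already proved (Proposition \ref{prop:alpha_compactness_local_derived}) and the affine description from Theorem \ref{theorem:neeman_compacts}. The only point requiring a remark is the identification of $\md{F}|_U$ with $\Gamma(U, \md{F})$ under the equivalence $\qder{U} \cong \qder{A}$, which is standard for affine $U$ since $\Gamma(U, -)$ is exact on quasi-coherent sheaves.
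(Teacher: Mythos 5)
Your proof is correct and follows essentially the same route as the paper's (which simply says ``use Proposition \ref{prop:alpha_compactness_local_derived} to reduce to the affine case, which is Theorem \ref{theorem:neeman_compacts}''). You have merely unpacked the two directions: the forward implication via the observation that restriction to any affine open preserves $\alpha$-compactness, and the reverse implication by extracting a finite affine subcover and invoking Proposition \ref{prop:alpha_compactness_local_derived}, together with the standard equivalence $\qder{U}\cong\qder{A}$ identifying $\md{F}|_U$ with $\Gamma(U,\md{F})$.
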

\begin{proof}
Using Proposition \ref{prop:alpha_compactness_local_derived} we reduce to the case of affine $X$, which is Theorem \ref{theorem:neeman_compacts}.
\end{proof}

Let $\qder{X}^\alpha$ denote the full subcategory of $\alpha$-compact objects in $\qder{X}$. Subcategories of $\qder{X}$ are typically defined by imposing conditions on homology, so it is comforting to have such a description of $\qder{X}^\alpha$. We begin with some definitions. 

\begin{definition} Let $\alpha$ be an infinite cardinal. A quasi-coherent sheaf $\md{F}$ on a scheme $X$ is said to be \emph{locally $\alpha$-generated} if for every $x \in X$, there exists an open neighborhood $U$ of $x$ together with an epimorphism $\bigoplus_{j \in J} \cat{O}_X|_U \lto \md{F}|_U$, for some index set $J$ of cardinality $\alpha$. If for each $x \in X$ we can arrange for the set $J$ to be of cardinality $< \alpha$, then $\md{F}$ is \emph{locally sub-$\alpha$-generated}.
\end{definition}

\begin{lemma}\label{lemma:technical_generation} Let $R$ be a commutative ring, $\alpha$ an infinite cardinal and $F$ an $R$-module. Then the following are equivalent:
\begin{itemize}
\item[(i)] $F$ is an $\alpha$-generated $R$-module.
\item[(ii)] The complex $\md{F}$ of quasi-coherent sheaves on $\Spec(R)$ associated to $F$ is locally $\alpha$-generated.
\end{itemize}
\end{lemma}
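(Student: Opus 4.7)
\medskip

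The proof will be a reasonably standard localisation/quasi-compactness argument, with the only subtlety being that we are working with a cardinal bound rather than with finiteness.

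For (i) $\Rightarrow$ (ii), I would begin from an epimorphism $\bigoplus_{j \in J} R \lto F$ with $|J| = \alpha$. The exact functor sending an $R$-module to the associated quasi-coherent sheaf on $\Spec(R)$ turns this into an epimorphism $\bigoplus_{j \in J} \cat{O}_{\Spec(R)} \lto \md{F}$, which witnesses $\md{F}$ as being globally (hence locally) $\alpha$-generated: for every $x \in \Spec(R)$ one may take $U = \Spec(R)$ in the definition. No further work is required in this direction.

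For the converse (ii) $\Rightarrow$ (i), the plan is to use quasi-compactness of $\Spec(R)$ to replace the local condition with a finite one. First, since every open of $\Spec(R)$ is a union of basic opens $D(f)$ and since on $D(f)$ the sheaf $\md{F}|_{D(f)}$ is the quasi-coherent sheaf associated to the $R_f$-module $F_f$, an epimorphism $\bigoplus_{j \in J} \cat{O}_X|_{D(f)} \lto \md{F}|_{D(f)}$ with $|J| = \alpha$ translates to an epimorphism $\bigoplus_{j \in J} R_f \lto F_f$ of $R_f$-modules. I would then cover $\Spec(R)$ by finitely many basic opens $D(f_1), \ldots, D(f_n)$ on each of which $\md{F}$ is $\alpha$-generated. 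For each $i$, choose elements of $F_{f_i}$ whose images generate it as an $R_{f_i}$-module; by clearing denominators we may lift these to a subset $S_i \subseteq F$ with $|S_i| \le \alpha$ whose image in $F_{f_i}$ still generates $F_{f_i}$ over $R_{f_i}$.

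Finally, set $S = \bigcup_{i=1}^n S_i$, a subset of $F$ of cardinality $\le \alpha$, and let $F' \subseteq F$ be the submodule it generates. The quotient $F/F'$ has the property that $(F/F')_{f_i} = 0$ for each $i$, because by construction the image of $S_i$ already generates $F_{f_i}$. Hence the associated sheaf $\widetilde{F/F'}$ vanishes on each $D(f_i)$, and since these cover $\Spec(R)$ we conclude $F/F' = 0$. Thus $F$ is generated by a set of cardinality $\le \alpha$; padding with zeros if necessary, it is $\alpha$-generated. The one place to be careful is the lifting step: one must check that generators of $F_{f_i}$ of the form $a_j/f_i^{n_j}$ with $a_j \in F$ can be replaced by the lifts $\{a_j\}$ without losing the generating property in $F_{f_i}$, which is immediate since $f_i$ is a unit in $R_{f_i}$. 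Nothing else in the argument depends on any hypothesis on $R$ or on $\alpha$.
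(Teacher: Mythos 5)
Your proof is correct and follows essentially the same route as the paper: reduce to a finite cover of $\Spec(R)$ by basic opens $D(f_i)$ via quasi-compactness, clear denominators to lift the generators of each $F_{f_i}$ into $F$, and argue that the union of these lifts generates $F$ globally. The only superficial difference is that you spell out the final step (showing $F/F' = 0$ by checking it vanishes after localising at each $f_i$) where the paper simply asserts that the set of numerators generates.
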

\begin{proof}
$(i) \Rightarrow (ii)$ is clear. For $(ii) \Rightarrow (i)$, set $X = \Spec(R)$ and suppose that $\md{F}$ is locally $\alpha$-generated. We may find generators $f_1,\ldots,f_r$ of the unit ideal of $R$, with the property that on each $U_i = D(f_i)$ there is an epimorphism $\oplus_{j \in J_i} \cat{O}_X|_{U_i} \lto \md{F}|_{U_i}$ for some set $J_i$ of cardinality $\alpha$. That is, $F[f_i^{-1}]$ can be generated as an $R[f_i^{-1}]$-module by a subset $\{ a_j/f_i^{n_j} \}_{j \in J_i}$ of cardinality $|J_i| = \alpha$. Form a set $J$ consisting of the union, over each $1 \le i \le r$, of the set of numerators $\{ a_j \}_{j \in J_i}$. This set has cardinality $\alpha$ and generates $F$ as an $R$-module.
\end{proof}

Finally, we arrive at a characterisation of $\alpha$-compactness in terms of cohomology sheaves.

\begin{corollary} Let $X$ be a quasi-compact semi-separated scheme and $\alpha > \aleph_0$ a regular cardinal. Suppose that $X$ is either
\begin{itemize}
\item[(a)] noetherian, or
\item[(b)] admits a cover by open affines $\{ U_i \}_{1 \le i \le n}$ with $\Gamma(U_i, \cat{O}_X)$ of cardinality $< \alpha$ for $1 \le i \le n$.
\end{itemize}
Then a complex $\md{F}$ of quasi-coherent sheaves on $X$ is $\alpha$-compact in $\qder{X}$ if and only if $H^i(\md{F})$ is locally sub-$\alpha$-generated for every $i \in \mathbb{Z}$.
\end{corollary}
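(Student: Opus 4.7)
The plan is to localise to a well-chosen affine cover and then apply Theorem \ref{theorem:neeman_compact_cohomology} ring-by-ring. First I would choose a finite affine cover $\{U_i\}_{1 \le i \le n}$ of $X$ with $U_i = \Spec(A_i)$ such that each $A_i$ is either left noetherian or of cardinality $<\alpha$: in case (a) any finite affine cover works since $X$ is noetherian, and in case (b) the cover is prescribed. By Proposition \ref{prop:alpha_compactness_local_derived}, $\md{F}$ is $\alpha$-compact in $\qder{X}$ if and only if each restriction $\md{F}|_{U_i}$ is $\alpha$-compact in $\qder{U_i} \cong \qder{A_i}$, and by Theorem \ref{theorem:neeman_compact_cohomology} (whose hypothesis is guaranteed by the choice of cover) the latter is equivalent to each $H^j(\Gamma(U_i,\md{F})) \cong \Gamma(U_i,H^j(\md{F}))$ being sub-$\alpha$-generated as an $A_i$-module, where we use that $\Gamma(U_i,-)$ is exact on quasi-coherent sheaves since $U_i$ is affine.

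Next I would upgrade Lemma \ref{lemma:technical_generation} to the sub-$\alpha$-generated setting: for a commutative ring $R$ and regular cardinal $\alpha > \aleph_0$, an $R$-module $M$ is sub-$\alpha$-generated if and only if the associated quasi-coherent sheaf on $\Spec(R)$ is locally sub-$\alpha$-generated. The $(\Rightarrow)$ direction is immediate. For $(\Leftarrow)$, quasi-compactness of $\Spec(R)$ permits refinement to a finite cover by principal opens $D(f_1),\ldots,D(f_r)$ on which each $M[f_k^{-1}]$ is generated by a set of cardinality $<\alpha$; clearing denominators and taking the union of numerators over $k$ produces a generating set for $M$, of cardinality $<\alpha$ because $\alpha > \aleph_0$ is regular and a finite sum of cardinals below $\alpha$ remains below $\alpha$.

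With these pieces in place, the corollary is assembled by applying the sub-$\alpha$-generated lemma to each pair $(A_i, H^j(\md{F})|_{U_i})$: local sub-$\alpha$-generatedness of $H^j(\md{F})$ on $X$ restricts to local sub-$\alpha$-generatedness of $H^j(\md{F})|_{U_i}$ on $U_i$, which translates via the lemma to sub-$\alpha$-generatedness of the $A_i$-module $\Gamma(U_i,H^j(\md{F}))$, and conversely (the converse direction being trivial, since every point has an $\alpha$-small global generating set on some $U_i$ in the cover). I do not expect a serious obstacle here; the heavy lifting is already in Theorem \ref{theorem:neeman_compact_cohomology} and Proposition \ref{prop:alpha_compactness_local_derived}, and the only delicate point is the sub-$\alpha$-generated strengthening of Lemma \ref{lemma:technical_generation}, which is the same argument as the original with ``$<\alpha$'' in place of ``$\alpha$''.
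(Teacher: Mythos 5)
Your proposal is correct and follows essentially the same route as the paper: choose an affine cover to which Theorem~\ref{theorem:neeman_compact_cohomology} applies, reduce via Proposition~\ref{prop:alpha_compactness_local_derived}, and translate between module-level and sheaf-level generation conditions using Lemma~\ref{lemma:technical_generation}. The one useful extra care you take is noting explicitly that Lemma~\ref{lemma:technical_generation} as stated treats $\alpha$-generation rather than sub-$\alpha$-generation and must be upgraded (say, by applying it for each $\kappa < \alpha$ and using finiteness of the refining cover of $\Spec(R)$); the paper invokes the lemma directly in the sub-$\alpha$-generated form, leaving this routine but nontrivial adjustment implicit.
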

\begin{proof}
Under either hypothesis on $X$ there is an affine open cover $\{ U_1, \ldots, U_n \}$  of $X$ such that the conclusion of Theorem \ref{theorem:neeman_compact_cohomology} applies to each of the rings $\Gamma(U_i, \cat{O}_X)$. By Lemma \ref{lemma:technical_generation}, a quasi-coherent sheaf $\md{G}$ on $X$ is locally sub-$\alpha$-generated if and only if $\Gamma(U_i, \md{G})$ is a sub-$\alpha$-generated $\Gamma(U_i, \cat{O}_X)$-module for $1 \le i \le n$, so the result follows from Proposition \ref{prop:alpha_compactness_local_derived}.
\end{proof}

\appendix

\section{Counterexample}\label{section:counterexample}

Let $R$ be a ring and $\alpha > \aleph_0$ a regular cardinal. We proved in Theorem \ref{theorem:neeman_compact_cohomology} that when $R$ is either left noetherian or has cardinality $< \alpha$, the $\alpha$-compact objects in $\qder{R}$ are the objects isomorphic to a complex of free $R$-modules of rank $< \alpha$. In this appendix we show that this characterisation of the $\alpha$-compact objects cannot hold for arbitrary rings, by constructing for any given regular cardinal $\alpha > \aleph_0$ a commutative local ring $B$ and a complex of free $B$-modules of rank one which is not $\alpha$-compact. Needless to say, $B$ is non-noetherian and has cardinality $\ge \alpha$.

Let $k$ be a field and $\beta$ an infinite cardinal, and $I = \{ x_i \}_{i \in \beta}$ a set of variables indexed by $\beta$. We denote by $k[[I]]$ the ring of formal power series in the set of variables $I$. More precisely, let $\mathbb{N}^{(I)}$ be the set of all functions $\gamma: I \lto \mathbb{N}$ with finite support, and define
\[
k[[ I ]] := \{ \text{functions $f: \mathbb{N}^{(I)} \lto k$} \},
\]
with the usual addition $(f + g)(\gamma) = f(\gamma) + g(\gamma)$ and product $(f \cdot g)(\gamma) = \sum_{\alpha + \beta = \gamma} f(\alpha) g(\beta)$. Then $k[[I]]$ is a commutative local ring, with maximal ideal given by the ideal of power series with zero constant term. We say that a monomial in the set of variables $I$ is \emph{pure} if it is of the form $x_i^n$ for some $i \in \beta$ and $n \ge 0$, with $x_i^0$ understood to be the identity in $k[[I]]$. Let $\mf{a}$ denote the ideal of power series in which the coefficient of every pure monomial is zero (e.g. $x_ix_j$ for $i \neq j$) and define
\[
B' := k[[I]]/\mf{a}.
\]
Each residue class of $B'$ contains a unique power series $f$ in which only pure monomials have nonzero coefficients, and such $f$ can be written as a formal sum
\begin{equation}\label{eq:formal_power_series}
f = \lambda \cdot 1 + \sum_{i \in \beta} \sum_{n \ge 1} f_{i, n} \cdot x_i^n, \qquad \lambda, f_{i,n} \in k.
\end{equation}
We say that a power series $f$ \emph{involves} a variable $x_i$ if the coefficient of $x_i^n$ in $f$ is nonzero for some $n \ge 1$. Finally, we may define the desired ring $B$ as a subring of $B'$.

\begin{definition} Given a field $k$ and an infinite cardinal $\beta$, we define a commutative $k$-algebra $B$ by
\[
B := \{ f \in B' \l \text{$f$ involves only finitely many variables in $I$} \}.
\]
Concretely, this is the ring of formal power series of the form (\ref{eq:formal_power_series}) in the set of variables $\{ x_i \}_{i \in \beta}$, with each power series involving only a finite number of variables. Power series are multiplied according to the relations $x_i^n x_i^m = x_i^{n+m}$ and $x_i x_j = 0$ for $i \neq j$. This is a commutative local ring, with maximal ideal $\mf{m}_B$ given by the set of power series with zero constant term, and residue field $k = B/\mf{m}_B$.
\end{definition}

\begin{remark} The following properties of $B$ are immediate:
\begin{itemize}
\item Given a nonempty subset $J \subseteq \beta$, the ideal $(x_j)_{j \in J}$ in $B$ consists precisely of those power series $f$ with zero constant term, which do not involve any variable $x_i$ with $i \in \beta \setminus J$.
\item Given $i \in \beta$, the kernel of $B \xlto{x_i} B$ is the ideal $(x_j)_{j \in \beta \setminus \{ i \}}$.
\item For any nonempty subset $J \subseteq \beta$ there is an internal direct sum $(x_j)_{j \in J} \cong \bigoplus_{j \in J} (x_j)$.
\end{itemize}
\end{remark}

We will need the following consequence of Theorem \ref{theorem:neeman_compacts}.

\begin{lemma}\label{lemma:compact_implies_rank_limit} Let $A$ be a commutative local ring with residue field $k$, and $\alpha > \aleph_0$ a regular cardinal. If $M$ is an $A$-module belonging to $\qder{A}^{\alpha}$ then $\textup{rank}_k \textup{Tor}_n( M, k ) < \alpha$ for all $n \ge 0$.
\end{lemma}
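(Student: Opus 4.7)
The plan is to invoke Theorem \ref{theorem:neeman_compacts} to obtain an explicit model for $M$, and then compute $\textup{Tor}$ by tensoring with $k$.

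Since $M \in \qder{A}^\alpha$ and $\alpha > \aleph_0$, Theorem \ref{theorem:neeman_compacts} provides a quasi-isomorphism $P \lto M$ where $P$ is a $\K$-projective complex of free $A$-modules $P^n$ of rank $< \alpha$. Because $P$ is $\K$-projective, the groups $\textup{Tor}_n(M, k)$ are computed as the cohomology of the complex $P \otimes_A k$.

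Each term $P^n \otimes_A k$ is a free $k$-module of rank equal to $\textup{rank}_A P^n < \alpha$. Since $\textup{Tor}_n(M, k)$ arises as a subquotient of the $k$-vector space $P^{-n} \otimes_A k$ (adjusting for the cohomological indexing), and any subquotient of a $k$-vector space of rank $< \alpha$ again has rank $< \alpha$, the desired bound follows immediately.

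The argument is essentially a one-line calculation once Theorem \ref{theorem:neeman_compacts} is in hand; there is no real obstacle, since the only subtlety---that a free module of rank $< \alpha$ need not embed in a single ``piece'' if we used a non-$\K$-projective resolution---is precisely what the $\K$-projectivity hypothesis eliminates, allowing us to compute $\textup{Tor}$ directly from $P \otimes_A k$ rather than passing through a separate flat resolution.
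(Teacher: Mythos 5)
Your proof matches the paper's own argument: both invoke Theorem \ref{theorem:neeman_compacts} to produce a $\K$-projective complex $P$ of free $A$-modules of rank $< \alpha$, observe that $P \otimes_A k$ computes $M \otimes^{\mathbb{L}}_A k$, and conclude that each $\textup{Tor}_n(M,k)$, being a subquotient of a $k$-vector space of dimension $< \alpha$, has rank $< \alpha$. No meaningful difference in approach or level of detail.
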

\begin{proof}
If $M$ belongs to $\qder{A}^\alpha$ then by Theorem \ref{theorem:neeman_compacts} it admits a $\K$-projective resolution by a complex $P$ of free $A$-modules of rank $< \alpha$. Hence $P \otimes_A k = M \otimes^{\bold{L}}_A k$ is a complex of $k$-vector spaces of dimension $< \alpha$, and the claim follows.
\end{proof}

The key pathology of the ring $B$ becomes apparent in the next lemma.

\begin{lemma}\label{lemma:step_to_contradiction} Given $i \in \beta$ and the corresponding ideal $(x_i)$ in $B$, we have
\[
\textup{rank}_k \textup{Tor}_n( (x_i), k) = \begin{cases} 1 & n = 0,\\ \beta & n > 0. \end{cases}
\]
Consequently, if $\alpha > \aleph_0$ is a regular cardinal such that $\alpha \le \beta$, then $(x_i)$ does not belong to $\qder{B}^\alpha$.
\end{lemma}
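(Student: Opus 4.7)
The strategy is to exploit the self-similar structure of the ideals $(x_i)$ in $B$ recorded in the remarks following the definition. The first bullet identifies $\operatorname{ann}_B(x_i) = J_i$ where $J_i := (x_j)_{j \in \beta \setminus \{i\}}$, so $(x_i) \cong B/J_i$; the third bullet furnishes an internal direct sum $J_i = \bigoplus_{j \neq i}(x_j)$. Together these let us compute $\textup{Tor}_*((x_i),k)$ recursively in terms of the same invariants for the sibling ideals $(x_j)$, reducing the entire lemma to an induction that feeds back into itself.

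First I would dispense with $n = 0$ by direct computation: an easy verification gives $\mf{m}_B \cdot x_i = (x_i^2)$, so $(x_i) \otimes_B k = (x_i)/\mf{m}_B(x_i)$ is one-dimensional over $k$, yielding $\textup{rank}_k \textup{Tor}_0((x_i),k) = 1$. For $n \geq 1$, tensoring the short exact sequence $0 \to J_i \to B \to (x_i) \to 0$ with $k$ and using freeness of $B$ in the long exact Tor sequence produces isomorphisms $\textup{Tor}_n((x_i),k) \cong \textup{Tor}_{n-1}(J_i,k)$. Combining this with the internal direct sum decomposition of $J_i$ and the fact that $\textup{Tor}$ commutes with direct sums in the first argument gives the recursion
\[
\textup{Tor}_n((x_i),k) \;\cong\; \bigoplus_{j \in \beta \setminus \{i\}} \textup{Tor}_{n-1}((x_j),k).
\]
An induction on $n$ then finishes the rank calculation: for $n = 1$ the right hand side is a direct sum of $\beta$ copies of $k$, of $k$-dimension $\beta$; for $n \geq 2$ each summand has $k$-dimension $\beta$ by the inductive hypothesis, so the total dimension is $\beta \cdot \beta = \beta$ since $\beta$ is infinite.

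The ``consequently'' clause is then immediate from the contrapositive of Lemma \ref{lemma:compact_implies_rank_limit}: if $\alpha > \aleph_0$ is regular with $\alpha \leq \beta$ and $(x_i)$ belonged to $\qder{B}^\alpha$, then $\textup{rank}_k \textup{Tor}_n((x_i),k)$ would be strictly less than $\alpha \leq \beta$ for all $n$, contradicting the computation above. I do not foresee any serious obstacle here; the only subtlety is keeping track of the recursive structure and distinguishing between the direct sum decomposition of submodules of $\mf{m}_B$ and the cardinal arithmetic of the resulting $k$-vector spaces.
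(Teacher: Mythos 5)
Your argument is correct and takes a genuinely different route from the paper. The paper's proof writes down the entire minimal free resolution of $(x_i)$ explicitly,
\[
\cdots \lto \bigoplus_{\substack{i_1 \in \beta\setminus\{i\}\\ i_2 \in \beta \setminus \{i_1\}}} B \lto \bigoplus_{i_1 \in \beta\setminus\{i\}} B \lto B \lto (x_i),
\]
observes by inspection that every differential is annihilated by $- \otimes_B k$, and then simply reads off the ranks of the free modules. You instead avoid building the resolution and run a dimension shift through the long exact Tor sequence for $0 \to J_i \to B \to (x_i) \to 0$, fed by the internal direct sum decomposition of $J_i$. Both proofs rest on the same two structural facts about $B$ (the annihilator of $x_i$ and the decomposition of $J_i$), so neither is more economical in hypotheses; the paper's version makes the eventual rank count completely transparent with a single picture, while yours is lighter on notation and makes the recursive self-similarity of the situation the organizing principle rather than a byproduct.

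One small point you should make explicit: your assertion that freeness of $B$ yields $\textup{Tor}_n((x_i),k) \cong \textup{Tor}_{n-1}(J_i,k)$ for \emph{all} $n \geq 1$ is slightly too quick at $n=1$. For $n \geq 2$ vanishing of $\textup{Tor}_n(B,k)$ and $\textup{Tor}_{n-1}(B,k)$ gives the isomorphism immediately, but at $n=1$ the long exact sequence gives only
\[
0 \to \textup{Tor}_1((x_i),k) \to J_i \otimes_B k \to B \otimes_B k,
\]
and one must additionally observe that the last map vanishes; this follows because $J_i \subseteq \mf{m}_B$, so the inclusion $J_i \hookrightarrow B$ becomes zero after passing to $B/\mf{m}_B = k$. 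With that remark inserted, the recursion you display holds uniformly for $n \geq 1$ and the rest of the induction, together with the deduction from Lemma~\ref{lemma:compact_implies_rank_limit}, goes through exactly as you wrote it.
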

\begin{proof}
To construct a free resolution of $(x_i)$ we begin with the epimorphism $B \xlto{x_i} (x_i)$, which has kernel $\oplus_{i_1 \in \beta \setminus \{i\}} (x_{i_1})$. One proceeds to construct the following resolution of $(x_i)$, call it $F$:
\[
\cdots \lto \bigoplus_{\substack{i_1 \in \beta\setminus\{i\}\\ i_2 \in \beta \setminus \{i_1\}\\ i_3 \in \beta \setminus \{i_2\}}}\!\!\!\! B \lto \bigoplus_{\substack{i_1 \in \beta \setminus \{i\}\\ i_2 \in \beta \setminus \{i_1\}}}\!\!\!\! B \lto \bigoplus_{i_1 \in \beta \setminus \{i\}} \!\!\! B \lto B
\]
with modules $F^{-n} = \bigoplus_{i_1 \in \beta \setminus \{i\}, \ldots, i_n \in \beta \setminus \{i_{n-1}\}} B$ for $n \ge 1$ (we set $i_0 = i$) and differentials
\[
\partial^{-n}_F = \bigoplus_{i_1 \in \beta \setminus \{i\}, \ldots, i_{n-1} \in \beta \setminus \{ i_{n-2} \}} (x_{i_n})_{i_n \in \beta \setminus \{i_{n-1}\}},
\]
where $(x_{i_n})_{i_n \in \beta \setminus \{i_{n-1}\}}$ denotes an infinite row matrix $\oplus_{i_n \in \beta \setminus \{i_{n-1}\}} B \lto B$. Notice that for $n \ge 1$ the module $F^{-n}$ is free of rank $\beta$, and the differentials in $F$ are annihilated by $- \otimes_A k$, so the vector space $\textup{Tor}_n((x_i), k) = H^{-n}( F \otimes_A k)$ has the desired rank for $n \ge 0$. It follows from Lemma \ref{lemma:compact_implies_rank_limit} that $(x_i)$ is not $\alpha$-compact in $\qder{B}$.
\end{proof}

\begin{proposition} Let $\alpha > \aleph_0$ be a regular cardinal and suppose that $\alpha \le \beta$. Then the complex
\[
T: \cdots \lto 0 \lto 0 \lto B \xlto{x_0} B \xlto{x_1} B \xlto{x_0} B \xlto{x_1} \cdots
\]
does not belong to $\qder{B}^\alpha$.
\end{proposition}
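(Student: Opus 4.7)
The plan is to argue by contradiction, combining closure properties of $\qder{B}^\alpha$ under distinguished triangles with Lemma~\ref{lemma:step_to_contradiction}. Assume $T \in \qder{B}^\alpha$. First, from the short exact sequence of complexes $0 \to T^{\ge 1} \to T \to B[0] \to 0$ we obtain a distinguished triangle $T^{\ge 1} \to T \to B[0] \to T^{\ge 1}[1]$ in $\qder{B}$; since $B$ is compact, the brutal truncation $T^{\ge 1}$ is $\alpha$-compact.

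Next I would compare $T^{\ge 1}$ to the cohomological truncation $\tau^{\ge 1}T$ via the canonical chain map which is the identity in degrees $\ge 2$ and the projection $B \twoheadrightarrow B/(x_0)$ in degree~$1$. A long-exact-sequence calculation in cohomology --- using $H^1(T^{\ge 1}) = \ker(x_1) = (x_j)_{j \neq 1}$, $H^1(\tau^{\ge 1}T) = (x_j)_{j \neq 0,1}$, and that the map is identity on $H^n$ for $n \ge 2$ --- shows that the mapping cone is quasi-isomorphic to $(x_0)$ concentrated in degree zero. Closure of $\qder{B}^\alpha$ under cofibers, together with the fact that $(x_0) \notin \qder{B}^\alpha$ by Lemma~\ref{lemma:step_to_contradiction}, forces $\tau^{\ge 1}T \notin \qder{B}^\alpha$. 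Combined with the standard $t$-structure triangle $H^0(T)[0] \to T \to \tau^{\ge 1}T$ and the two-out-of-three principle for $\alpha$-compactness, this in turn forces $H^0(T) = (x_j)_{j \neq 0}$ to not lie in $\qder{B}^\alpha$.

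To close the argument I would iterate the brutal-versus-cohomological truncation comparison at each level $n \ge 1$, obtaining analogous triangles whose cones are the ideals $(x_0)$ or $(x_1)$ up to shift, and combine them via octahedral axioms with the module-level direct-sum decomposition $H^m(T) = \bigoplus_{j}(x_j)$, using that distinct variables $x_j$ of $B$ are independent so the summands are genuine retracts. This should allow extracting a single $(x_j)$ as an $\alpha$-compact direct summand of an object built from $T$ by $\alpha$-compactness-preserving operations, contradicting Lemma~\ref{lemma:step_to_contradiction}.

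The main obstacle I foresee is this last extraction step: the triangles produced above are individually consistent with $T \in \qder{B}^\alpha$ without yet yielding a contradiction --- they only exhibit chains of objects that must or must not be $\alpha$-compact together. The decisive step is to coordinate the module-level decomposition $\bigoplus_j (x_j)$ of $H^0(T)$ with the triangulated structure to isolate a single summand $(x_i)$ as an $\alpha$-compact retract; I expect the paper to carry this out by a concrete chain-level construction exploiting the independence of the generators $x_j$ of $\mathfrak{m}_B$.
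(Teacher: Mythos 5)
Your proposal identifies the right target --- isolating a single ideal $(x_i)$ as a retract of an object manufactured from $T$, then invoking Lemma~\ref{lemma:step_to_contradiction} --- but as you yourself observe, the truncation calculations only accumulate a consistent chain of facts and never reach a contradiction. Concretely: from $T\in\qder{B}^\alpha$ you correctly deduce $T^{\ge 1}\in\qder{B}^\alpha$, then (via the cone $(x_0)$) that $\tau^{\ge 1}T\notin\qder{B}^\alpha$, then that $H^0(T) = (x_j)_{j\neq 0}\notin\qder{B}^\alpha$. None of these contradicts anything, because $H^0(T)$ is an infinite direct sum $\bigoplus_{j\neq 0}(x_j)$, and the failure of each $(x_j)$ to be $\alpha$-compact says nothing about the coproduct. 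The missing step --- coordinating the module-level splitting of cohomology with the triangulated structure to extract a genuine retract --- is precisely the content you would still need, and the iterated octahedra and brutal-vs-cohomological truncation comparisons you sketch do not obviously supply it.

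The paper takes a shortcut that dispenses with truncations altogether: the complex $T$ is \emph{formal}. Because $x_ix_j = 0$ for $i\neq j$, the inclusions $H^n(T)\hookrightarrow T^n$ of the cohomology submodules $(x_j)_{j\neq 0}\subseteq B$ (in degree $0$) and $(x_j)_{j\neq 0,1}\subseteq B$ (in degrees $n\geq 1$) assemble into a chain map from $\bigoplus_n \Sigma^{-n}H^n(T)$ (with zero differential) to $T$, and one checks directly that it is a quasi-isomorphism. Combined with the internal direct sum $(x_j)_{j\in J}\cong\bigoplus_{j\in J}(x_j)$, this exhibits any $(x_i)$ with $i>0$ as a direct summand of $T$ in $\qder{B}$, and thickness of $\qder{B}^\alpha$ finishes in one line. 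Your approach, if it could be completed, would essentially rediscover this splitting through a sequence of triangles; the explicit chain-level quasi-isomorphism is both shorter and actually closes the argument, so I would abandon the truncation route and prove formality of $T$ directly.
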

\begin{proof}
To be clear, the complex $T$ is zero in degrees $< 0$ and $B$ in degrees $\ge 0$, with differentials given by alternating products with $x_0$ and $x_1$. It is not difficult to see that there is a quasi-isomorphism
\[
(x_i)_{i > 0} \oplus \bigoplus_{k \ge 1} \Sigma^{-k} (x_i)_{i > 1} \cong T,
\]
and an isomorphism of $B$-modules $(x_i)_{i > 0} \cong \bigoplus_{i > 0} (x_i)$, so that for any $i > 0$ in $\beta$ the ideal $(x_i)$ is a direct summand of $T$ in $\qder{B}$. If $T$ belonged to $\qder{B}^\alpha$ then, since this subcategory is thick, we would have $(x_i) \in \qder{B}^\alpha$. But this possibility is excluded by Lemma \ref{lemma:step_to_contradiction}, whence $T \notin \qder{B}^\alpha$.
\end{proof}

To conclude, if $\alpha > \aleph_0$ is a regular cardinal, and we take any field $k$ and $\beta = \alpha$ in the above, we obtain a commutative local ring $B$ and complex $T$ of free $B$-modules of rank one, with $T \notin \qder{B}^\alpha$.

\section{Proper Intersection}\label{section:proper_intersection}

Throughout this appendix, $\cat{T}$ is a triangulated category. We say that a subcategory $\cat{C}$ of $\cat{T}$ is \emph{strictly full} if it is full, and whenever $X \in \cat{T}$ is isomorphic to an object of $\cat{C}$, then $X \in \cat{C}$.

\begin{definition} Let $\cat{A}, \cat{B}$ be strictly full subcategories of $\cat{T}$ closed under $\Sigma, \Sigma^{-1}$. Then $\cat{A} \star \cat{B}$ denotes the full subcategory of $\cat{T}$ given by the objects $X \in \cat{T}$ for which there exists a triangle
\[
A \lto X \lto B \lto \Sigma A
\]
with $A \in \cat{A}$ and $B \in \cat{B}$. This is a strictly full subcategory closed under $\Sigma, \Sigma^{-1}$.
\end{definition}

\begin{remark} Let $\cat{A}, \cat{B}, \cat{C}$ be strictly full subcategories closed under $\Sigma, \Sigma^{-1}$. It is a simple exercise in the octahedral axiom to see that $\cat{A} \star (\cat{B} \star \cat{C}) = (\cat{A} \star \cat{B}) \star \cat{C}$. Clearly if $\cat{A}$ is a triangulated subcategory, then $\cat{A} \star \cat{A} = \cat{A}$.
\end{remark}

\begin{definition}\label{defn:strictly_intersecting} Two triangulated subcategories $\cat{A}, \cat{B}$ of $\cat{T}$ are said to \emph{intersect properly} if there is an equality of subcategories $\cat{A} \star \cat{B} = \cat{B} \star \cat{A}$.
\end{definition}

In the next lemma we verify that this definition of proper intersection agrees with the one given by Rouquier \cite[(5.2.3)]{Rouquier08} for Bousfield subcategories.

\begin{lemma} Let $\cat{A}, \cat{B}$ be triangulated subcategories of $\cat{T}$. The following are equivalent:
\begin{itemize}
\item[(i)] $\cat{A}$ and $\cat{B}$ intersect properly, that is, $\cat{A} \star \cat{B} = \cat{B} \star \cat{A}$.
\item[(ii)] Given $A \in \cat{A}$ and $B \in \cat{B}$, every morphism $A \lto B$ and every morphism $B \lto A$ factors through an object of $\cat{A} \cap \cat{B}$.
\end{itemize}
\end{lemma}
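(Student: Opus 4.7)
The plan is to prove both implications via the octahedral axiom, invoked each time on a carefully chosen composition.

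For $(ii) \Rightarrow (i)$, by the symmetric roles of $\cat{A}$ and $\cat{B}$ it suffices to show $\cat{A} \star \cat{B} \subseteq \cat{B} \star \cat{A}$. Given $X$ sitting in a triangle $A \lto X \lto B \xlto{\delta} \Sigma A$ with $A \in \cat{A}$ and $B \in \cat{B}$, I use hypothesis $(ii)$ to factor the connecting map as $\delta = q \circ p$ with $p \colon B \to C$, $q \colon C \to \Sigma A$, and $C \in \cat{A} \cap \cat{B}$. The octahedral axiom applied to this composition produces a triangle whose outer terms are the cones of $p$ and $q$, which lie in $\cat{B}$ and $\cat{A}$ respectively, while its middle term is the cone of $\delta$, namely $\Sigma X$. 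Suitable rotation rewrites this as a triangle with $X$ as middle term and flanking objects in $\cat{B}$ and $\cat{A}$, so $X \in \cat{B} \star \cat{A}$.

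For $(i) \Rightarrow (ii)$, by symmetry it suffices to factor a given morphism $f \colon A \to B$ through an object of $\cat{A} \cap \cat{B}$. Complete $f$ to a triangle $A \xlto{f} B \xlto{g} C \xlto{h} \Sigma A$; rotation shows $C \in \cat{B} \star \cat{A}$, so by $(i)$ also $C \in \cat{A} \star \cat{B}$, yielding a triangle $A' \xlto{u} C \xlto{v} B' \xlto{w} \Sigma A'$ with $A' \in \cat{A}$ and $B' \in \cat{B}$. I then apply the octahedral axiom to $\alpha := h \circ u \colon A' \to \Sigma A$. Its third triangle $A' \xlto{\alpha} \Sigma A \lto E \lto \Sigma A'$ has both outer terms in $\cat{A}$, forcing $E \in \cat{A}$; its fourth triangle $B' \lto E \lto \Sigma B \lto \Sigma B'$ has both outer terms in $\cat{B}$, forcing $E \in \cat{B}$. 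Hence $D := \Sigma^{-1} E$ lies in $\cat{A} \cap \cat{B}$. Rotating the third and fourth triangles back furnishes morphisms $a \colon A \to D$ and $d \colon D \to B$, and the remaining task is to check that $d \circ a = \pm f$. This comes out of the commutativity relations built into the octahedron: they identify the composite $\Sigma A \to E \to \Sigma B$ with the map $\pm \Sigma f$ obtained by rotating the original triangle, and the signs then line up so that $\Sigma(d \circ a)$ equals $\pm \Sigma f$, which (after absorbing a sign into one of the two morphisms) yields the desired factorization of $f$ through $D$.

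The main obstacle is this last verification. The existence of $D \in \cat{A} \cap \cat{B}$ together with morphisms $A \to D$ and $D \to B$ is essentially formal from the octahedron, but confirming that their composition matches the prescribed $f$ depends on the commutativity of the octahedral squares, not merely on the existence of the fourth triangle. The key design choice is to apply the octahedral axiom to $\alpha = h \circ u$ rather than some other composition: precisely this choice lets the third and fourth triangles deliver the two halves of the factorization.
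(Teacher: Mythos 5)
Your proof is correct and follows essentially the same strategy as the paper. For $(ii)\Rightarrow(i)$ you factor the connecting map and apply the octahedron to that factorisation, which is precisely what the paper does. For $(i)\Rightarrow(ii)$ you also extend $f$ to a triangle, decompose its cone in $\cat{A}\star\cat{B}$, and apply the octahedron; the only difference is the choice of composable pair. The paper uses $B\to C\to B'$ (the map from $B$ into the cone, followed by the map down to the $\cat{B}$-part), whereas you use $A'\to C\to\Sigma A$ (the map from the $\cat{A}$-part into the cone, followed by the connecting map). These are the two dual choices available in the same diagram, and both succeed because the octahedral commutativity $\psi\circ\beta = h'$ (in your notation) is part of the axiom and identifies the factorisation composite with $\pm\Sigma f$ — exactly the point you correctly flag as the crux.
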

\begin{proof}
$(i) \Rightarrow (ii)$ Let $f: A \lto B$ be given, with $A \in \cat{A}$ and $B \in \cat{B}$, and extend to a triangle
\[
A \xlto{f} B \lto X \lto \Sigma A.
\]
Since $X$ belongs to $\cat{B} \star \cat{A} = \cat{A} \star \cat{B}$, there is a triangle $A' \lto X \lto B' \lto \Sigma A'$ with $A' \in \cat{A}$ and $B' \in \cat{B}$. Applying the octahedral axiom to the pair $B \lto X, X \lto B'$ we obtain an object $D$ and triangles
\begin{gather*}
B \lto B' \lto D \xlto{\gamma} \Sigma B,\\
A \xlto{\delta} \Sigma^{-1} D \lto A' \lto \Sigma A,
\end{gather*}
such that $\gamma \circ \Sigma \delta = \Sigma f$. From the first triangle we deduce that $D \in \cat{B}$, and from the second triangle we conclude that $D \in \cat{A}$, whence $f$ factors via $\cat{A} \cap \cat{B}$. The factorisation argument for a morphism $B \lto A$ is dual.
$(ii) \Rightarrow (i)$ Let $X \in \cat{A} \star \cat{B}$ be given, so that there is a triangle
\[
A \lto X \lto B \xlto{s} \Sigma A. 
\]
By hypothesis $s$ factors as $B \lto D \lto \Sigma A$ for some $D \in \cat{A} \cap \cat{B}$, and from the octahedral axiom applied to the pair of morphisms in this factorisation of $s$, we conclude that $X \in \cat{B} \star \cat{A}$. This shows that $\cat{A} \star \cat{B} \subseteq \cat{B} \star \cat{A}$, and the reverse inclusion follows similarly.
\end{proof}

\begin{lemma}\label{lemma:perp_int_triangles} Let $\cat{A}, \cat{B}$ be properly intersecting triangulated subcategories of $\cat{T}$. Then $\cat{A} \star \cat{B}$ is a triangulated subcategory of $\cat{T}$.
\end{lemma}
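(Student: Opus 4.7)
The plan is to exploit the characterisation of triangulated subcategory given in the preceding development. Since $\cat{A} \star \cat{B}$ is, by the remark immediately following the definition of $\star$, already strictly full and closed under $\Sigma$ and $\Sigma^{-1}$, the only thing remaining is to check that it is closed under extensions. Equivalently, I need to verify the inclusion
\[
(\cat{A} \star \cat{B}) \star (\cat{A} \star \cat{B}) \subseteq \cat{A} \star \cat{B},
\]
which will follow by a short formal manipulation using the associativity of $\star$ and the hypothesis of proper intersection.

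The key steps, in order, are: first, invoke associativity of $\star$ (noted in the earlier remark) to rewrite the left-hand side as $\cat{A} \star (\cat{B} \star \cat{A}) \star \cat{B}$. Second, apply proper intersection $\cat{B} \star \cat{A} = \cat{A} \star \cat{B}$ (Definition \ref{defn:strictly_intersecting}) to replace the middle factor, giving $\cat{A} \star (\cat{A} \star \cat{B}) \star \cat{B}$. Third, re-associate to get $(\cat{A} \star \cat{A}) \star (\cat{B} \star \cat{B})$. Finally, because $\cat{A}$ and $\cat{B}$ are themselves triangulated subcategories, the remark tells us that $\cat{A} \star \cat{A} = \cat{A}$ and $\cat{B} \star \cat{B} = \cat{B}$, so the composite collapses to $\cat{A} \star \cat{B}$, as required.

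There is no real obstacle; the proof is essentially a one-line string of equalities once one recognises that extension-closure is exactly the assertion $(\cat{A} \star \cat{B}) \star (\cat{A} \star \cat{B}) \subseteq \cat{A} \star \cat{B}$, and that the hypothesis of proper intersection was tailor-made so that the middle factor $\cat{B} \star \cat{A}$ can be commuted past into $\cat{A} \star \cat{B}$. The only minor subtlety worth spelling out is why the containment (rather than mere equality) suffices: given a triangle $X \lto Y \lto Z \lto \Sigma X$ with $X, Z \in \cat{A} \star \cat{B}$, the object $Y$ lies in $(\cat{A} \star \cat{B}) \star (\cat{A} \star \cat{B})$ by definition of $\star$, and so by the displayed inclusion belongs to $\cat{A} \star \cat{B}$. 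This completes the verification that $\cat{A} \star \cat{B}$ is a triangulated subcategory of $\cat{T}$.
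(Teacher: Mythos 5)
Your proof is correct, and it takes a genuinely different (and cleaner) route than the paper's. The paper proves closure under mapping cones directly: it starts from a morphism $f\colon X\to Y$ with $X,Y\in\cat{A}\star\cat{B}$, fixes decomposing triangles, applies the octahedral axiom by hand to produce an auxiliary object $D$, and only at the very end invokes the same $\star$-algebra you use (writing $D\in\cat{B}\star(\cat{A}\star\cat{B})=\cdots=\cat{A}\star\cat{B}$ and then $C\in\cat{A}\star(\cat{A}\star\cat{B})=\cat{A}\star\cat{B}$). You instead observe that, for a strictly full subcategory closed under $\Sigma^{\pm 1}$, closure under extensions is equivalent to closure under mapping cones, and that extension-closure of $\cat{A}\star\cat{B}$ is literally the inclusion $(\cat{A}\star\cat{B})\star(\cat{A}\star\cat{B})\subseteq\cat{A}\star\cat{B}$. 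Then associativity of $\star$ (already recorded in the remark), commutativity from proper intersection, and idempotence $\cat{A}\star\cat{A}=\cat{A}$, $\cat{B}\star\cat{B}=\cat{B}$ finish the job in one displayed line. This shifts the entire octahedral content into the previously established associativity, so nothing is hidden; it is simply better factored. The paper's proof essentially re-derives a fragment of associativity inside the lemma, whereas you use it as the black box it was set up to be.
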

\begin{proof}
It suffices to prove that $\cat{A} \star \cat{B}$ is closed under mapping cones. Let $f: X \lto Y$ be a morphism in $\cat{T}$ with $X,Y \in \cat{A} \star \cat{B}$, and fix triangles
\begin{gather*}
X \xlto{f} Y \lto C \lto \Sigma X,\\
A_X \lto X \lto B_X \lto \Sigma A_X,\\
A_Y \lto Y \lto B_Y \lto \Sigma A_Y,
\end{gather*}
where $A_X,A_Y \in \cat{A}$ and $B_X, B_Y \in \cat{B}$. Applying the octahedral axiom to the pair $A_Y \lto Y, Y \lto C$ yields an object $D$ and triangles
\begin{gather}
A_Y \lto C \lto D \lto \Sigma A_Y,\label{eq:perp_int_triangles_1}\\
B_Y \lto D \lto \Sigma X \lto \Sigma B_Y.\label{eq:perp_int_triangles_2}
\end{gather}
Using the proper intersection property, we infer from (\ref{eq:perp_int_triangles_2}) that
\[
D \in \cat{B} \star (\cat{A} \star \cat{B}) = \cat{B} \star (\cat{B} \star \cat{A}) = (\cat{B} \star \cat{B}) \star \cat{A} = \cat{B} \star \cat{A} = \cat{A} \star \cat{B},
\]
whence by (\ref{eq:perp_int_triangles_1}) we have
\[
C \in \cat{A} \star (\cat{A} \star \cat{B}) = (\cat{A} \star \cat{A}) \star \cat{B} = \cat{A} \star \cat{B}.
\]
Hence $\cat{A} \star \cat{B}$ is closed under mapping cones, and therefore triangulated.
\end{proof}

\begin{remark} Let $\cat{A}, \cat{B}$ be properly intersecting triangulated subcategories of $\cat{T}$. Then $\cat{A} \star \cat{B}$ is clearly the smallest triangulated subcategory of $\cat{T}$ containing $\cat{A} \cup \cat{B}$. Notice that if $\cat{A}$ and $\cat{B}$ are localising, then so is $\cat{A} \star \cat{B}$.
\end{remark}

We will need the following results from \cite{Rouquier08}.

\begin{lemma} Let $\cat{A}, \cat{B}$ be properly intersecting Bousfield subcategories of $\cat{T}$. Then $\cat{A} \cap \cat{B}$ and $\cat{A} \star \cat{B}$ are Bousfield subcategories of $\cat{T}$.
\end{lemma}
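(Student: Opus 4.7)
The plan is to construct, for an arbitrary $X \in \cat{T}$, a Bousfield coreflection triangle $I \to X \to Y \to \Sigma I$ in each case, with $I$ in the relevant subcategory and $Y$ in its right orthogonal. Triangulatedness and closure under small coproducts of $\cat{A}\star\cat{B}$ and $\cat{A}\cap\cat{B}$ are already in hand (the former via Lemma~\ref{lemma:perp_int_triangles}), so this is the only content.

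For $\cat{A}\star\cat{B}$, I would iterate the Bousfield triangles of $\cat{A}$ and $\cat{B}$: first take $A_0 \to X \to X_1 \to \Sigma A_0$ with $A_0 \in \cat{A}$ and $X_1 \in \cat{A}^{\perp}$, then $B_0 \to X_1 \to X_2 \to \Sigma B_0$ with $B_0 \in \cat{B}$ and $X_2 \in \cat{B}^{\perp}$. The octahedral axiom applied to the composite $X \to X_1 \to X_2$ produces a fibre $C$ of $X \to X_2$ fitting into a triangle $A_0 \to C \to B_0 \to \Sigma A_0$, so $C \in \cat{A}\star\cat{B}$. Since $(\cat{A}\star\cat{B})^{\perp} = \cat{A}^{\perp} \cap \cat{B}^{\perp}$, the only remaining point is that $X_2 \in \cat{A}^{\perp}$; via the triangle $B_0 \to X_1 \to X_2$ with $X_1 \in \cat{A}^{\perp}$, this reduces to showing $B_0 \in \cat{A}^{\perp}$. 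Here proper intersection enters: any morphism $\Sigma^n A' \to B_0$ with $A' \in \cat{A}$ factors through some $D \in \cat{A}\cap\cat{B}$, and the long exact sequence of $\Hom(D,-)$ on $B_0 \to X_1 \to X_2$ pinches $\Hom(D, B_0)$ between $\Hom(D, X_1) = 0$ (from $D \in \cat{A}$, $X_1 \in \cat{A}^{\perp}$) and $\Hom(D, \Sigma^{-1} X_2) = 0$ (from $D \in \cat{B}$, $X_2 \in \cat{B}^{\perp}$), forcing the factoring morphism to vanish.

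For $\cat{A}\cap\cat{B}$, the candidate coreflection of $X$ is $D := a_B(a_A X)$, where $a_A$, $a_B$ denote the right adjoints to the inclusions of $\cat{A}$, $\cat{B}$. Clearly $D \in \cat{B}$; the crux is a sublemma that $a_B$ preserves $\cat{A}$. Given $Y \in \cat{A}$, the counit $\pi: a_B Y \to Y$ is a morphism from $\cat{B}$ to $\cat{A}$, so by proper intersection factors as $a_B Y \xrightarrow{\alpha} D_0 \xrightarrow{\beta} Y$ with $D_0 \in \cat{A}\cap\cat{B}$. Since $D_0 \in \cat{B}$, universality of $a_B$ lifts $\beta$ uniquely to $\delta: D_0 \to a_B Y$ with $\pi\delta = \beta$, and the identity $\pi(\delta\alpha) = \pi$ then forces $\delta\alpha = \mathrm{id}_{a_B Y}$ by the same universality. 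Thus $a_B Y$ is a retract of $D_0 \in \cat{A}$, and since a localising subcategory in a triangulated category with coproducts is closed under retracts, $a_B Y \in \cat{A}$. Taking $Y = a_A X$ gives $D \in \cat{A}\cap\cat{B}$. Finally, the octahedral axiom on $D \to a_A X \to X$ places the cone of $D \to X$ in a triangle with the cone $\ell_B(a_A X) \in \cat{B}^{\perp}$ of $D \to a_A X$ and the cone $\ell_A X \in \cat{A}^{\perp}$ of $a_A X \to X$; since both $\cat{A}^{\perp}$ and $\cat{B}^{\perp}$ are contained in the triangulated subcategory $(\cat{A}\cap\cat{B})^{\perp}$, so is this cone.

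The main obstacle in both parts is channelling proper intersection into an orthogonality conclusion: the vanishing of $\Hom(D, B_0)$ for $\cat{A}\star\cat{B}$, and the retract property underlying the sublemma for $\cat{A}\cap\cat{B}$. In both cases proper intersection supplies only a factorization through $\cat{A}\cap\cat{B}$, and the triangulated structure together with the Bousfield universal properties must do the rest.
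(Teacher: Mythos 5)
Your argument is correct. The paper itself gives no proof, deferring to Rouquier's Lemma 5.8, and your two constructions are essentially the ones found there: for $\cat{A}\star\cat{B}$, composing the two Bousfield triangles and applying the octahedron, with proper intersection used (via the factorisation through $\cat{A}\cap\cat{B}$ and the long exact $\Hom$-sequence) to place $B_0$, and hence $X_2$, in $\cat{A}^{\perp}\cap\cat{B}^{\perp}=(\cat{A}\star\cat{B})^{\perp}$; for $\cat{A}\cap\cat{B}$, the retract lemma showing $a_B$ restricts to an endofunctor of $\cat{A}$ (one could equivalently observe that $a_A$ and $a_B$ commute up to isomorphism), followed by the octahedron on $a_Ba_AX\to a_AX\to X$. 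The one point worth being slightly more explicit about is why $\cat{A}$ is closed under retracts in the sublemma: either invoke that a localising subcategory is thick (B\"okstedt--Neeman), or note directly that a Bousfield subcategory is always thick since $a_A$ is additive and the unit of the adjunction detects membership. With that spelled out, the proof is complete and self-contained.
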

\begin{proof}
See \cite[Lemma 5.8]{Rouquier08}.
\end{proof}

\begin{lemma}\label{lemma:intersecting_bousfield} Let $\cat{F}$ be a finite family of Bousfield subcategories of $\cat{T}$, any two of which intersect properly. Given a subset $\cat{F}' \subseteq \cat{F}$, the intersection $\cap_{\cat{I} \in \cat{F}'} \cat{I}$ is a Bousfield subcategory of $\cat{T}$ intersecting properly with any subcategory in $\cat{F}$. Given $\cat{I}_1,\cat{I}_2,\cat{I} \in \cat{F}$, the quotients $\cat{I}_1/(\cat{I}_1 \cap \cat{I})$ and $\cat{I}_2/(\cat{I}_2 \cap \cat{I})$ are properly intersecting Bousfield subcategories of $\cat{T}/\cat{I}$.
\end{lemma}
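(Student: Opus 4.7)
The plan is to separate the two assertions. The intersection claim is proved by induction on $|\cat{F}'|$ using the preceding lemma; the quotient claim is proved by direct analysis of roofs in the Verdier quotient $\cat{T}/\cat{I}$.

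For the intersection assertion, I would induct on $n=|\cat{F}'|$. The base cases $n\le 2$ are immediate (for $n=2$ it is the preceding lemma). For $n>2$, write $\cat{F}'=\{\cat{J}_1,\ldots,\cat{J}_k\}$ and set $\cat{K}=\cat{J}_1\cap\cdots\cap\cat{J}_{k-1}$. By induction, $\cat{K}$ is a Bousfield subcategory of $\cat{T}$ properly intersecting every element of $\cat{F}$, in particular $\cat{J}_k$, so the preceding lemma applied to the properly intersecting pair $(\cat{K},\cat{J}_k)$ gives that $\cat{K}\cap\cat{J}_k$ is Bousfield. Proper intersection of $\cat{K}\cap\cat{J}_k$ with any $\cat{J}\in\cat{F}$ is a general consequence of the machinery of \cite[\S 5]{Rouquier08}: given the pairwise proper intersection hypothesis on $\cat{F}$, one uses the factorization criterion together with the Bousfield adjoints for the relevant subcategories and the octahedral axiom to show that intersection preserves pairwise proper intersection.

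For the quotient assertion, first note that $\cat{I}_a/(\cat{I}_a\cap\cat{I})$ is a Bousfield subcategory of $\cat{T}/\cat{I}$; this follows from proper intersection of $\cat{I}_a$ with $\cat{I}$ and is standard, cf.\ \cite[\S 5]{Rouquier08}. For the proper intersection of the two quotient subcategories, take a morphism $\bar f:\bar A_1\to\bar A_2$ in $\cat{T}/\cat{I}$ with $A_a\in\cat{I}_a$ and represent it by a roof $A_1\xleftarrow{s} X\xrightarrow{g} A_2$ with $\mathrm{cone}(s)\in\cat{I}$. The triangle $X\to A_1\to\mathrm{cone}(s)\to\Sigma X$ exhibits $X$ as an object of $\cat{I}\star\cat{I}_1$, and proper intersection of $\cat{I}_1$ with $\cat{I}$ gives $\cat{I}\star\cat{I}_1=\cat{I}_1\star\cat{I}$; hence there is a triangle $A_1'\to X\to I'\to\Sigma A_1'$ with $A_1'\in\cat{I}_1$ and $I'\in\cat{I}$. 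By the octahedral axiom applied to the composite $A_1'\to X\xrightarrow{s} A_1$, its cone $C$ fits into a triangle $I'\to C\to\mathrm{cone}(s)\to\Sigma I'$ and therefore lies in $\cat{I}$. Consequently the roof $A_1\xleftarrow{s'} A_1'\xrightarrow{g'} A_2$, where $s'$ and $g'$ are the obvious composites through $X$, represents the same morphism $\bar f$, and now $g'$ is a morphism in $\cat{T}$ from an object of $\cat{I}_1$ to an object of $\cat{I}_2$. By proper intersection of $\cat{I}_1$ and $\cat{I}_2$ in $\cat{T}$, $g'$ factors as $A_1'\to D\to A_2$ with $D\in\cat{I}_1\cap\cat{I}_2$. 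The image of $D$ in $\cat{T}/\cat{I}$ lies in both $\cat{I}_1/(\cat{I}_1\cap\cat{I})$ and $\cat{I}_2/(\cat{I}_2\cap\cat{I})$, yielding the required factorization of $\bar f$; the opposite direction $\bar A_2\to\bar A_1$ is symmetric.

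The main obstacle is the roof manipulation in the preceding paragraph: verifying that the replacement of the roof $A_1\xleftarrow{s} X\xrightarrow{g} A_2$ by $A_1\xleftarrow{s'} A_1'\xrightarrow{g'} A_2$ is an equivalence of roofs, i.e.\ represents the same morphism $\bar f$ in $\cat{T}/\cat{I}$. The proper intersection hypothesis on $(\cat{I}_1,\cat{I})$ is exactly what enables the reordering $\cat{I}\star\cat{I}_1=\cat{I}_1\star\cat{I}$, and the octahedral calculation then keeps the cone of $s'$ inside $\cat{I}$; once this substitution is justified, the factorization of $\bar f$ reduces mechanically to applying proper intersection of $\cat{I}_1$ and $\cat{I}_2$ to the morphism $g'$ in $\cat{T}$.
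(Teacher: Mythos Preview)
The paper does not actually prove this lemma: its entire proof is the sentence ``See \cite[Lemma 5.9]{Rouquier08}.'' So there is no argument to compare against, only the cited source. Your proposal goes further than the paper by sketching an argument, and the quotient half is handled correctly: the roof replacement $A_1\xleftarrow{s}X$ by $A_1\xleftarrow{s'}A_1'$ with $A_1'\in\cat{I}_1$ is exactly the right move, the octahedral computation showing $\mathrm{cone}(s')\in\cat{I}$ is clean, and the final factorization through $\cat{I}_1\cap\cat{I}_2$ is immediate. This is essentially how Rouquier argues.

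The intersection half, however, has a genuine gap. Your induction reduces everything to the claim that if $\cat{K}$ and $\cat{J}_k$ each properly intersect $\cat{J}$, then $\cat{K}\cap\cat{J}_k$ properly intersects $\cat{J}$; but you do not prove this, you defer it to ``the machinery of \cite[\S 5]{Rouquier08}.'' That claim is precisely the non-formal content of the lemma and is not an automatic consequence of the factorisation criterion plus one octahedron: given $A\in\cat{K}\cap\cat{J}_k$ and $B\in\cat{J}$, a map $A\to B$ factors through some $D\in\cat{K}\cap\cat{J}$ by proper intersection of $\cat{K}$ with $\cat{J}$, but there is no reason for $D$ to lie in $\cat{J}_k$, and one must use the Bousfield adjoints to replace $D$ by something in the triple intersection. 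Since you end up citing Rouquier for this step anyway, your inductive scaffolding adds nothing over the paper's bare citation; either carry out the three-subcategory step explicitly (this is Rouquier's Lemma 5.7), or simply cite \cite[Lemma 5.9]{Rouquier08} as the paper does.
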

\begin{proof}
See \cite[Lemma 5.9]{Rouquier08}.
\end{proof}

\bibliographystyle{amsalpha}

\end{document}